\newtheorem{thm}{Theorem}[section]
\newtheorem{lem}[thm]{Lemma}
\newtheorem{prop}[thm]{Proposition}
\newtheorem{cor}[thm]{Corollary}
\newtheorem{add}[thm]{Addendum}
\theoremstyle{definition}
\newtheorem{rem}[thm]{Remark}
\newtheorem{defn}[thm]{Definition}
\newtheorem{ex}[thm]{Example}
\DeclareMathOperator{\Dhocolim}{Dhocolim}
\DeclareMathOperator{\hocolim}{hocolim}
\DeclareMathOperator{\hocolimiz}{hocolim^{iz}\!}
\DeclareMathOperator{\id}{id}
\DeclareMathOperator{\iz}{iz}
\DeclareMathOperator{\Spt}{Spt}
\DeclareMathOperator{\perm}{Perm}
\DeclareMathOperator{\strict}{Strict}
\DeclareMathOperator{\permnz}{Perm_{nz}}
\DeclareMathOperator{\strictnz}{Strict_{nz}}
\DeclareMathOperator{\permiz}{Perm_{iz}}
\DeclareMathOperator{\strictiz}{Strict_{iz}}
\DeclareMathOperator{\cat}{Cat}
\newcommand{\A}{\mathcal A}
\newcommand{\bfk}{\mathbf{k}}
\newcommand{\bfm}{\mathbf{m}}
\newcommand{\bfn}{\mathbf{n}}
\newcommand{\bfl}{\mathbf{l}}
\newcommand{\C}{\mathcal C}
\newcommand{\D}{\mathcal D}
\newcommand{\E}{\mathcal E}
\newcommand{\ew}{\overset{\sim}{\longleftarrow}}
\newcommand{\F}{\mathcal F}
\renewcommand{\geq}{\geqslant}
\newcommand{\ie}{\emph{i.e.}}
\renewcommand{\leq}{\leqslant}
\newcommand{\lla}{\longleftarrow}
\newcommand{\lra}{\longrightarrow}
\newcommand{\M}{\mathcal M}
\newcommand{\N}{\mathcal N}
\renewcommand{\P}{\mathcal P}
\newcommand{\Q}{\mathcal Q}
\newcommand{\R}{\mathcal R}
\newcommand{\vphi}{\varphi}
\newcommand{\vrho}{\varrho}
\newcommand{\we}{\overset{\sim}{\longrightarrow}}
\newcommand{\Z}{\mathbb{Z}}
\renewcommand{\:}{\colon}
\def\dl{d_{\ell}}
\def\dr{d_{r}}
\def\rightdistributivity{right distributivity\xspace}
\def\Rightdistributivity{Right distributivity\xspace}
\def\leftdistributivity{left distributivity\xspace}
\newcommand{\bc}{\text{rig category}\xspace}
\newcommand{\bcs}{\text{rig categories}\xspace}
\newcommand{\bp}{\text{bipermutative category}\xspace}
\newcommand{\bps}{\text{bipermutative categories}\xspace}
\newcommand{\sbc}{strictly bimonoidal category\xspace}
\newcommand{\sbcs}{\text{strictly bimonoidal categories}\xspace}
\begin{document}
\title[Ring completion of rig categories]{
Ring completion of rig categories}
\author{Nils A. Baas, Bj{\o}rn Ian Dundas, Birgit Richter and John Rognes}
\address{Department of Mathematical Sciences, NTNU,
  7491 Trondheim, Norway}
\email{baas@math.ntnu.no}
\address{Department of Mathematics, University of Bergen,
  5008 Bergen, Norway}
\email{dundas@math.uib.no}
\address{Department Mathematik der Universit{\"a}t Hamburg,
  20146 Hamburg, Germany}
\email{richter@math.uni-hamburg.de}
\address{Department of Mathematics, University of Oslo,
  0316 Oslo, Norway}
\email{rognes@math.uio.no}
\date{\today}
\thanks{
The first author would like to thank the Institute for Advanced Study,
Princeton, for its hospitality and support during his stay in the
spring of 2007.
Part of the work was done while the second author was on sabbatical at
Stanford University, whose hospitality and stimulating environment is
gratefully acknowledged.
The third author thanks 
the SFB 676 for support and the topology group
in Sheffield for stimulating discussions.}
\hyphenation{ca-te-go-ries ca-te-go-ry topo-logy}

\keywords{Algebraic $K$-theory, bimonoidal category, bipermutative
category}
\subjclass[2000]{Primary 19D23, 55R65; Secondary 19L41, 18D10}
\begin{abstract}
We offer a solution to the long-standing problem of group completing
within the context of rig categories (also known as bimonoidal
categories).  Given a rig category~$\R$ we construct a natural additive
group completion $\bar\R$ that retains the multiplicative structure,
hence has become a ring category.  If we start with a commutative
rig category~$\R$ (also known as a symmetric bimonoidal category) the
additive group completion $\bar\R$ will be a commutative ring category.
In an accompanying paper \cite{BDRR2} we show how to use this construction
to prove the conjecture from \cite{BDR} that the algebraic $K$-theory of
the connective topological $K$-theory ring spectrum~$ku$ is equivalent
to the algebraic $K$-theory of the rig category $\mathcal{V}$ of complex
vector spaces.
\end{abstract}

\maketitle

\section{Introduction and main result}

Multiplicative structure in algebraic $K$-theory is a delicate matter.
In 1980 Thomason \cite{Th2} demonstrated that, after additive group
completion, the most obvious approaches to multiplicative pairings
cease to make sense.  For instance, let us write $(-\M)\M$ for the
Grayson--Quillen \cite{GQ} model for the algebraic $K$-theory of a
symmetric monoidal category $\M$, written additively.  An object
in $(-\M)\M$ is a pair $(a,b)$ of objects of $\M$, thought of as
representing the difference ``$a-b$''.  The na{\"\i}ve guess for how
to multiply elements is then dictated by the rule that $(a-b)(c-d) =
(ac+bd)-(ad+bc)$.  This, however, does not lead to a decent multiplicative
structure: the resulting product is in most situations not functorial.

Several ways around this problem have been developed, but they all
involve first passing to spectra or infinite loop spaces by one of the
equivalent group completion machines, for instance the functor $\Spt$ from
symmetric monoidal categories to spectra defined in \cite{Th3}*{Appendix}.
The original problem has remained unanswered: can one additively group
complete and simultaneously keep the multiplicative structure, within
the context of symmetric monoidal categories?

We answer this question affirmatively.  Our motivation came from
an outline of proof in \cite{BDR} of the conjecture that $2$-vector
bundles give rise to a geometric cohomology theory of the same sort as
elliptic cohomology, or more precisely, to the algebraic $K$-theory of
connective topological $K$-theory, which by work of Ausoni and the fourth
author \cite{A}, \cite{AR} is a spectrum of telescopic complexity~$2$.
The solution of the ring completion problem given here enters
as a step in our proof in \cite{BDRR2} of that conjecture.  For this
application the alternatives provided in spectra were insufficient.

Before stating our main result, let us fix some terminology.

\begin{defn} \label{defn:unstable_stable_eq}
Let $|\C|$ denote the classifying space of a small category $\C$,
that is, the geometric realization of its nerve $N\C$.
A functor $F \: \C \to \D$ will be called an {\em unstable equivalence}
if it induces a homotopy equivalence of classifying spaces $|F| \: |\C|
\to |\D|$, and will usually be denoted $\C \we \D$.  A lax symmetric
monoidal functor $F \: \M \to \N$ of symmetric monoidal categories,
with or without zeros, is a \emph{stable equivalence} if it induces a
stable equivalence of spectra $\Spt F \: \Spt \M \to \Spt \N$.  Any lax
symmetric monoidal functor whose underlying functor is an unstable
equivalence is a stable equivalence.
\end{defn}

Unstable equivalences are often called homotopy equivalences, or weak
equivalences.  We use ``un\-stable'' to emphasize the contrast with stable
equivalences.  These definitions readily extend to simplicial categories
and functors between them.

By a rig (resp.~commutative rig) we mean a ring (resp.~commutative ring)
in the algebraic sense, except that negative elements are not assumed to
exist.  By a \emph{rig category} (resp.~\emph{commutative rig category}),
also known as a bimonoidal category (resp.~symmetric bimonoidal
category), we mean a category $\R$ with two binary operations
$\oplus$ and $\otimes$, satisfying the axioms of a rig (resp.~commutative
rig) up to coherent natural isomorphisms.  By a \emph{(simplicial)
ring category} we mean a (simplicial) rig category $\bar\R$ such that
$\pi_0 |\bar\R|$ is a ring in the usual sense, with additive inverses.
By a \emph{bipermutative category} (resp.~a \emph{strictly bimonoidal
category}) we mean a commutative \bc (resp.~a \bc) where as many of
the coherence isomorphisms as one can reasonably demand are identities.
See Definitions~\ref{def:igradedbim} and~\ref{def:igradedbc} below for
precise lists of axioms.

\begin{thm} \label{thm:main}
Let $(\R, \oplus, 0_\R, \otimes, 1_\R)$ be a small simplicial rig
category.  There are natural morphisms
$$
\R \ew Z\R \lra \bar\R
$$
of simplicial rig categories, such that
\begin{enumerate}
\item
$\bar\R$ is a simplicial ring category,
\item
$\R \ew Z\R$ is an unstable equivalence, and
\item
$Z\R \lra \bar\R$ is a stable equivalence.
\item
If furthermore (a) $\R$ is a groupoid, and (b) for every object $X$
in $\R$ the translation functor $X \oplus (-)$ is faithful, then there
is a natural chain of unstable equivalences of $Z\R$-modules connecting
$\bar\R$ to the Grayson--Quillen model $(-\R)\R$ for the additive group
completion of $\R$.
\end{enumerate}
\end{thm}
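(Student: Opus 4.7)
The plan is to first rigidify the given simplicial rig category $\R$ to a strictly bimonoidal object $Z\R$ via an unstable equivalence, and then carry out an additive group completion of $Z\R$ on the strict side so that the resulting multiplication is functorial on the nose. The na\"ive Grayson--Quillen pair construction $(-\R)\R$ fails for rig categories precisely because distributivity holds only up to coherent isomorphism; passing through a strict model $Z\R$ is the natural way to dissolve this obstruction.

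For the construction $\R \ew Z\R$ I would use a Thomason--Laplaza style strictification, obtaining $Z\R$ as a homotopy colimit, in the $2$-category of simplicial rig categories, of a diagram of strictly bimonoidal categories indexed by a contractible shape, together with a canonical coherence functor down to $\R$. This gives (a) strict bimonoidal structure on $Z\R$ inherited from the pieces and the strictly bimonoidal glueing, (b) an unstable equivalence $Z\R \we \R$ by contractibility of the indexing shape, and (c) naturality in~$\R$. The point of this step is that the map is a map of simplicial rig categories, so any construction performed on $Z\R$ can be read back as a construction compatible with $\R$.

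With $Z\R$ in hand, I would define $\bar\R$ by mimicking the Grayson--Quillen pair construction on the strict model. Objects are pairs $(a,b)$ of objects of $Z\R$, interpreted as formal differences; addition is componentwise, and strict distributivity in $Z\R$ makes the formula $(a,b) \otimes (c,d) = (a \otimes c \oplus b \otimes d,\ a \otimes d \oplus b \otimes c)$ strictly associative and strictly distributive. Morphisms must be rigged so that pairs of the form $(a,a)$ become null and the map $Z\R \lra \bar\R$ sending $a$ to $(a,0)$ is a stable equivalence; I would achieve this by a simplicial bar/telescope construction that forces cofinal stabilisation along the additive translation action of $Z\R$ on itself. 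Conditions (1)--(3) then follow: (1) because the additive group completion inverts $\pi_0$, (2) because $\R \ew Z\R$ was constructed as such, and (3) by the Segal--Thomason group completion theorem applied to the underlying additive symmetric monoidal structure of $Z\R$, combined with the observation that the multiplicative pairing is already strictly bilinear.

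The main obstacle is the simultaneous demand that the morphisms of the pair category implement both the equivalence relation needed to invert addition and the strict functoriality of $\otimes$: these pull in opposite directions, because any translation used to invert an object additively must commute on the nose with the multiplicative structure, or one is back to the rig-category incoherence that broke $(-\R)\R$ in the first place. Working over the strictified $Z\R$ is exactly what resolves this tension, and in my view this is where the real work of the proof lives. For statement~(4), under hypotheses~(a) and~(b) the classical Grayson--Quillen argument identifies $(-\R)\R$ as an unstable additive group completion of $\R$; by naturality of the pair construction along the unstable equivalence $Z\R \we \R$, both $\bar\R$ and $(-\R)\R$ receive compatible $Z\R$-equivariant maps from a common bar-type replacement, and each of these maps is an unstable equivalence by the faithfulness hypothesis, yielding the required chain of $Z\R$-equivariant unstable equivalences.
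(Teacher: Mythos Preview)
Your proposal has a genuine gap: the pair-product formula $(a,b)\otimes(c,d)=(a\otimes c\oplus b\otimes d,\ a\otimes d\oplus b\otimes c)$ is \emph{not} rescued by strictifying distributivity. Compute $((a,b)\otimes(c,d))\otimes(e,f)$ versus $(a,b)\otimes((c,d)\otimes(e,f))$: the first component of the left side is $ace\oplus bde\oplus adf\oplus bcf$, while on the right it is $ace\oplus adf\oplus bcf\oplus bde$. These agree only after applying the additive symmetry $\gamma_\oplus$, which is \emph{never} the identity in a permutative category with more than one object. So your $\otimes$ is not strictly associative, and the same reordering obstruction destroys functoriality on morphisms; this is precisely the ``phony multiplication'' phenomenon of \cite{Th2}, and it persists in any strictly bimonoidal replacement. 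The obstruction lives in $\gamma_\oplus$, not in the distributors, so no amount of Laplaza-style strictification removes it.

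The paper's solution is orthogonal to yours. It does \emph{not} try to collapse the four terms back into a pair; instead it keeps them spread over a $2$-cube, and more generally builds an $I\smallint\Q$-graded strictly bimonoidal category $G\R$ whose value at level $n$ records $2^n$-tuples indexed by the vertices of an $n$-cube. The graded multiplication $G\R(x)\times G\R(y)\to G\R(x+y)$ simply juxtaposes cubes, so no additive reordering is ever needed. The ring category $\bar\R$ is then the (derived) Thomason homotopy colimit $\Dhocolim_{I\smallint\Q}G\R$, and group completeness comes from the $\Q\mathbf1$-direction of the diagram rather than from a bar/telescope on pairs. You have also misread $Z\R$: in the paper it is not a strictification but the simplicial object $[q]\mapsto(LR)^{q+1}\R$ that repeatedly adjoins an isolated zero, used solely to make Thomason's homotopy colimit homotopy invariant in the presence of zero objects. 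The strictification step you have in mind is performed up front (replacing the rig category by an equivalent strictly bimonoidal one) and is comparatively routine; the substantive work is the cubical construction $G\R$ and the verification (Lemma~\ref{lem:zeroless}) that $\hocolim_J$ of a $J$-graded bipermutative category is again bipermutative.
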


\begin{add}
\label{thm:mainadd}
Let $\R$ be a small simplicial commutative rig category.  There are
natural morphisms
$$
\R \ew Z\R \lra \bar\R
$$
of simplicial commutative rig categories, such that all four statements
of the theorem above hold.
\end{add}

In particular, the induced maps $\Spt \R \gets \Spt Z\R \to \Spt \bar\R$
are stable equivalences of ring spectra, but the point is that $\bar\R$
is ring complete, before passing to spectra.  Here are some 
examples of rig categories that can be ring completed by this method.

\begin{itemize}
\item
If $R$ is a rig, then the discrete category $\R$ with the elements of
$R$ as objects, and only identity morphisms, is a small rig category.
When $R$ is commutative, so is $\R$.  The spectrum $\Spt \R$ is the
Eilenberg--Mac\,Lane spectrum of the algebraic ring completion of $R$.
\item
There is a small commutative \bc $\E$ of finite sets, with objects
the finite sets $\bfn = \{1,\ldots,n\}$ for $n\geq0$.  In particular
$\mathbf0$ is the empty set.  There are no other morphisms in $\E$ than
the automorphisms, and the automorphism group of $\bfn$ is the symmetric
group $\Sigma_n$.  Disjoint union and cartesian product of sets induce
the operations $\oplus$ and $\otimes$, and $\Spt \E$ is equivalent to
the sphere spectrum.
\item
For each commutative ring $A$ there is a small commutative \bc $\F(A)$ of
finitely generated free $A$-modules.  The objects of $\F(A)$ are the free
$A$-modules $A^n = \bigoplus_{i=1}^n A$ for $n\geq0$.  There are no other
morphisms in $\F(A)$ than the automorphisms, and the automorphism group
of $A^n$ is the general linear group $GL_n(A)$.  Direct sum and tensor
product of $A$-modules induce the operations $\oplus$ and $\otimes$, and
$\Spt \F(A)$ is the (free) algebraic $K$-theory spectrum of the ring $A$.
\item
Let $\mathcal{V}$ be the topological commutative \bc of complex
(Hermitian) vector spaces.  It has one object $\mathbb{C}^n$ for each
$n\geq0$, with automorphism space equal to the unitary group $U(n)$.
There are no other morphisms.  The spectrum $\Spt \mathcal{V}$ is a
model for the connective topological $K$-theory spectrum $ku$.  The case
relevant to \cite{BDR} and \cite{BDRR2} is the $2$-category of $2$-vector
spaces of Kapranov and Voevodsky \cite{KV}, viewed as finitely generated
free $\mathcal{V}$-modules.  We can functorially convert $\mathcal{V}$
to a simplicial commutative rig category by replacing each morphism
space with its singular simplicial set.
\end{itemize}

\subsection{Outline of proof}
The problem should be approached with some trepidation, since the
reasons for the failure of the obvious attempts at a solution to this
long-standing problem in algebraic $K$-theory are fairly well hidden.
The standard approaches to additive group completion yield models that
are symmetric monoidal categories with respect to an additive structure,
but which have no meaningful multiplicative structure \cite{Th2}.
The failure comes about essentially because commutativity for addition
only holds up to isomorphism.  We therefore need to make a model that
provides enough room to circumvent this difficulty.

Our solution comes in the form of a graded construction, $G\R$, related
to iterations of the Grayson--Quillen model.  It is a $J$-shaped
diagram of symmetric monoidal categories, where the indexing category
$J=I\smallint\Q$ is a certain permutative category over the category $I$
of finite sets $\bfn = \{1, \dots, n\}$ and injective functions.
Its definition can be motivated in a few steps.  First, we use Thomason's
homotopy colimit \cite{Th3} of the diagram
$$
0 \lla \R \overset{\Delta}\lra \R \times \R
$$
in symmetric monoidal categories as a model for the additive group
completion of $\R$.  An object $(a,b)$ in the right hand category
$\R \times \R$ represents the difference $a-b$, while an object $a$
in the middle category~$\R$ represents the relation $a-a=0$, since $a$
maps to $(a,a)$ on the right hand side, and to zero in the
left hand category.

Group completion is a homotopy idempotent process, meaning that we
may repeat it any positive number of times and always obtain unstably
equivalent results.  For each $n\geq0$ we realize the $n$-fold iterated
group completion of $\R$ as the homotopy colimit of a $\Q\bfn$-shaped
diagram in symmetric monoidal categories, where $\Q\mathbf1$ is the
three-object category indexing the diagram displayed above, and in general
$\Q\bfn$ is isomorphic to the product of $n$ copies of $\Q\mathbf1$.
One distinguished entry in the $\Q\bfn$-shaped diagram is the product of
$2^n$ copies of $\R$.  Its objects are given by $2^n$ objects of~$\R$,
which we regard as being located at the corners of an $n$-dimensional
cube.  These represent an alternating sum of terms in~$\bar\R$, with
signs determined by the position in the $n$-cube.  The other entries
in the $\Q\bfn$-shaped diagram are diagonally embedded subcubes of the
$n$-cube, or the zero category, and encode cancellation laws in the
group completion.

As regards the multiplicative structure, there is a natural pairing
from the $n$-fold and the $m$-fold group completion
to the $(n+m)$-fold group completion, with all possible
$\otimes$-products of the entries in the two original cubes being spread
out over the bigger cube.  For instance, the product of the two $1$-cubes
$(a, b)$ and $(c, d)$ is a $2$-cube $\left(\smallmatrix a c & a d \\
b c & b d \endsmallmatrix\right)$, where for brevity we write $ac$
for $a \otimes c$, and so on.  Rather than trying to turn any single
$n$-fold group completion into a ring category, we instead
pass to the homotopy colimit over of all of them.  To allow
the homotopy colimit to retain the multiplicative structure, we proceed
as in \cite{Bo} and index the iterated group completions by
the permutative category $I$, instead of the (non-symmetric) monoidal
category of finite sets and inclusions that indexes sequential colimits.
For each morphism $\bfm \to \bfn$ in~$I$ there is a preferred functor from
$\Q\bfm$-shaped to $\Q\bfn$-shaped diagrams, involving extension by zero.
For instance, the unique morphism $\mathbf0 \to \mathbf1$ takes $a$
in~$\R$ (for $m=0$) to $(a, 0)$ in~$\R \times \R$ in the display above
(for $n=1$).  See section~\ref{sec:gq} for further examples and pictures
in low dimensions.

The resulting homotopy colimit, modulo a technical point about zero
objects, gives the desired ring category $\bar\R$.  As described, this
is the homotopy colimit of an $I$-shaped diagram, whose entry at $\bfn$
is the homotopy colimit of a $\Q\bfn$-shaped diagram, for each $n\geq0$.
Such a double homotopy colimit can be condensed into a single homotopy
colimit over a larger category, namely the Grothendieck construction
$J = I \smallint \Q$.  In the end we therefore prefer to present the
ring category $\bar\R$ as the one-step homotopy colimit of a $J$-shaped
diagram $G\R$.  The graded multiplication
$$
G\R(x)\times G\R(y)\to G\R(x+y)
$$
for $x, y$ in~$J$ is defined as above, by multiplying two cubes
together to get a bigger cube, and makes $G\R$ a \emph{$J$-graded
rig category}.  The difficulty one usually encounters does not appear,
essentially because we have spread the product terms out over the vertices
of the cubes, and not attempted to add together the ``positive'' and
``negative'' entries in some order or another.

From a homotopy theoretic point of view, the crucial information lies
in the fact that for each $n\geq0$, the homotopy colimit of the
spectra associated to the $\Q\bfn$-shaped part of the $G\R$-diagram is
stably equivalent to the spectrum associated with $\R$.  For instance,
the homotopy colimit of the diagram
$$
{*} = \Spt 0 \lla \Spt\R \overset{\Delta}\lra \Spt(\R\times\R)
$$
(for $n=1$) is the ``mapping cone of the diagonal'', hence is again a
model for the spectrum associated with~$\R$.  From a categorical point of
view, the possibility to interchange the factors in~$\R\times\R$ gives
that the passage to spectra is unnecessary, since this flip induces the
desired ``negative path components'', without having to stabilize.

We use Thomason's homotopy colimit in symmetric monoidal categories to
transform the $J$-graded rig category $G\R$ into the rig category $\bar\R$,
see Proposition~\ref{prop:GQisIgrded} and Lemma~\ref{lem:zeroless}.
The technical point alluded to above is that zero objects are troublesome
(few symmetric monoidal categories are ``well pointed''), and must be
handled with care.  This gives rise to the intermediate simplicial rig
category $Z\R$ that appears in Theorem~\ref{thm:main}.

\subsection{Plan}
The structure of the paper is as follows.  After replacing the starting
commutative rig (resp.~rig) category $\R$ by an equivalent bipermutative
(resp.~strictly bimonoidal) category, we discuss graded versions of
bipermutative and strictly bimonoidal categories and their morphisms
in section~\ref{sec:bip}.  In section~\ref{sec:gq} we introduce the
construction $G\R$ mentioned above, and show that it is a $J$-graded
bipermutative (resp.~strictly bimonoidal) category.

Thomason's homotopy colimit of symmetric monoidal categories is defined in
a non-unital (or zeroless) setting.  We extend this to the unital setting
by constructing a derived version of it in section~\ref{sec:hocolim}, and
in section~\ref{sec:hocolimgradedbps} we show that the homotopy colimit
of a graded bipermutative (resp.~graded strictly bimonoidal) category
is almost a bipermutative (resp.~strictly bimonoidal) category---it
only lacks a zero.  Section~\ref{sec:zeros} describes how the results
obtained so far combine to lead to an additive group completion within
the framework of (symmetric) bimonoidal categories.  This ring completion
construction is given in Theorem~\ref{thm:gcp}.

Most of this paper appeared earlier as part of a preprint \cite{BDRR} with
the title ``Two-vector bundles define a form of elliptic cohomology''.
Some readers thought that title was hiding the result on rig categories
explained in the current paper.  We therefore now offer the ring
completion result separately, and ask those readers interested in
our main application to also turn to \cite{BDRR2}.
One should note that there was a mathematical error in
the earlier preprint: the map~$T$ in the purported proof of Lemma~3.7(2) is not
well defined, and so the version of the iterated Grayson--Quillen model
used there might not have the right homotopy type.

A piece of notation: if $\C$ is any small category, then the expression
$X \in \C$ is short for ``$X$ is an object of $\C$'' and likewise for
morphisms and diagrams.  Displayed diagrams commute unless the contrary
is stated explicitly.

\section{Graded bipermutative categories} \label{sec:bip}

\subsection{Permutative categories}
A monoidal category (resp.~symmetric monoidal category) is a category~$\M$
with a binary operation $\oplus$ satisfying the axioms of a monoid
(resp.~commutative monoid), \ie, a group (resp.~abelian group) without
negatives, up to coherent natural isomorphisms.  A permutative category
is a symmetric monoidal category where the associativity and the left
and right unitality isomorphisms (but usually not the commutativity
isomorphism), are identities.  For the explicit definition of a
permutative category see for instance \cite{EM}*{3.1} or \cite{M1}*{\S 4}; compare also \cite{ML}*{XI.1}.  Since our permutative categories
are typically going to be the underlying additive symmetric monoidal
categories of categories with some further multiplicative structure,
we call the neutral element ``zero'', or simply $0$.

We consider two kinds of functors between permutative categories $(\M,
\oplus, 0_\M, \tau_\M)$ and $(\N, \oplus, 0_\N, \tau_\N)$, namely lax
and strict symmetric monoidal functors.  A \emph{lax symmetric monoidal
functor} is a functor $F$ in the sense of \cite{ML}*{XI.2}, \ie, there
are morphisms
$$
f(a,b) \: F(a) \oplus F(b) \rightarrow F(a \oplus b)
$$
for all objects $a,b \in \M$, which are natural in $a$ and $b$, there
is a morphism
$$
n\: 0_\N \rightarrow F(0_\M),
$$
and these structure maps fulfill the coherence conditions that are spelled
out in \cite{ML}*{XI.2}; in particular
$$
\xymatrix{
{F(a) \oplus F(b)} \ar[r]^-{f(a,b)} \ar[d]_{\tau_\N(F(a),F(b))} &
{F(a \oplus b)} \ar[d]^{F(\tau_\M(a,b))} \\
{F(b) \oplus F(a)} \ar[r]^-{f(b,a)} &
{F(b \oplus a)}}
$$
commutes for all $a,b \in \M$.
Let $\perm$ be the category of small permutative categories and
lax symmetric monoidal functors.

We might say that $f$ is a binatural transformation, \ie, a natural
transformation of functors $\M \times \M \to \N$.  Here ``bi-'' refers
to the two variables, and should not be confused with the ``bi-'' in
bipermutative, which refers to the two operations $\oplus$ and
$\otimes$.

A \emph{strict symmetric monoidal functor} has furthermore to satisfy that
the morphisms $f(a,b)$ and $n$ are identities, so that
$$
F(a \oplus b) = F(a) \oplus F(b) \quad \text{and} \quad F(0_\M) =
0_\N
$$
\cite{ML}*{XI.2}.  We denote the category of
small permutative categories and strict symmetric monoidal functors by
$\strict$.

A natural transformation $\nu \: F \Rightarrow G$ of lax symmetric
monoidal functors, with components $\nu_a \: F(a) \to G(a)$, is required
to be compatible with the structure morphisms, so that $\nu_{a \oplus
b} \circ f(a,b) = g(a,b) \circ (\nu_a \oplus \nu_b)$ and $\nu_{0_\M}
\circ n = n$.  Similarly for natural transformations of strict symmetric
monoidal functors.

Since any symmetric monoidal category is naturally equivalent
to a permutative category, we lose no generality by only
considering permutative categories.  We mostly consider the unital
situation, except for the places in sections~\ref{sec:hocolim}
and~\ref{sec:hocolimgradedbps} where we explicitly state to be in the
zeroless situation.

\subsection{Bipermutative categories}
Roughly speaking, a \bc $\R$ consists of a symmetric monoidal category
$(\R,\oplus,0_\R, \tau_\R)$ together with a functor $\R\times\R\to\R$
called ``multiplication'' and denoted by $\otimes$ or
juxtaposition.  Note that the multiplication is not a map of monoidal
categories.  The multiplication has a unit $1_\R \in\R$, multiplying by
$0_\R$ is the zero map, multiplying by $1_\R$ is the identity map, and
the multiplication is (left and right) distributive over $\oplus$ up to
appropriately coherent natural isomorphisms.  If we pose the additional
requirement that our \bcs are commutative, then this coincides with what
is often called a symmetric bimonoidal category.  Laplaza spelled out
the coherence conditions in \cite{L}*{pp.~31--35}.

According to \cite{M2}*{VI, Proposition~3.5} any commutative \bc is
naturally equivalent in the appropriate sense to a \bp, and a similar
rigidification result holds for rig categories.  Our main theorem
(resp.~its addendum) is therefore equivalent to the corresponding
statement where we assume that~$\R$ is a \sbc (resp.~a \bp).  We will
focus on the bipermutative case in the course of this paper, and indicate
what has to be adjusted in the strictly bimonoidal case.

The reader can recover the axioms for a \bp from Definition
\ref{def:igradedbim} below as the special case of a ``$0$-graded \bp'',
where $0$ is the one-morphism category.  Otherwise one may for instance
consult \cite{EM}*{3.6}.  One word of warning: Elmendorf and Mandell's
\leftdistributivity law is precisely what we (and \cite{M2}*{VI, Definition~3.3}) call the \rightdistributivity law.  Note that we
demand strict \rightdistributivity, and that this implies both cases of
condition~3.3(b) in \cite{EM}, in view of condition~3.3(c).

If $\R$ is a small \bc such that $\pi_0 |\R|$ is a ring (has additive
inverses), then we call $\R$ a \emph{ring category}.  Elmendorf and
Mandell's ring categories are not ring categories in our sense, but
non-commutative \bcs.  In the course of this paper we have to resolve \bcs
simplicially.  If~$\R$ is a small simplicial \bc such that $\pi_0 |\R|$ is
a ring, then we call $\R$ a \emph{simplicial ring category} (even though
it is usually not a simplicial object in the category of ring categories).

If $\R$ is a \sbc, a \emph{left $\R$-module} is a permutative category
$\M$ together with a multiplication $\R\times \M \to \M$ that is strictly
associative and coherently distributive, as spelled out in
\cite{EM}*{9.1.1}.

\subsection{$J$-graded \bps and \sbcs} \label{sec:igraded}
The following definition of a $J$-graded \bp is designed to axiomatize the
key properties of the functor $G\R$ described in section~\ref{sec:gq},
and simultaneously to generalize the definition of a \bp (as the case
$J = 0$).  More generally, we could have introduced $J$-graded \bcs
(resp.~$J$-graded commutative \bcs), generalizing \bcs (resp.~commutative
\bcs), but this would have led to an even more cumbersome definition.
We will therefore always assume that the input $\R$ to our machinery has
been transformed to an equivalent bipermutative or \sbc before we start.

\begin{defn} \label{def:igradedbim}
Let $(J, +, 0, \chi)$ be a small permutative category.
A \emph{$J$-graded \bp} is a functor
$$
\C \: J \lra \strict
$$
from $J$ to the category $\strict$ of small permutative categories and
strict symmetric monoidal functors, together with data $(\otimes, 1,
\gamma_\otimes)$ as specified below, and subject to the following
conditions.  The permutative structure of $\C(x)$ will be denoted
$(\C(x),\oplus, 0_x, \gamma_\oplus)$.

\begin{enumerate}
\item \label{igradedtensor}
There are composition functors
$$
\otimes \: \C(x)\times \C(y)\to \C(x+y)
$$
for all $x, y \in J$, that are natural in $x$ and~$y$.
More explicitly, for each pair of objects $a \in \C(x)$, $b \in \C(y)$
there is an object $a \otimes b$ in $\C(x+y)$, and for each pair of
morphisms $f \: a \to a'$, $g \: b \to b'$ there is a morphism $f \otimes
g \: a \otimes b \to a' \otimes b'$, satisfying the usual associativity
and unitality requirements.
For each pair of morphisms $k\: x \to z$, $\ell\: y \to w$ in $J$
the diagram
$$
\xymatrix{
{\C(x)\times \C(y)} \ar[r]^-{\otimes} \ar[d]_{\C(k)\times \C(\ell)}& {\C(x+y)}
\ar[d]^{\C(k + \ell)}\\
{\C(z)\times \C(w)} \ar[r]^-{\otimes} & {\C(z + w)}
}
$$
commutes.

\item \label{igradedunit}
There is a unit object $1 \in \C(0)$, such that $1 \otimes (-) \: \C(y) \to
\C(y)$ and $(-) \otimes 1 \: \C(x) \to \C(x)$ are the identity functors for
all $x,y \in J$.
More precisely, the inclusion $\{1\} \times \C(y) \to \C(0) \times \C(y)$
composed with $\otimes\: \C(0) \times \C(y) \to \C(0+y) = \C(y)$ equals
the projection isomorphism $\{1\} \times \C(y) \cong \C(y)$, and likewise
for the functor from $\C(x) \times \{1\}$.

\item \label{igradedmulttwist}
For each pair of objects $a \in \C(x)$, $b \in \C(y)$ there is a
twist isomorphism
$$
\gamma_\otimes =
\gamma_\otimes^{a,b}\: a\otimes b \lra \C(\chi^{y,x})(b \otimes a) 
$$
in $\C(x+y)$, where $\chi^{y,x} \: y+x \to x+y$, such that
$$
\xymatrix{
{a\otimes b} \ar[d]_{f \otimes g} \ar[r]^-{\gamma_\otimes^{a,b}} &
{\C(\chi^{y,x})(b \otimes a)} \ar[d]^{\C(\chi^{y,x})(g \otimes f)}\\
{a' \otimes b'} \ar[r]^-{\gamma_\otimes^{a',b'}}&
{\C(\chi^{y,x})(b' \otimes a')}
}
$$
commutes for any $f$, $g$ as above, and
$$
\C(k + \ell)(\gamma_\otimes^{a,b}) =
\gamma_\otimes^{\C(k)(a),\C(\ell)(b)} \,,
$$
for any $k$, $\ell$ as above.
We require that $\C(\chi^{y,x})(\gamma_\otimes^{b,a}) \circ
\gamma_\otimes^{a,b}$ is equal to the identity on $a \otimes b$ for all
objects $a$ and~$b$:
$$
\xymatrix{
{a\otimes b} \ar[rr]^-{\id_{a\otimes b}} \ar[dr]_-{\gamma_\otimes^{a,b}}
  && \C(\chi^{y,x})\C(\chi^{x,y})(a \otimes b) \\
& {\C(\chi^{y,x})(b \otimes a)}
  \ar[ur]_-{\C(\chi^{y,x})(\gamma_\otimes^{b,a})}
}
$$
and that $\gamma_\otimes^{a,1}$ and $\gamma_\otimes^{1,a}$ are equal to
the identity on $a$ for all objects~$a$.

\item \label{igradedmult}
The composition $\otimes$ is strictly associative, and the diagram
$$
\xymatrix{
{a \otimes b \otimes c} \ar[r]^{\gamma_\otimes^{a\otimes b,c}}
\ar[d]_{\id \otimes \gamma^{b,c}}&
{\C(\chi^{z,x+y})(c \otimes a \otimes b)}
\ar[d]^{\C(\chi^{z,x+y})(\gamma^{c,a} \otimes \id)}\\
\C(\id + \chi^{z,y})(a \otimes c \otimes b) \ar@{=}[r] &
\C(\chi^{z,x+y})\C(\chi^{x,z} + \id)(a \otimes c \otimes b)
}
$$
commutes for all objects $a$, $b$ and~$c$ (compare \cite{ML}*{p.~254, (7a)}).

\item \label{igradedzeros}
Multiplication with the zero object $0_x$ annihilates everything, for
each $x \in J$.
More precisely, the inclusion $\{0_x\}\times \C(y)\to \C(x)\times \C(y)$
composed with $\otimes \: \C(x)\times \C(y) \to \C(x+y)$ is the constant
functor to $0_{x+y}$, and likewise for the composite functor from $\C(x)
\times \{0_y\}$.

\item \label{igradedrightdist}
\Rightdistributivity holds strictly, \ie,
$$
\xymatrix{(\C(x)\times \C(x)) \times \C(y) \ar[r]^-{\oplus \times \id}
\ar[d]_\Delta&
\C(x)\times \C(y)\ar[dd]^\otimes\\
(\C(x)\times \C(y))\times(\C(x)\times \C(y))\ar[d]_{\otimes\times\otimes}&\\
\C(x+y)\times \C(x+y)\ar[r]^-{\oplus} & \C(x+y)
}
$$
commutes, where $\oplus$ is the monoidal structure and $\Delta$ is the
diagonal on $\C(y)$ combined with the identity on $\C(x)\times \C(x)$,
followed by a twist.  We denote these instances of identities
by $\dr$, so $\dr = \id \: \oplus \circ (\otimes \times \otimes) \circ
\Delta \to \otimes \circ (\oplus \times \id)$.

\item \label{igradedleftdist}
The \leftdistributivity transformation, $\dl$, is given in terms of
$\dr$ and $\gamma_\otimes$ as
$$
\dl = \gamma_\otimes \circ \dr \circ (\gamma_\otimes \oplus
\gamma_\otimes) \,.
$$
(Here we suppress the twist $\C(\chi)$ from the notation.)
More explicitly, for all $x, y \in J$ and $a \in \C(x)$, $b,b' \in \C(y)$
the following diagram defines $\dl$:
$$
\xymatrix{
{a \otimes b \oplus a \otimes b'} \ar[d]_{\dl}
\ar[rr]^-{\gamma_\otimes^{a,b}
\oplus \gamma_\otimes^{a,b'}} & & {\C(\chi^{y,x})(b \otimes a) \oplus
\C(\chi^{y,x})(b' \otimes a)} \ar@{=}[d] \\
{a \otimes (b \oplus b')} \ar@{=}[d]& & {\C(\chi^{y,x})(b \otimes a \oplus
b' \otimes a)}
\ar[d]^{\C(\chi^{y,x})(\dr) = \id} \\
{\C(\chi^{y,x})\C(\chi^{x,y})(a \otimes (b \oplus b'))}& &
{\C(\chi^{y,x})((b \oplus b') \otimes a)}
\ar[ll]_-{\C(\chi^{y,x})(\gamma_\otimes^{b \oplus b', a})}
}
$$

\item \label{igradedpermutation}
The diagram
$$
\xymatrix{
{(a \otimes b) \oplus (a \otimes b')} \ar[r]^-{\dl}
\ar[d]_{\gamma_\oplus} &
{a \otimes (b \oplus b')} \ar[d]^{\id \otimes \gamma_\oplus}\\
{(a \otimes b') \oplus (a \otimes b)} \ar[r]^-{\dl} & {a \otimes
  (b' \oplus b)}
}
$$
commutes for all objects.  The analogous diagram for $\dr$ also commutes.
Due to the definition of $\dl$ in terms of $\gamma_\otimes$
and the identity $\dr$, it suffices to demand that
$\gamma_\oplus \circ (\gamma_\otimes \oplus
\gamma_\otimes)=(\gamma_\otimes \oplus \gamma_\otimes)\circ \gamma_\oplus$
and
$(\gamma_\oplus\otimes \id)\circ \gamma_\otimes =\gamma_\otimes\circ
(\id\otimes \gamma_\oplus)$.

\item \label{igradeddistass}
The distributivity transformations are associative, \ie, the diagram
$$
\xymatrix{
{(a \otimes b \otimes c) \oplus (a \otimes b \otimes c')}
\ar[d]_{\dl} \ar[dr]^{\dl} & \\
{a \otimes ((b \otimes c) \oplus (b \otimes c'))} \ar[r]_-{\id
\otimes \dl} & {a \otimes b \otimes (c \oplus c')}
}
$$
commutes for all objects.
\item \label{igradedpentagon}
The following pentagon diagram commutes
$$
\xymatrix@C=-1.0cm{
& {(a \otimes (b \oplus b')) \oplus (a' \otimes (b \oplus b'))}
\ar[ddr]^{\dr}& \\
{(a \otimes b) \oplus (a \otimes b') \oplus (a' \otimes b) \oplus
(a' \otimes b')} \ar[ur]^{\dl\oplus \dl} \ar[dd]_{\id \oplus \gamma_\oplus
\oplus \id} & & \\
& & {(a \oplus a') \otimes (b \oplus b')} \\
{(a \otimes b) \oplus (a' \otimes b) \oplus (a \otimes b') \oplus
(a' \otimes b')} \ar[rd]_{\dr \oplus \dr}& & \\
& {((a \oplus a') \otimes b) \oplus ((a \oplus a') \otimes b')}
\ar[uur]_{\dl}&
}
$$
for all objects $a, a' \in \C(x)$ and $b, b' \in \C(y)$.
\end{enumerate}
\end{defn}

\begin{rem} \label{rem:igradedbim}
In Definition~\ref{def:igradedbim}, condition~\eqref{igradedtensor}
says that we have a binatural transformation
$$
\otimes\: \C\times \C\Rightarrow \C\circ (+)
$$
of bifunctors $J\times J\to \cat$, where $\cat$ denotes the category
of small categories.  Condition~\eqref{igradedmulttwist}
demands that there is a modification \cite{ML}*{p.~278}
$$
\xymatrix{
\C\times \C\ar@{=>}[r]^-{c_{\cat}}\ar@{=>}[d]^{\otimes}\ar@{}[dr]|-{{\overset{\gamma_{\otimes}}{\Rrightarrow}}}&
(\C\times \C)\circ tw_J\ar@{=>}[d]^{\otimes}\\
\C\circ (+) &\C\circ (+) \circ tw_J\ar@{=>}[l]^-{\C(\chi)}
}
$$
where $c_{\cat}$ is the symmetric structure on $\cat$ (with respect to
product) and $tw_J$ is the interchange of factors on $J\times J$.
\end{rem}

In the following we will denote a $J$-graded \bp $\C\: J \rightarrow
\strict$ by $\C^\bullet$ if the category~$J$ is clear from the context.
For the one-morphism category $J=0$, a $J$-graded bipermutative category
is the same as a \bp.  Thus every $J$-graded \bp $\C^\bullet$ comes with
a \bp $\C(0)$, and $\C^\bullet$ can be viewed as a functor $J\to
\C(0)\text{-modules}$.

\begin{ex} \label{ex:bipsets}
We consider the small \bp of finite sets, whose objects are the finite
sets of the form $\bfn = \{1,\ldots,n\}$ for $n \geq 0$, and whose
morphisms $\bfm \to \bfn$ are all functions
$\{1, \dots, m\} \to \{1, \dots, n\}$.

Disjoint union of sets gives rise to a permutative structure
$$
\bfn \oplus \bfm := \bfn \sqcup \bfm 
$$
and we identify $ \bfn \sqcup \bfm $ with $\mathbf{n+m}$.
For functions $f \: \bfn \to \bfn'$ and $g \: \bfm \to \bfm'$
we define $f \oplus g$ as the map on the disjoint union $f \sqcup g$
which we will denote by $f + g$.
The additive twist $c_{\oplus}$ is given by the shuffle maps
$$
\chi(n,m)\: \mathbf{n+m} \lra \mathbf{m+n}
$$
with
$$
\chi(n,m)(i) =
\begin{cases}
m+i & \text{for $i \leq n$} \\
i-n & \text{for $i > n$}.
\end{cases}
$$
Multiplication of sets is defined via
$$
\bfn \otimes \bfm := \mathbf{nm} \,.
$$
If we identify the element $(i-1)\cdot m+j$ in $\mathbf{nm}$ with the
pair $(i,j)$ with $i \in \bfn$ and $j \in \bfm$, then the function $f
\otimes g$ is given by
$$
(i,j) \mapsto (f(i),g(j)),
$$
and the multiplicative twist
$$
c_\otimes \: \bfn \otimes \bfm \lra \bfm \otimes \bfn
$$
sends $(i,j)$ to $(j,i)$.
The empty set $\mathbf0$ is a strict zero for the addition and the
singleton set $\mathbf1$ is a strict unit for the multiplication.
\Rightdistributivity is the identity and the \leftdistributivity law is
given by the resulting permutation
$$
\dl = c_\otimes \circ \dr \circ (c_\otimes \oplus c_\otimes) \,.
$$
For later reference we denote this instance of $\dl$ by $\xi$.

Considering only the subcategory of bijections, instead of arbitrary
functions, results in the bipermutative category of finite sets $\E$
that we referred to in the introduction.  Later, we will make use of the
zeroless \bp of finite nonempty sets and surjective functions.
\end{ex}

\begin{defn} \label{def:igradedbc}
A \emph{$J$-graded \sbc} is a functor $\C \: J \to \strict$ to the
category of permutative categories and strict symmetric monoidal
functors, satisfying the conditions of Definition~\ref{def:igradedbim},
except that we do not require the existence of the natural isomorphism
$\gamma_\otimes$, and the \leftdistributivity\xspace isomorphism $\dl$ is
not given in terms of $\dr$.  Axiom~\eqref{igradedleftdist} of Definition
\ref{def:igradedbim} has to be replaced by the following condition:

\begin{enumerate}
\item[(\ref{igradedleftdist}')]
The diagram
$$
\xymatrix{
{a \otimes b \otimes c \oplus a \otimes b' \otimes c} \ar[r]^-{\dr}
\ar[d]_{\dl} & {(a \otimes b \oplus a \otimes b') \otimes c}
\ar[d]^{\dl \otimes \id}\\
{a \otimes (b \otimes c \oplus b' \otimes c)} \ar[r]^-{\id \otimes
\dr} & {a \otimes (b \oplus b') \otimes c}
}
$$
commutes for all objects.
\end{enumerate}
\end{defn}

In the $J$-graded bipermutative case condition (\ref{igradedleftdist}')
follows from the other axioms.

\begin{defn} \label{def:laxmap}
A \emph{lax morphism of $\bps$}, $F \: \C \to \D$, is a pair of
lax symmetric monoidal functors $(\C,\oplus,0_\C, c_\oplus) \to (\D,
\oplus,0_\D, c_\oplus)$ and $(\C,\otimes,1_\C, c_\otimes) \to (\D,\otimes,1_\D,
c_\otimes)$, with the same underlying functor $F \: \C \to \D$, that
respect the left and right distributivity laws.

In other words, we have a binatural transformation from
$\oplus \circ (F \times F)$ to $F \circ \oplus$:
$$
\eta_\oplus = \eta_\oplus(a,b) \: F(a) \oplus F(b) \rightarrow
F(a \oplus b)
$$
for $a,b \in \C$, as well as a binatural transformation from
$\otimes \circ (F \times F)$ to $F \circ \otimes$:
$$
\eta_\otimes = \eta_\otimes(a,b) \: F(a) \otimes F(b) \rightarrow
F(a \otimes b)
$$
for $a,b \in \C$, plus morphisms $0_\D \to F(0_\C)$ and $1_\D \to
F(1_\C)$.  We require that these commute with $c_\oplus$ and $c_\otimes$,
respectively, and that the following diagram (and the analogous one for
$\dl$) commutes
$$
\xymatrix{
{F(a) \otimes F(b) \oplus F(a') \otimes F(b)} \ar[d]_{\eta_\otimes
\oplus \eta_\otimes} \ar@{=}[r]^-{\dr = \id}&
{(F(a)\oplus F(a'))\otimes F(b)} \ar[r]^-{\eta_\oplus \otimes \id}
&{F(a \oplus a') \otimes F(b)} \ar[d]^{\eta_\otimes}\\
{F(a\otimes b) \oplus F(a' \otimes b)} \ar[r]_{\eta_\oplus} &
{F(a\otimes b \oplus a' \otimes b)} \ar@{=}[r]_{F(\dr) =
\id} &{F((a \oplus a')\otimes b)}
}
$$
for all objects $a,a',b \in \C$, \ie, we have
$$
\eta_\oplus \circ
(\eta_\otimes \oplus\eta_\otimes) = \eta_\otimes \circ(\eta_\oplus \otimes
\id)
$$
and
$$
F(\gamma_\otimes
\circ (\gamma_\otimes \oplus \gamma_\otimes)) \circ \eta_\oplus \circ
(\eta_\otimes \oplus \eta_\otimes) =
\eta_\otimes \circ (\id \otimes \eta_\oplus) \circ
\gamma_\otimes \circ (\gamma_\otimes \oplus \gamma_\otimes) \,.
$$

For a \emph{lax morphism of $\sbcs$} we demand that $F$ is lax monoidal
with respect to $\otimes$, lax symmetric monoidal with respect to $\oplus$,
and that
$$
F(\dl) \circ \eta_\oplus \circ (\eta_\otimes \oplus
\eta_\otimes) = \eta_\otimes \circ (\id \otimes \eta_\oplus)\circ \dl
\quad \text{and} \quad
F(\dr) \circ \eta_\oplus \circ (\eta_\otimes \oplus
\eta_\otimes) = \eta_\otimes \circ (\eta_\oplus \otimes \id) \circ \dr.
$$
\end{defn}

\begin{defn}\label{def:gradedlaxmap}
A \emph{lax morphism of $J$-graded $\bps$}, $F\: \C^\bullet \rightarrow
\D^\bullet$, consists of a natural transformation $F$ from $\C^\bullet$
to $\D^\bullet$ that is compatible with the bifunctors $\oplus$, $\otimes$
and the units.  Additively, we require a transformation
$\eta_\oplus$ from $\oplus \circ (F\times F)$ to $F \circ \oplus$:
$$
\xymatrix{
{\C(x) \times \C(x)} \ar[r]^-{\oplus} \ar[d]_{F_x\times F_x} & {\C(x)}
\ar[d]^{F_x} \\
{\D(x) \times \D(x)} \ar[r]^-{\oplus} \ar@{=>}[ur]|{\eta_\oplus^x}
& {\D(x)}
}
$$
that commutes with $\gamma_\oplus$, is binatural with respect to morphisms
in $\C(x) \times \C(x)$, and is natural with respect to $x$.
Multiplicatively, we require a transformation
$\eta_\otimes$ from $\otimes \circ (F \times F)$ to $F \circ \otimes$:
$$
\xymatrix{
{\C(x) \times \C(y)} \ar[r]^-{\otimes} \ar[d]_{F_x\times F_y} & {\C(x+y)}
\ar[d]^{F_{x+y}} \\
{\D(x) \times \D(y)} \ar[r]^-{\otimes} \ar@{=>}[ur]|{\eta_\otimes^{x,y}}
& {\D(x+y)}
}
$$
that commutes with $\gamma_\otimes$, is binatural with respect to
morphisms in $\C(x) \times \C(y)$, and is natural with respect to $x$
and~$y$.  The functor $F$ must respect the distributivity constraints
in that it fulfills
$$
\eta_\oplus \circ (\eta_\otimes \oplus
\eta_\otimes) = \eta_\otimes \circ (\eta_\oplus \otimes \id)
$$
and
$$
F(\dl) \circ \eta_\oplus \circ (\eta_\otimes \oplus
\eta_\otimes) = \eta_\otimes \circ (\id \otimes \eta_\oplus) \circ \dl\,. 
$$

For a \emph{lax morphism of $J$-graded $\sbcs$} there is no requirement
on ($F$ and) $\eta_\otimes$ concerning the multiplicative twist
$\gamma_\otimes$.
\end{defn}

\section{A cubical construction on (bi-)permutative categories} \label{sec:gq}

We remodel the Grayson--Quillen construction \cite{GQ} of the group
completion of a permutative category to suit our multiplicative needs.
The na{\"\i}ve product $(ac \oplus bd, ad \oplus bc)$ of two pairs
$(a, b)$ and $(c, d)$ in their model will be replaced by the quadruple
$\left(\smallmatrix ac & ad \\ bc & bd\endsmallmatrix\right)$, where no
order of adding terms is chosen.  This avoids the ``phoniness'' of the
multiplication \cite{Th2}, but requires that we keep track of $n$-cubical
diagrams of objects, of varying dimensions $n\geq0$.  We start by introducing
the indexing category $I \smallint \Q$ for all of these diagrams, and
then describe the $I \smallint \Q$-shaped diagram~$G\M$ associated
to a permutative category~$\M$.  If we start with a bipermutative
category~$\R$, the result will be an $I \smallint \Q$-graded bipermutative
category~$G\R$.

\subsection{An indexing category} \label{sec:index}
Let $I$ be the usual skeleton of the category of finite sets and injective
functions, \ie, its objects are the finite sets $\bfn = \{1,\ldots,n\}$
for $n \geq 0$, and its morphisms are the injective functions $\vphi
\: \bfm \to \bfn$.  We define the sum of two objects $\bfn$ and $\bfm$
to be $\bfn + \bfm$ and use the twist maps $\chi(n,m)$ defined in Example
\ref{ex:bipsets}.  Then $(I, +, \mathbf0, \chi)$ is a permutative category.

For each $n\geq0$, let $\Q\bfn$ be the category whose objects are subsets $T$
of
$$
\{\pm 1, \dots, \pm n\} = \{-n, \dots, -1, 1, \dots, n\}
$$
such that the absolute value function $T\to\Z$ is injective.  In other
words, we may have $i \in T$ or $-i \in T$, but not both, for each $1 \leq
i \leq n$.
Morphisms in $\Q\bfn$ are inclusions $S \subseteq T$ of subsets.
(The objects could equally well be described as pairs $(T, w)$ where $T
\subseteq \bfn$ and $w$ is a function $T \to \{\pm1\}$, and similarly
for the morphisms.)
Let $\P\bfn \subseteq \Q\bfn$ be the full subcategory generated by
the subsets of $\bfn = \{1, \dots, n\}$, \ie, the $T$ with only positive
elements.

For example, the category $\Q\mathbf2$ can be depicted as:
$$
\xymatrix{
\{-1,2\} & \{2\} \ar[l] \ar[r] & \{1,2\} \\
\{-1\} \ar[u] \ar[d] & \varnothing \ar[l] \ar[r] \ar[u] \ar[d] & \{1\}
	\ar[u] \ar[d] \\
\{-1,-2\} & \{-2\} \ar[l] \ar[r] & \{1,-2\} \\
}
$$
and $\P\mathbf2$ is given by the upper right hand square.  We shall use
$\P\bfn$ and $\Q\bfn$ to index $n$-dimensional cubical diagrams with $2^n$
and $3^n$ vertices, respectively.

For each morphism $\vphi \: \bfm \to \bfn$ in $I$ we define a functor
$\Q\vphi \: \Q\bfm \to \Q\bfn$ as follows.  First, let $C\vphi = \bfn
\setminus \vphi(\bfm)$ be the complement of the image of the injective
function $\vphi$.  Then extend $\vphi$ to an odd function $\{\pm1,
\dots, \pm m\} \to \{\pm1, \dots, \pm n\}$, which we also call $\vphi$,
and let
$$
(\Q\vphi)(S) = \vphi(S) \sqcup C\vphi
$$
for each object $S \in \Q\bfm$.
For example, if $\vphi \: \mathbf1 \to \mathbf2$ is given by
$\vphi(1) = 2$, then $C\vphi = \{1\}$ and $\Q\vphi$ is the functor
$$
\xymatrix{
\{-1\} \ar@{|->}[d] & \varnothing \ar@{|->}[d] \ar[l] \ar[r]
	& \{1\} \ar@{|->}[d] \\
\{1, -2\} & \{1\} \ar[l] \ar[r] & \{1,2\}
}
$$
embedding $\Q\mathbf1$ into the right hand column of $\Q\mathbf2$.
Similarly, the function $\vphi \: \mathbf1 \to \mathbf2$ with
$\vphi(1)=1$ embeds $\Q\mathbf1$ into the upper row of $\Q\mathbf2$.

If $S \subseteq T$ then
clearly $(\Q\vphi)(S) \subseteq (\Q\vphi)(T)$.
If $\psi\:\bfk\to\bfm$ is a second morphism in $I$, we see
that $\Q\vphi\circ\Q\psi=\Q(\vphi\psi)$, and so $\bfn \mapsto
\Q\bfn$ defines a functor $\Q\: I\to \cat$.  Restricting to sets with
only positive entries, we get a subfunctor $\P\subseteq\Q$
that may be easier to grasp: if $\vphi\:\bfm\to\bfn\in I$, then
$\P\vphi\:\P\bfm \to\P\bfn$ is the functor
sending $S\subseteq\bfm$ to $\vphi(S) \sqcup C\vphi$, where $C\vphi=\bfn
\setminus \vphi(\bfm)$ is the complement of the image of $\vphi$.

Our main indexing category will be the Grothendieck construction
$J = I\smallint\Q$.  This is the category with objects pairs $x =
(\bfm,S)$ with $\bfm\in I$ and $S\in\Q\bfm$, and with morphisms $x =
(\bfm,S) \to (\bfn,T) = y$ consisting of pairs $(\vphi, \iota)$ with
$\vphi\:\bfm\to\bfn$ a morphism in $I$ and $\iota \: (\Q\vphi)(S)
\subseteq T$ an inclusion.
To give a functor $\C$ from $I \smallint \Q$ to any category is equivalent
to giving a functor $\C_\bfn$ from $\Q\bfn$ for each $n\geq0$, together
with natural transformations $\C_\vphi \: \C_\bfm \Rightarrow \C_\bfn
\circ \Q\vphi$ for all $\vphi \: \bfm \to \bfn$ in $I$, which must
be compatible with identities and composition in $I$.

Consider the functor $+\:\Q\bfn\times\Q\bfm\to
\Q(\mathbf{n+m})$ defined as follows.  The injective functions
$in_1\:\bfn\to\mathbf{n+m}$ and $in_2\:\bfm\to\mathbf{n+m}$ are
given by $in_1(i)=i$ and $in_2(j)=n+j$.  Extending to odd functions
we define $T+S$ to be the disjoint union of images
$$
in_1(T) \sqcup in_2(S) \subseteq \{\pm1,\dots,\pm(n+m)\} \,.
$$
For example, if $T=\{-1,2\}\subseteq\{\pm1,\pm2,\pm3\}$
and $S=\{1,-2\}\subseteq \{\pm1,\pm2\}$, then
$T+S=\{-1,2,4,-5\}\subseteq\{\pm1,\dots,\pm5\}$.

These functors, for varying $n,m\geq0$, combine to an
\emph{addition functor} on $I \smallint \Q$.  For each pair
of objects $(\bfn,T), (\bfm,S)\in I\smallint\Q$ we define
$(\bfn,T)+(\bfm,S)=(\mathbf{n+m},T+S)$, and
likewise on morphisms.

\begin{lem}
Addition makes $I\smallint\Q$ and $I\smallint\P$ into permutative
categories.
\end{lem}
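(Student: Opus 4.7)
\emph{Proof plan.} The idea is that the permutative structure on $I\smallint\Q$ should be inherited componentwise from the permutative structure on $(I,+,\mathbf 0,\chi)$ and the sum operation $+\:\Q\bfn\times\Q\bfm\to\Q(\mathbf{n+m})$. Concretely, I would first check that the operation $+$ on objects extends to a bifunctor on $I\smallint\Q$: given morphisms $(\vphi,\iota)\:(\bfn,T)\to(\bfn',T')$ and $(\psi,\kappa)\:(\bfm,S)\to(\bfm',S')$, set their sum to be $(\vphi+\psi,\iota+\kappa)$, where the inclusion $\iota+\kappa\:\Q(\vphi+\psi)(T+S)\subseteq T'+S'$ is obtained from the identity $\Q(\vphi+\psi)(T+S)=\Q\vphi(T)+\Q\psi(S)$ (which follows because $(\vphi+\psi)\circ in_k=in_k'\circ(\vphi\text{ or }\psi)$ and $C(\vphi+\psi)=C\vphi+C\psi$). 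Functoriality is then straightforward.

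The second step is to exhibit the strict structure. Strict associativity $(x+y)+z=x+(y+z)$ on objects reduces to strict associativity of $+$ in $I$ together with associativity of the disjoint-union formula $T+S=in_1(T)\sqcup in_2(S)$, which is checked directly by identifying the two triple sums as the same subset of $\{\pm 1,\dots,\pm(n+m+k)\}$; on morphisms it is analogous via the identity described above. The pair $(\mathbf0,\varnothing)$ serves as a strict two-sided unit since $\mathbf0$ is a strict unit for $+$ in $I$ and adjoining $\varnothing$ does not alter the underlying subset.

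Third, I would define the symmetry at $x=(\bfm,S)$, $y=(\bfn,T)$ as the morphism
$$
\chi^{y,x}\:(\bfm+\bfn,S+T)\lra(\bfn+\bfm,T+S)
$$
whose underlying map in $I$ is the shuffle $\chi(m,n)$ from Example~\ref{ex:bipsets}. The crucial calculation is that, extending $\chi(m,n)$ to an odd function, $\chi(m,n)(in_1(S))=in_2'(S)$ and $\chi(m,n)(in_2(T))=in_1'(T)$ (with $in_k,in_k'$ the two summand inclusions on source and target), so that $(\Q\chi(m,n))(S+T)=T+S$ with no extra complement; hence the inclusion data in the Grothendieck construction is an \emph{equality}, and $\chi^{y,x}$ is an isomorphism. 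Because the component in $I$ is the strict symmetry $\chi(m,n)$, the axioms $\chi^{x,y}\circ\chi^{y,x}=\id$, $\chi^{0,x}=\id=\chi^{x,0}$, and the permutative hexagon all descend directly from the corresponding identities in $I$; naturality in $(\vphi,\iota)$ and $(\psi,\kappa)$ similarly reduces to naturality of $\chi$ in $I$ after observing that both composites in each naturality square carry the same inclusion datum in $\Q$ (once more by the equality $(\Q\chi)(S+T)=T+S$).

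Finally, restricting to $\P\subseteq\Q$ is immediate, since disjoint unions, shuffles, and complements of subsets of positive integers stay inside $\bfn$; hence the same definitions equip $I\smallint\P$ with a permutative structure for which the inclusion $I\smallint\P\hookrightarrow I\smallint\Q$ is a strict symmetric monoidal functor. The only step that requires real care (rather than bookkeeping) is the equality $(\Q\chi(m,n))(S+T)=T+S$; everything else is transport along the permutative structure of $I$.
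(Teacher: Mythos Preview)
Your proposal is correct and follows the same approach as the paper: identify the unit as $(\mathbf0,\varnothing)$ (the paper writes $(\mathbf0,\mathbf0)$, which is the same object) and the symmetry as $(\chi(n,m),\id)$, whose well-definedness hinges on the equality $(\Q\chi)(S+T)=T+S$ that you single out. The paper's own proof is a one-line statement of these two pieces of data, so your write-up simply makes explicit the bifunctoriality and coherence checks that the authors leave to the reader.
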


\begin{proof}
The zero object is $(\mathbf0, \mathbf0)$, and the isomorphism
$(\chi(n,m), \id) \: (\mathbf{n+m},T+S) \to (\mathbf{m+n},S+T)$ provides
the symmetric structure.
\end{proof}

\subsection{The cube construction} \label{sec:cube}
Let $\M$ be a permutative category (with zero).  Define a functor
$$
\M_\bfn\: \P\bfn\to\strict
$$
for each $n\geq0$, by sending a subset $T\subseteq\bfn$ to $\M_{\bfn}(T)
= \M^{\P T}$, the permutative category of functions from the {\bf set}
$\P T$ of subsets of $T$ to $\M$, \ie, the product of one copy of $\M$
for each subset of $T$.  If $\iota \: S\subseteq T$ we get a strict
symmetric monoidal functor $\M_{\bfn}(\iota) \: \M^{\P S}\to \M^{\P
T}$ by sending the object $a=(a_U \mid U\subseteq S) \in \M^{\P S}$
to $(a_{V\cap S} \mid V\subseteq T)$, and likewise with morphisms.
These are diagonal functors, since each $a_U$ gets repeated once for
each $V$ with $V \cap S = U$.

For $n = 0,1,2$ the diagrams $\M_\bfn$ have the following shapes:
$$
\M \ ,\qquad \M \to \M\times\M \qquad\text{and}\qquad
\xymatrix{\M\times\M\ar[r]&\M^{\times4}\\\M\ar[u]\ar[r]&\M\times\M\ar[u]}
$$
where the morphisms are the appropriate diagonals.  In particular,
$\M_{\bfn}(\bfn)$ is the product of $2^n$ copies of~$\M$,
viewed as spread out over the corners of an $n$-dimensional cube.

For $\vphi\:\bfm\to\bfn$ we define a natural transformation
$\M_\vphi\:\M_\bfm\Rightarrow \M_\bfn\circ\P\vphi
$:
for $S\in\P\bfm$ we let $\M_\vphi(S)$ be the composite
$$
\M_\bfm(S)=\M^{\P S}\cong \M^{\P(\vphi(S))}\to\M^{\P(\vphi(S)\sqcup C\vphi)}=
\M_\bfn((\P\vphi)(S))
$$
where the isomorphism is just the reindexation induced by $\vphi$,
and the functor $\M^{\P(\vphi(S))}\to\M^{\P(\vphi(S)\sqcup C\vphi)}$ is the
identity on factors indexed by subsets of $\vphi(S)$ and zero on the
factors that are not hit by $\vphi$.  Explicitly,
$$
\M_\vphi(S)(f)_V = 
\begin{cases}
  f_{\vphi^{-1}(V)} & \text{if $V\subseteq\vphi(S)$} \\
  0 & \text{otherwise}
\end{cases}
$$
for any morphism $f\: a\to b\in \M^{\P S}$ and $V\subseteq\vphi(S)\sqcup
C\vphi$.  These are extension by zero functors, not diagonals.
Each $f_U$ gets repeated exactly once, as $\M_\vphi(S)(f)_V$ for $V =
\vphi(U)$.

For instance, if $\vphi\:\mathbf1\to\mathbf2$ is given by $\vphi(1)=2$,
then $\M_{\mathbf1}(\varnothing) = \M\to\M\times\M = \M_{\mathbf2}(\{1\})$
and $\M_{\mathbf1}(\mathbf1) = \M\times\M\to\M^{\times4} \cong
\M_{\mathbf2}(\mathbf2)$ are given by appropriate inclusions onto
factors in products.  For both morphisms $\vphi \: \mathbf1\to\mathbf2$
the associated functors $\M\to\M\times\M$ are the inclusion onto the
$\varnothing$-factor, whereas the two functors $\M\times\M\to\M^{\times4}$
include onto either the $\varnothing$ and $\{1\}$ factors, or the
$\varnothing$ and $\{2\}$ factors, depending on $\vphi$.

We see that for all $S\subseteq T\subseteq\bfm$ and $\vphi \: \bfm \to
\bfn$, the diagram
$$
\xymatrix{
  \M^{\P S} \ar[r] \ar[d]_{\M_\vphi(S)} & \M^{\P T} \ar[d]^{\M_\vphi(T)} \\
  \M^{\P(\vphi(S)\sqcup C\vphi)} \ar[r] & \M^{\P(\vphi(T)\sqcup C\vphi)}
}
$$
commutes, sending $a\in \M^{\P S}$ both ways to $W\mapsto
a_{\vphi^{-1}(W)\cap S}$ if $W\subseteq \vphi(T)$ and $0$ otherwise.

If $\psi\:\bfk\to\bfm\in I$, then we have an equality $\M_{\vphi\psi}
= \M_\vphi \M_\psi$ of natural transformations $\M_{\bfk} \Rightarrow
\M_\bfn \circ \P(\vphi\psi)$ in:
$$
\xymatrix{
\P\bfk \ar[d]_{\P\psi} \ar@(l,l)[dd]_{P(\vphi\psi)}
	\ar[dr]^{\M_{\bfk}} \\
\P\bfm \ar[d]_{\P\vphi} \ar[r]^-{\M_{\bfm}} & \strict \\
\P\bfn \ar[ur]_{\M_{\bfn}} \\
}
$$
(This diagram is not strictly commutative.  The two right hand
triangles only commute up to the natural transformations $\M_{\psi}$
and $\M_{\vphi}$, respectively.)
Both natural transformations are represented by the functors $\M^{\P S}
\to \M^{\P(\vphi\psi(S)\sqcup C(\vphi\psi))}$ sending $a$ to $V\mapsto
a_{(\vphi\psi)^{-1}V}$ if $V\subseteq \vphi\psi(S)$ and $0$ otherwise.
Thus $\M$ can be viewed as a left lax transformation from the functor
$\P \: I \to \cat$ to the constant functor at $\strict$.
(We recall the definition of a left lax transformation in
subsection~\ref{sec:zeroless} below.)

This left lax transformation $\M \: \P \Rightarrow \strict$ extends to a
left lax transformation $\M \: \Q \Rightarrow \strict$ by declaring that
$\M_\bfn(T)=0$ if $T$ contains negative elements.

The first three diagrams now look like:
$$
\M \ ,\qquad 0 \gets \M \to \M\times\M \qquad\text{and}\qquad
\xymatrix{
0 & \M\times\M \ar[l]\ar[r] &\M^{\times4} \\
0 \ar[u]\ar[d] & \M \ar[u]\ar[d]\ar[r]\ar[l] & \M\times\M \ar[u]\ar[d] \\
0 & 0 \ar[l]\ar[r] & 0}
$$
Another way of saying that we have a left lax transformation $\Q
\Rightarrow\strict$ is to say that we have a functor $I\smallint\M\:
I\smallint\Q\to I\smallint\strict\cong I\times \strict$ (over $I$).
Projecting to the second factor, $I\smallint\M$ gives rise to a functor
$$
G\M\: I\smallint\Q\to\strict.
$$

Explicitly, $G\M(\bfn,T)=\M_\bfn(T)$, which is $\M^{\P T}$ if $T$
contains no negatives and $0$ otherwise.  If $\vphi\: \bfm\to\bfn\in
I$ and $\iota \:(\Q\vphi)(S)\subseteq T \in \Q\bfn$,
then $G\M(\vphi,\iota)\:\M_{\bfm}(S)\to\M_\bfn(T)$ is the composite of
$G\M(\vphi,\id)= \M_\vphi S\:\M_{\bfm}(S)\to\M_\bfn((\Q\vphi)(S))$ and
$G\M(\id,\iota)=\M_\bfn(\iota)\:\M_\bfn((\Q\vphi)(S))\to \M_\bfn(T)$.

\subsection{Multiplicative structure} \label{sec:multstr}
Since the diagram $G\M\: I\smallint\Q\to\strict$ is so
simple, only consisting of diagonals and inclusions onto factors in
products, algebraic structure on $\M$ is easily transferred to $G\M$.

\begin{prop}\label{prop:GQisIgrded} 
If $\R$ is a \sbc, then $G\R$ is an $I\smallint\Q$-graded \sbc.
If $\R$ is a \bp, then $G\R$ is an $I\smallint\Q$-graded \bp.
\end{prop}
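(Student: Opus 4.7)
The plan is to define the multiplication, unit, and (in the bipermutative case) the twist on $G\R$ coordinate-wise using the corresponding structure on $\R$, then verify each axiom of Definition~\ref{def:igradedbim} (resp.~of Definition~\ref{def:igradedbc} in the strictly bimonoidal case) pointwise. The essential combinatorial input is that every subset of $T+S$ has a unique decomposition $U+V$ with $U\subseteq T$ and $V\subseteq S$, so the coordinates of $\R^{\P(T+S)}$ correspond bijectively to pairs of coordinates of $\R^{\P T}$ and $\R^{\P S}$.

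For the construction, when $T$ and $S$ have no negative entries, define
\[
\otimes \: G\R(\bfn,T) \times G\R(\bfm,S) \to G\R(\bfn+\bfm,T+S), \qquad (a \otimes b)_{U+V} := a_U \otimes b_V.
\]
If either $T$ or $S$ contains a negative entry, both source and target are zero categories and the functor is uniquely determined. Take $1 := 1_\R \in G\R(\mathbf0,\varnothing) = \R$. In the bipermutative case, define the twist $\gamma_\otimes \: a\otimes b \to \C(\chi^{y,x})(b\otimes a)$ in $\R^{\P(T_1+T_2)}$ coordinate-wise, with component $\gamma_\otimes^{a_U,b_V}$ of $\R$ at index $U+V$, after checking that reindexing along the shuffle gives $\C(\chi^{y,x})(b\otimes a)_{U+V} = b_V\otimes a_U$.

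Most axioms now reduce pointwise. The unit law \eqref{igradedunit}, the twist conditions \eqref{igradedmulttwist}, strict associativity \eqref{igradedmult}, annihilation by zeros \eqref{igradedzeros} (using that $0\otimes a = a\otimes 0 = 0$ strictly in $\R$), strict right distributivity \eqref{igradedrightdist}, the formula for $\dl$ in \eqref{igradedleftdist} (or its substitute (\ref{igradedleftdist}$'$) in the strictly bimonoidal case), permutation compatibility \eqref{igradedpermutation}, associativity of distributivity \eqref{igradeddistass}, and the pentagon \eqref{igradedpentagon} all reduce, upon evaluating at an index $U+V$, to the corresponding axiom of the bipermutative (resp.~strictly bimonoidal) category $\R$.

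The main obstacle is axiom \eqref{igradedtensor}, naturality of $\otimes$ in $x,y \in J = I\smallint\Q$. A morphism $(\vphi,\iota)\:(\bfm,S)\to(\bfn,T)$ factors through $(\bfn,(\Q\vphi)(S))$, and $G\R(\vphi,\iota)$ correspondingly decomposes into a reindexing-with-extension-by-zero (from $\vphi$) followed by a diagonal (from $\iota$). Compatibility of $\otimes$ with diagonals is immediate, since the bijection $(U,V)\leftrightarrow U+V$ is natural in subset inclusions. Compatibility with extension by zero is precisely where the strict annihilation property of $\R$ is essential: any coordinate of $a\otimes b$ forced to $0$ by $\vphi_1$ on the $T_1$ side lies in an expression of the form $0 \otimes b_V$, which equals $0$ strictly, matching the outcome of extension by zero on the target. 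Assembling these checks (with the analogous bookkeeping for $\vphi_2$ on the $T_2$ side) completes the verification of naturality, and the proposition follows.
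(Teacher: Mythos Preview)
Your proof is correct and follows essentially the same approach as the paper: define $\otimes$, the unit, and $\gamma_\otimes$ coordinate-wise and verify all axioms pointwise, using the bijection between subsets of $T+S$ and pairs $(U,V)$ with $U\subseteq T$, $V\subseteq S$. If anything you are more explicit than the paper, which dispatches naturality in $(\bfn,T)$ and $(\bfm,S)$ with the phrase ``clearly natural'' and the remaining axioms with ``everything is defined pointwise''; your decomposition of a general morphism in $I\smallint\Q$ into a diagonal followed by an extension-by-zero, and your observation that strict annihilation in $\R$ is exactly what makes the extension-by-zero step commute with $\otimes$, fills in that gap nicely.
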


\begin{proof}
We must specify composition functors
$$
\otimes \: G\R(\bfn,T) \times G\R(\bfm,S) \to G\R(\mathbf{n+m},T+S)
$$
for all $(\bfn,T), (\bfm,S) \in I \smallint \Q$.  Let $a\in
G\R(\bfn,T)$ and $b\in G\R(\bfm,S)$.  If $S$ and $T$ only contain positive
elements, then $a \otimes b \in G\R(\mathbf{n+m},T+S)$ is defined by
$$
(a \otimes b)_{V+U} = a_V \otimes b_U \,,
$$
where the $\otimes$-product on the right is formed in $\R$.  As $V$
and $U$ range over all the subsets of $T$ and $S$, respectively, $V+U$
ranges over all the subsets of $T+S$.  If $T$ or $S$ contain negative
elements, we set $a \otimes b = 0$.  Likewise for morphisms in
$G\R(\bfn,T)$ and $G\R(\bfm,S)$.  These composition functors are clearly
natural in $(\bfn,T)$ and $(\bfm,S)$.

The unit object $1$ of $G\R(\mathbf0, \mathbf0) \cong \R$ corresponds
to the unit object $1_{\R}$ of $\R$.  In the bipermutative case, the
twist isomorphism
$$
\gamma_\otimes \: a \otimes b \lra
	G\R(\chi(m,n),\id) (b \otimes a)
$$
has components
$$
(a \otimes b)_{V+U} = a_V \otimes b_U
  \overset{\gamma_\otimes^{\R}}\lra b_U \otimes a_V = (b \otimes a)_{U+V}
$$
for all $V \subseteq T$ and $U \subseteq S$,
where $\gamma_\otimes^{\R}$ is the twist isomorphism in $\R$.

Since everything is defined pointwise, the multiplicative structure on
$\R$ forces all the axioms of an $I\smallint\Q$-graded \sbc (or
$I \smallint \Q$-graded \bp) to hold for $G\R$.
\end{proof}

\section{Hocolim-lemmata} \label{sec:hocolim}

We recall Thomason's homotopy colimit construction in the case of
a $J$-shaped diagram of zeroless permutative categories, and then
construct a derived version of this construction for permutative
categories with zero.

\subsection{The case without zeros} \label{sec:zeroless}
Let $\permnz$ be the category of permutative categories without zero
objects, and lax symmetric monoidal functors.  Let $\strictnz$ be the
subcategory with the same objects, but with strict symmetric monoidal
functors as morphisms.  There are forgetful functors $V \: \strictnz
\to \permnz$ and $U \: \permnz \to \cat$, with composite $W = UV \:
\strictnz \to \cat$.

For any small category $J$, let $\cat^J$ be the category of functors $J
\to \cat$ and \emph{left lax transformations}.  Recall that for functors
$\C, \D \: J \to \cat$, a left lax transformation $F \: \C \to \D$ assigns
to each object $x \in J$ a functor $F_x \: \C(x) \to \D(x)$, and to each
morphism $k \: x \to y$ in $J$ a natural transformation $\nu^k \: \D(k)
\circ F_x \Rightarrow F_y \circ \C(k)$ of functors $\C(x) \to \D(y)$:
$$
\xymatrix{
\labelmargin={1pt}
\C(x) \ar[r]^{\C(k)} \ar[d]_{F_x} & \C(y) \ar[d]^{F_y} \\
\D(x) \ar[r]_{\D(k)} \ar@{=>}[ur]^{\nu^k} & \D(y)
}
$$
These must be compatible with composition in $J$, so that $\nu^{\id} =
\id$ and $\nu^{\ell k} = \nu^\ell\C(k) \circ \D(\ell)\nu^k$ for $\ell \:
y \to z$ in $J$.  If each $\nu^k = \id$, we have a natural transformation
in the usual sense.

Similarly, let $\permnz^{\!J}$ be the category of functors $J \to \permnz$
and left lax transformations.  In this case, the categories $\C(x),
\C(y), \D(x), \D(y)$ etc.~are symmetric monoidal without zero, the
functors $\C(k), \D(k), F_x, F_y$ etc.~are lax symmetric monoidal, and
$\nu^k$ is a natural transformation of lax symmetric monoidal functors.
Finally, let $\strictnz^{\!J}$ be the category of functors $J \to \strictnz$
and left lax transformations.  In this case, all of the symmetric
monoidal functors are strict.

Let $\Delta \: \cat \to \cat^J$ be the constant $J$-shaped diagram
functor.  Given a functor $\C \: J \to \cat$, the Grothendieck
construction $J \smallint \C$ is a model for the homotopy colimit in
$\cat$ \cite{Th1}.  We recall that an object in $J \smallint \C$ is a
pair $(x, X)$ where $x \in J$ and $X \in \C(x)$ are objects, while a
morphism $(x, X) \to (y, Y)$ is a pair $(k, f)$ where $k \: x \to y \in
J$ and $f \: \C(k)(X) \to Y \in \C(y)$ are morphisms.  This construction
defines a functor $J \smallint (-) \: \cat^J \to \cat$, which is left
adjoint to $\Delta \: \cat \to \cat^J$.  Here it is, of course, important
that we are allowing left lax transformations as morphisms in $\cat^J$,
since otherwise the left adjoint would be the categorical colimit.

Thomason's homotopy colimit of permutative categories \cite{Th3}
is constructed to have a similar universal property with respect to
the composite $\Delta V \: \strictnz \to \permnz^{\!J}$, where $V$
is as above and $\Delta \: \permnz \to \permnz^{\!J}$ is the constant
$J$-shaped diagram functor.  We briefly recall the explicit description.

\begin{defn}
Let $\C \: J \to \permnz$ be a functor.  An object in $\hocolim_J \C$
is an expression
$$
n[(x_1,X_1), \dots, (x_n,X_n)]
$$
where $n\geq1$ is a natural number, each $x_i$ is an object of $J$, and
each $X_i$ is an object of $\C(x_i)$.  A morphism from $n[(x_1,X_1),
\dots, (x_n,X_n)]$ to $m[(y_1,Y_1), \dots, (y_m,Y_m)]$ consists of three
parts: a surjective function $\psi \: \bfn \to \bfm$, morphisms $\ell_i
\: x_i \to y_{\psi(i)}$ in $J$ for each $1\leq i\leq n$, and morphisms
$\vrho_j \: \bigoplus_{i \in \psi^{-1}(j)} \C(\ell_i)(X_i) \to Y_j$
in $\C(y_j)$ for each $1\leq j\leq m$.  We will write $(\psi, \ell_i,
\vrho_j)$ to signify this morphism.
\end{defn}

See \cite{Th3}*{3.22} for the definition of composition in the category
$\hocolim_J \C$.  This category is permutative, without a zero, if
one defines addition to be given by concatenation \cite{Th3}*{p.~1632}.
Each left lax transformation $F \: \C \to \D$ induces a strict symmetric
monoidal functor $\hocolim_J F \: \hocolim_J \C \to \hocolim_J \D$,
so this construction defines a functor $\hocolim_J \: \permnz^{\!J} \to
\strictnz$.

The universal property in \cite{Th3}*{3.21} says that $\hocolim_J \:
\permnz^{\!J} \to \strictnz$ is left adjoint to $\Delta V \: \strictnz
\to \permnz^{\!J}$.  Again, it is critical that we are allowing
left lax transformations as morphisms in $\permnz^{\!J}$.

Recall Definition~\ref{defn:unstable_stable_eq} of unstable equivalences
in $\cat$ and stable equivalences in $\permnz$ and $\strictnz$.
We use the corresponding pointwise notions in diagram categories like
$\cat^J$ and $\permnz^{\!J}$, so a left lax transformation $F \: \C \to \D$
between functors $\C, \D \: J \to \permnz$ is a stable (resp.~unstable)
equivalence if every one of its components $F_x \: \C(x) \to \D(x)$
is a stable (resp.~unstable) equivalence, for $x \in J$.

\begin{lem} \label{lem:hocolim_is_a_homotopy_functor}
Let $F \: \C \to \D$ be a stable (resp.~unstable) equivalence in
$\permnz^{\!J}$.  Then
$$
\hocolim_J F \: \hocolim_J \C \to \hocolim_J \D
$$
is a stable (resp.~unstable) equivalence in $\strictnz$.

If $\C \: J \to \permnz$ is a constant functor and $J$ is contractible,
then $\C(x) \we \hocolim_J \C$ is an unstable equivalence for each $x
\in J$.
\end{lem}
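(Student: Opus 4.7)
The plan is to pass to classifying spaces (for the unstable claim) or to Thomason's spectrum functor $\Spt$ (for the stable claim), and invoke in each case a homotopical fact that is standard in spaces or spectra. The essential input is Thomason's compatibility results from \cite{Th3}: for any $\C \: J \to \permnz$ there are natural equivalences
$$
|\hocolim_J \C| \simeq \hocolim_J |\C|
$$
of classifying spaces, and
$$
\Spt \hocolim_J \C \simeq \hocolim_J \Spt \C
$$
of spectra, where on the right-hand sides one takes the usual homotopy colimit over $J$ in the category of spaces (respectively, spectra).

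For the first assertion, if $F \: \C \to \D$ is an unstable equivalence, then $|F|$ is a pointwise weak equivalence of $J$-diagrams of spaces, so $\hocolim_J |F|$ is a weak equivalence since the homotopy colimit of spaces is a homotopy functor; Thomason's identification then gives that $|\hocolim_J F|$ is an unstable equivalence. The stable case is entirely parallel with $\Spt$ in place of $|-|$: a pointwise stable equivalence of $J$-diagrams of spectra has a stable equivalence as its hocolim, which the spectral compatibility identifies with $\Spt \hocolim_J F$.

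For the second assertion, the assignment $X \mapsto 1[(x, X)]$ defines a strict symmetric monoidal functor $\C(x) \to \hocolim_J \C$; under Thomason's identification its classifying space is the canonical inclusion of the fiber $|\C(x)|$ into $\hocolim_J |\C|$ at $x$. Since $\C$ is constant with value $\M := \C(x)$, this homotopy colimit has the homotopy type of $|J| \times |\M|$, and the projection onto $|\M|$ provides a homotopy inverse to the inclusion whenever $|J|$ is contractible.

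The main obstacle is not in the argument just sketched but in the compatibility results themselves: verifying that Thomason's explicit $\hocolim$ construction on permutative categories faithfully realizes the topological (respectively, spectral) homotopy colimit after geometric realization (respectively, after passing to $\Spt$). Granting these comparisons, both statements reduce cleanly to standard properties of homotopy colimits of spaces and of spectra.
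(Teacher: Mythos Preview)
Your treatment of the stable case is correct and matches the paper: Thomason's result \cite{Th3}*{4.1} gives the spectral comparison $\Spt\hocolim_J\C\simeq\hocolim_J\Spt\C$, and the claim follows.

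The unstable case, however, has a genuine gap. You invoke a comparison $|\hocolim_J\C|\simeq\hocolim_J|\C|$ as if it were a companion result to \cite{Th3}*{4.1}, but no such statement is available in \cite{Th3}; only the spectral version is proved there. You acknowledge this yourself (``granting these comparisons''), but the comparison you are granting is essentially the content of the lemma in the unstable case. The paper does not assume it; instead it resolves $\C$ simplicially by free permutative categories via the adjunction $(P,W)$, uses Thomason's argument that the augmentation $\hocolim_J V(PW)^{\bullet+1}\C\to\hocolim_J V\C$ is an unstable equivalence, and then reduces to the free case, where Lemma~\ref{lem:defret} gives an explicit unstable equivalence $P(J\smallint\C')\we\hocolim_J VP\C'$ comparing Thomason's construction with the Grothendieck construction. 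The lax case is then handled separately by strictifying via $\hat\C=\hocolim_0\C$. This resolution argument is where the work lies; your proposal skips it.

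A minor point: the canonical functor $\C(x)\to\hocolim_J\C$, $X\mapsto 1[(x,X)]$, is lax symmetric monoidal, not strict, since $1[(x,X)]\oplus 1[(x,Y)]=2[(x,X),(x,Y)]\ne 1[(x,X\oplus Y)]$.
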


\begin{proof}
The stable case follows from \cite{Th3}*{4.1}, since homotopy
colimits of spectra preserve stable equivalences.
The unstable case follows by the same line of argument, see
\cite{Th3}*{p.~1635} for an overview.

First consider the strict case, when $F \: \C \we \D$ is a left lax
transformation and unstable equivalence of functors $J \to \strictnz$.
The doubly forgetful functor $W \: \strictnz \to \cat$ has a left
adjoint, the free functor $P \: \cat \to \strictnz$, with $P\C =
\coprod_{n\geq1} \widetilde{\Sigma_n} \times_{\Sigma_n} \C^{\times n}$,
where $\widetilde{\Sigma_n}$ is the translation category of the symmetric
group $\Sigma_n$.

The free--forgetful adjunction $(P,W)$ generates a simplicial
resolution $(PW)^{\bullet+1}\C = \{[q] \mapsto (PW)^{q+1}\C\}$ of $\C$
by free zeroless permutative categories.  See \cite{Th3}*{1.2} for details.
By Thomason's argument \cite{Th3}*{p.~1641--1644}, the augmentation
$(PW)^{\bullet+1}\C \to \C$ induces an unstable equivalence
$$
\hocolim_J V(PW)^{\bullet+1}\C \we \hocolim_J V\C \,,
$$
and similarly for $\D$ and $F$.  Hence it suffices to prove that
$\hocolim_J V(PW)^{q+1} F$ is an unstable equivalence, for each $q\geq0$.

Let $\C' = W(PW)^q\C$, $\D' = W(PW)^q\D$ be functors $J \to \cat$,
and $F' = W(PW)^qF$ the resulting left lax transformation $\C' \to
\D'$.  Then $F'$ is an unstable equivalence, by $q$ applications of
Lemma~\ref{lem:free_preserves_we} below.  We must prove that $VPF' \:
VP\C' \to VP\D'$ induces an unstable equivalence of homotopy colimits.
This follows from Lemma~\ref{lem:defret} below, the fact that
the Grothendieck construction $J \smallint F' \: J \smallint \C' \to J
\smallint \D'$ respects unstable equivalences, and one more application
of Lemma~\ref{lem:free_preserves_we}.

Finally consider the lax case, when $F \: \C \we \D$ is a left lax
transformation and unstable equivalence of functors $J \to \permnz$.
For each $x \in J$ let $\hat\C(x) = \hocolim_0 \C(x)$ be the homotopy
colimit of the functor $0 \to \permnz$ taking the unique object of $0$
to $\C(x)$.  By the universal property of $\hocolim_0$ this defines a
functor $\hat\C \: J \to \strictnz$, and a natural transformation $\C \to
V\hat\C$.  It is an unstable equivalence by~\cite{M4}*{4.3}.
Summation in the permutative categories $\C(x)$ induces a left lax
natural transformation $V\hat\C \to \C$, such that
the composite $\C \to V\hat\C \to \C$ equals the identity transformation.

By naturality of these constructions with respect to $F$, we see that
$F \: \C \to \D$ is a retract of $V\hat F \: V\hat\C \to V\hat\D$ as a
morphism in $\permnz^{\!J}$, where $\hat F \: \hat\C \we \hat\D$ is a left lax
transformation and unstable equivalence of functors $J \to \strictnz$.
By functoriality, $\hocolim_J F$ is a retract of $\hocolim_J V\hat F$,
which is an unstable equivalence by the first case of the proof applied
to $\hat F \: \hat\C \to \hat\D$.  It follows that $\hocolim_J F$ is
also an unstable equivalence.

The claim in the case of a constant diagram follows by the same argument.
\end{proof}

\begin{lem} \label{lem:free_preserves_we}
The free functor $P \: \cat \to \strictnz$ sends unstable equivalences
to unstable equivalences.
\end{lem}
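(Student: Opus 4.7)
The plan is to reduce everything to a standard Borel construction argument. By definition $P\C = \coprod_{n\geq 1} \widetilde{\Sigma_n}\times_{\Sigma_n} \C^{\times n}$. The first step is to identify its classifying space with the disjoint union of Borel constructions
$$
|P\C| \;\cong\; \coprod_{n\geq 1} E\Sigma_n \times_{\Sigma_n} |\C|^n,
$$
where $E\Sigma_n = |\widetilde{\Sigma_n}|$ is contractible with a free $\Sigma_n$-action. This uses two standard facts: that the nerve commutes with finite products (so $|\C^{\times n}| \cong |\C|^n$ and $|\widetilde{\Sigma_n}\times \C^{\times n}| \cong E\Sigma_n\times|\C|^n$) and that geometric realization commutes with quotients by group actions that act freely on objects of the category (which is the case here since $\Sigma_n$ acts freely on $\widetilde{\Sigma_n}$ and hence on the diagonal action).

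Having identified $|P\C|$ with a coproduct of Borel constructions, I next consider the induced map $|F|^n \colon |\C|^n \to |\D|^n$. Since $|F|$ is a homotopy equivalence, so is $|F|^n$, and it is obviously $\Sigma_n$-equivariant (non-equivariantly a homotopy equivalence is all we need). The standard comparison of the two homotopy fibrations
$$
|\C|^n \longrightarrow E\Sigma_n \times_{\Sigma_n}|\C|^n \longrightarrow B\Sigma_n \qquad\text{and}\qquad
|\D|^n \longrightarrow E\Sigma_n \times_{\Sigma_n}|\D|^n \longrightarrow B\Sigma_n
$$
together with the five lemma applied to their long exact sequences of homotopy groups (or, equivalently, a Serre spectral sequence comparison, or the observation that $E\Sigma_n\times(-)$ preserves ordinary homotopy equivalences and descends to a weak equivalence on the free quotients) shows that the induced map $E\Sigma_n\times_{\Sigma_n}|\C|^n \to E\Sigma_n\times_{\Sigma_n}|\D|^n$ is an unstable equivalence for each~$n$.

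Finally, since coproducts of homotopy equivalences of topological spaces are themselves homotopy equivalences, assembling these over all $n\geq 1$ yields that $|PF| \colon |P\C| \to |P\D|$ is an unstable equivalence, as required.

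The main obstacle is the bookkeeping in step one: namely, checking carefully that taking the nerve of $\widetilde{\Sigma_n}\times_{\Sigma_n}\C^{\times n}$ really does produce the Borel construction $E\Sigma_n\times_{\Sigma_n}|\C|^n$. Provided that identification is in hand, the rest is a direct application of equivariant homotopy theory on free $\Sigma_n$-spaces, and requires no further input beyond the assumption that $|F|$ is a homotopy equivalence.
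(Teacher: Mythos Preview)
Your proof is correct and follows essentially the same approach as the paper's: identify $|P\C|$ with the disjoint union of Borel constructions $\coprod_{n\geq1} E\Sigma_n \times_{\Sigma_n} |\C|^n$, and then use freeness of the $\Sigma_n$-action on $E\Sigma_n$ to conclude. The paper's proof is a two-line sketch recording exactly these two ingredients, whereas you have spelled out the fibration/five-lemma step that justifies why freeness is what makes the Borel construction preserve equivalences.
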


\begin{proof}
This follows from the natural homeomorphism of classifying spaces $|P\C|
\cong \coprod_{n\geq1} E\Sigma_n \times_{\Sigma_n} |\C|^{\times n}$
and the fact that $|\widetilde{\Sigma_n}| = E\Sigma_n$ is a free
$\Sigma_n$-space.
\end{proof}

\begin{lem} \label{lem:defret}
Let $\C' \: J \to \cat$ be any functor.  There is a natural unstable
equivalence
$$
P(J \smallint \C') \we \hocolim_J VP\C' \,.
$$
\end{lem}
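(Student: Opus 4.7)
The plan is to exhibit the equivalence as a strict categorical deformation retract: I construct strict symmetric monoidal functors $\Phi\colon P(J\smallint\C')\to\hocolim_J VP\C'$ and $\Psi\colon\hocolim_J VP\C'\to P(J\smallint\C')$ with $\Psi\circ\Phi=\id$, together with a natural transformation $\alpha\colon\Phi\circ\Psi\Rightarrow\id$. Naturality of $\alpha$ yields a homotopy $|\Phi|\circ|\Psi|\simeq\id$ on classifying spaces, and combined with $|\Psi|\circ|\Phi|=\id$ this makes $|\Phi|$ a homotopy equivalence, which is exactly the definition of $\Phi$ being an unstable equivalence. Both $\Phi$ and $\Psi$ will be manifestly natural in $\C'$.

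On objects, $\Phi$ sends $[(x_1,X_1),\ldots,(x_n,X_n)]\in P(J\smallint\C')$ to the ``all singletons'' expression $n[(x_1,[X_1]),\ldots,(x_n,[X_n])]\in\hocolim_J VP\C'$, where $[X_i]$ is the length-one word in $P\C'(x_i)$. A morphism of $P(J\smallint\C')$ consists of a permutation $\sigma$ together with components $(k_i,f_i)\colon(x_i,X_i)\to(y_{\sigma(i)},Y_{\sigma(i)})$; its image under $\Phi$ is the morphism of $\hocolim_J VP\C'$ with surjection $\sigma$, $J$-components $k_i$, and singleton-to-singleton components $\varrho_j=f_{\sigma^{-1}(j)}$. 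In the other direction, $\Psi$ flattens: an object $n[(x_1,Z_1),\ldots,(x_n,Z_n)]$ with $Z_i=[X_{i,1},\ldots,X_{i,k_i}]$ is sent to $[(x_i,X_{i,r})]_{(i,r)}$, ordered lexicographically, and a morphism $(\psi,\ell_i,\varrho_j)$ is sent to the morphism of $P(J\smallint\C')$ whose permutation on the flattened index sets combines $\psi$, the linear order on each fiber $\psi^{-1}(j)$, and the inner permutations $\tau_j\in\Sigma_{L_j}$ hidden inside each $\varrho_j\in P\C'(y_j)$, and whose componentwise data comes from the $\ell_i$ and the componentwise morphisms underlying the $\varrho_j$. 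Both $\Phi$ and $\Psi$ are strict symmetric monoidal, since addition on each side is concatenation.

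The equation $\Psi\circ\Phi=\id$ is immediate from the definitions, since $\Phi$ hits only all-singleton objects, which $\Psi$ unpacks back trivially; on morphisms one observes that the inner permutation of each singleton-valued $\varrho_j$ is forced to be the identity. To define $\alpha$, let $A=n[(x_1,Z_1),\ldots,(x_n,Z_n)]$ with $K=\sum_i k_i$, and set
$$
\alpha_A\colon\Phi\Psi(A)=K[(x_1,[X_{1,1}]),\ldots,(x_n,[X_{n,k_n}])]\longrightarrow A
$$
to be the morphism of $\hocolim_J VP\C'$ given by the surjection $\mathbf{K}\to\bfn$ collapsing the $k_i$ singletons above index $i$ to $i$, identity $J$-components, and the identity isomorphism $[X_{i,1}]\oplus\cdots\oplus[X_{i,k_i}]=Z_i\to Z_i$ in each $P\C'(x_i)$. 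Naturality of $\alpha$ with respect to a morphism $g=(\psi,\ell_i,\varrho_j)\colon A\to B$ follows by unwinding the composition rule of \cite{Th3}*{3.22}: both $g\circ\alpha_A$ and $\alpha_B\circ\Phi\Psi(g)$ unfold to the morphism of $\hocolim_J VP\C'$ whose surjection is $\psi$ composed with the collapse, whose $J$-morphisms on the flattened level are the $\ell_i$, and whose inner morphism is $\varrho_j$ after identifying the relevant direct sum of singletons with $\bigoplus_{i\in\psi^{-1}(j)}\C'(\ell_i)(Z_i)$. The main technical hurdle is precisely this matching, where one must align the surjection $\psi$ with the inner permutations $\tau_j$ to produce a single coherent bijection on the flattened index sets; once the bookkeeping is pinned down, every remaining verification is formal, and the triple $(\Phi,\Psi,\alpha)$ is in fact a reflective adjunction with $\Phi$ left adjoint to $\Psi$, unit the identity, and counit $\alpha$.
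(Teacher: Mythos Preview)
Your proposal is correct and follows the expected line: the paper itself does not give a proof but simply cites Thomason \cite{Th3}, where the argument (starting on p.~1637) establishes precisely this deformation retraction. Your explicit description of $\Phi$, $\Psi$, and the counit $\alpha$ matches the standard way of unpacking that result, so there is no genuine difference in approach---you have written out what the paper defers to the literature. One small remark: you should perhaps make explicit that $\Psi$ respects composition in $\hocolim_J VP\C'$, since Thomason's composition rule (\cite{Th3}*{3.22}) inserts a block permutation reordering the fibers of a composite surjection, and this must match the composition of the flattened permutations in $P(J\smallint\C')$; this is exactly the bookkeeping you flag as the ``main technical hurdle'', and it does go through, but it is the one place where a reader might want to see the indices pinned down.
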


\begin{proof}
Thomason proved this in \cite{Th3}.  There the statement appears in the
second paragraph on page~1639, in rather different--looking notation,
and the proof starts with the last paragraph on page~1637.
\end{proof}

\begin{lem} \label{lem:boekstedt_lemma}
Let $I$ be the category of finite sets and injective functions,
and let $\bfm \in I$.  If $\C \: I \to \permnz$ is a functor that
sends each $\vphi \: \bfm \to \bfn \in I$ to a stable
(resp.~unstable) equivalence $\C(\vphi) \: \C(\bfm) \to \C(\bfn)$,
then the canonical functor $\C(\bfm) \to \hocolim_I \C$ is a
stable (resp.~unstable) equivalence.
\end{lem}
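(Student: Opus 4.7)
The plan is to reduce to the under-category $\bfm\downarrow I$ and conclude by a homotopy cofinality argument.

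Let $u\:\bfm\downarrow I\to I$ be the projection $(\bfn,\vphi)\mapsto\bfn$. The pair $(\bfm,\id_\bfm)$ is an initial object of $\bfm\downarrow I$, so this category is contractible. Moreover, the pullback $u^*\C=\C\circ u$ sends every morphism to a stable (resp.~unstable) equivalence: for a morphism $f\:(\bfn,\vphi)\to(\bfn',\vphi')$ in $\bfm\downarrow I$ one has $\C(f)\circ\C(\vphi)=\C(\vphi')$, and both $\C(\vphi)$ and $\C(\vphi')$ are equivalences by hypothesis, so $\C(f)$ is an equivalence by 2-out-of-3.

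I would then assemble these observations as follows. The constant diagram $\mathbf{c}_\bfm\:\bfm\downarrow I\to\permnz$ at $\C(\bfm)$ admits a strict natural transformation $\mathbf{c}_\bfm\Rightarrow u^*\C$ whose component at $(\bfn,\vphi)$ is $\C(\vphi)\:\C(\bfm)\to\C(\bfn)$; naturality amounts to the identity $\C(f)\circ\C(\vphi)=\C(\vphi')$. By hypothesis this is a pointwise equivalence. Applying Lemma~\ref{lem:hocolim_is_a_homotopy_functor} twice---first to the pointwise equivalence $\mathbf{c}_\bfm\Rightarrow u^*\C$, and then to the constant-diagram part of the same lemma over the contractible category $\bfm\downarrow I$---one obtains
$$
\C(\bfm)\we\hocolim_{\bfm\downarrow I}\mathbf{c}_\bfm\we\hocolim_{\bfm\downarrow I}u^*\C.
$$

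It remains to show that the canonical map $\hocolim_{\bfm\downarrow I}u^*\C\to\hocolim_I\C$ is an equivalence. This is a homotopy cofinality statement for $u$. The standard way to settle it is to show that for each $\bfn_0\in I$ the comma category $\bfn_0\downarrow u$---whose objects are triples $(\bfk,\vphi\:\bfm\to\bfk,g\:\bfn_0\to\bfk)$ with $\vphi$ and $g$ injective, and whose morphisms are injections $\bfk\to\bfk'$ intertwining both structure maps---is contractible, and then to invoke Thomason's analog of Quillen's Theorem~A at the level of classifying spaces (resp.~spectra). The contractibility is the combinatorial heart of B\"okstedt's original argument: one exploits the filteredness of the disjoint-union operation on $I$ by comparing $\id_{\bfn_0\downarrow u}$ with an ``enlargement'' functor $(\bfk,\vphi,g)\mapsto(\bfk+\bfm,\dots)$ via an explicit zigzag of natural transformations, landing in a subcategory with a weakly initial object.

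The principal obstacle is this final cofinality step: everything else is a formal consequence of the homotopy invariance already established in Lemma~\ref{lem:hocolim_is_a_homotopy_functor}, but the contractibility of the comma categories $\bfn_0\downarrow u$ is a genuine combinatorial fact about $I$ that must be checked by hand. Note also that the argument applies uniformly in both the stable and unstable case, since 2-out-of-3 and Thomason's cofinality are valid for both classes of equivalence.
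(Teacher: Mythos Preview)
Your approach is different from the paper's and more detailed, but it has a gap at exactly the point you flag as the ``principal obstacle.''  The paper's proof is a two-line citation: B{\"o}kstedt's lemma is a known fact for ordinary homotopy colimits in simplicial sets (hence in $\cat$), and the free permutative resolution $(PW)^{\bullet+1}$ together with Lemma~\ref{lem:defret} transfers it to Thomason's $\hocolim_J$ in $\permnz$.  You instead reconstruct B{\"o}kstedt's argument from scratch via the under-category $\bfm\downarrow I$, which is a perfectly legitimate strategy and makes the combinatorial content visible.

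The gap is this: your final cofinality step requires that $\hocolim_{\bfm\downarrow I}u^*\C\to\hocolim_I\C$ be an equivalence in $\permnz$, not merely in $\cat$.  You invoke ``Thomason's analog of Quillen's Theorem~A,'' but no such statement for the \emph{permutative} homotopy colimit has been established in the paper, and it is not obvious: Thomason's $\hocolim_J$ is not the Grothendieck construction on the underlying categories, so contractibility of the comma categories $\bfn_0\downarrow u$ does not immediately give what you need.  In the stable case you are saved by $\Spt\hocolim_J\simeq\hocolim_J\Spt$, after which ordinary spectrum-level cofinality applies.  In the unstable case, however, the way to push cofinality through is precisely the free-resolution maneuver the paper uses: resolve by $(PW)^{\bullet+1}$, apply Lemma~\ref{lem:defret} to identify each stage with $P$ of a Grothendieck construction, and use $\cat$-level cofinality there.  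So your argument and the paper's converge at the same technical device; you have simply moved it from the outer wrapper to the inner step without saying so.
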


\begin{proof}
This is a weak version of B{\"o}kstedt's lemma \cite{Bo}*{9.1}, which
holds for homotopy colimits in $\cat$ since it holds for homotopy colimits
in simplicial sets.  By the argument above, using the resolution by free
permutative categories, it also holds in $\permnz$.
\end{proof}

\subsection{The case with zero} \label{sec:withzero}
We shall need a version of the homotopy colimit for diagrams of
permutative categories with zero.  Thomason comments that such a homotopy
colimit with zero is not a homotopy functor, unless the category is
``well based''.  Hence we must derive our functor to get a homotopy
invariant version.  We do this by means of another simplicial
resolution, this time generated by the free--forgetful adjunction
between permutative categories with and without zeros.

The functor $R \: \strict \to \strictnz$ that forgets the special role
of the zero object has a left adjoint $L \: \strictnz \to \strict$, given
by adding a disjoint zero object: $L\N = \N_+$ for $\N \in \strictnz$.
Likewise, the forgetful functor $R' \: \perm^J \to \permnz^{\!J}$ has a left
adjoint $L' \: \permnz^{\!J} \to \perm^J$, given by adding disjoint zeros
pointwise: $L'\C \: x \mapsto \C(x)_+$ for $\C \: J \to \permnz$
and all $x \in J$.

Let $\strictiz \subset \strict$ be the full subcategory generated by
objects of the form $L\N = \N_+$, \ie, the permutative categories with
an \emph{isolated zero} object.  Similarly, let $\permiz^{\!J} \subset
\perm^J$ be the full subcategory generated by objects of the form $L'\C
= \C_+$.

In the statement and proof of following lemma we omit the forgetful
functors $R$ and $R'$ from the notation, and write $\N_+$ and $\C_+$
for $L\N$ and $L'\C$, respectively.
Note that $\Delta V (\N_+) = \Delta V(\N)_+$, where $\Delta V \:
\strictnz \to \permnz^{\!J}$ is as in subsection~\ref{sec:zeroless}.

\begin{lem}
The functor $(\Delta V)_{\iz} \: \strictiz \to \permiz^{\!J}$, taking $\N_+$ to
the constant diagram $x \mapsto \N_+$ for $x \in J$, has a left adjoint
$\hocolimiz_J \: \permiz^{\!J} \to \strictiz$, satisfying
$$
\hocolimiz_J (\C_+) = (\hocolim_J \C)_+
$$
for all zeroless diagrams $\C \: J \to \permnz$.
\end{lem}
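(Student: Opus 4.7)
My plan is to construct the adjunction by composing three previously established adjunctions: the free--forgetful pair $L\dashv R$ between $\strictnz$ and $\strict$ on the codomain side, Thomason's adjunction $\hocolim_J\dashv \Delta V$ from subsection~\ref{sec:zeroless} in the middle, and the pointwise free--forgetful pair $L'\dashv R'$ between $\permnz^{\!J}$ and $\perm^J$ on the domain side. These three adjunctions fit together coherently precisely because $L$, $\Delta V$, and $L'$ commute up to canonical identifications, as the paper already records with the equation $\Delta V(\N_+)=\Delta V(\N)_+$ (modulo the suppressed forgetfuls $R$ and $R'$).

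Concretely, for $\C\in\permnz^{\!J}$ (so $\C_+=L'\C\in\permiz^{\!J}$) and $\N\in\strictnz$ (so $\N_+=L\N\in\strictiz$), I would establish the natural chain of bijections
\begin{align*}
\strictiz((\hocolim_J \C)_+, \N_+)
  & = \strict(L(\hocolim_J \C), L\N) \\
  & \cong \strictnz(\hocolim_J \C, \N_+) \\
  & \cong \permnz^{\!J}(\C, \Delta V(\N_+)) \\
  & \cong \perm^J(L'\C, (\Delta V)_{\iz}(\N_+)) \\
  & = \permiz^{\!J}(\C_+, (\Delta V)_{\iz}(\N_+)),
\end{align*}
where the outer equalities use that $\strictiz\subset\strict$ and $\permiz^{\!J}\subset\perm^J$ are full subcategories, while the three intermediate bijections apply $L\dashv R$, Thomason's $\hocolim_J\dashv \Delta V$, and $L'\dashv R'$, respectively. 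The last of these requires the identification $R'((\Delta V)_{\iz}(\N_+))=\Delta V(\N_+)$ already noted in the paper.

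Since every object of $\permiz^{\!J}$ has the form $\C_+$ for an essentially unique $\C\in\permnz^{\!J}$ (obtained by discarding the distinguished zero), the formula $\hocolimiz_J(\C_+)=(\hocolim_J \C)_+$ determines the functor on objects, and the naturality of the chain above in both $\C$ and $\N$ simultaneously pins down the action on morphisms and exhibits $\hocolimiz_J$ as a left adjoint to $(\Delta V)_{\iz}$.

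The only real obstacle is notational bookkeeping: tracking which forgetful functor is silently applied at each step, and confirming that the objects produced after each bijection genuinely lie in $\strictiz$ or $\permiz^{\!J}$, so that no adjustment is needed to remain inside these full subcategories. No new homotopy-theoretic input is required beyond the zeroless construction of $\hocolim_J$ already developed in subsection~\ref{sec:zeroless}.
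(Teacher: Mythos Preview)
Your proposal is correct and takes essentially the same approach as the paper: the identical chain of natural bijections, built from the three adjunctions $L\dashv R$, $\hocolim_J\dashv\Delta V$, and $L'\dashv R'$, is exactly what the paper uses to verify the adjunction property. The only cosmetic difference is that the paper first writes out explicitly how $\hocolimiz_J$ acts on a morphism $F\colon\C_+\to\D_+$ (by pushing $F$ through the same bijections with $\D_+$ in place of $\N_+$ and composing with the unit $\eta_+$), whereas you appeal directly to the general principle that a natural bijection of hom-sets determines the left adjoint on morphisms; both arrive at the same functor.
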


\begin{proof}
To define the functor $\hocolimiz_J$, we must specify a strict symmetric
monoidal functor
$$
\hocolimiz_J  F \: (\hocolim_J \C)_+ \lra (\hocolim_J \D)_+
$$
for each left lax transformation $F \: \C_+ \to \D_+$.
The morphism
$$
\eta_+ \: \D_+ \to (\Delta V(\hocolim_J \D ))_+
= \Delta V((\hocolim_J \D)_+) \,,
$$
where $\eta \: \id \to \Delta V \circ \hocolim_J$ is the adjunction
unit, induces a function
\begin{align*}
\perm^J(\C_+, \D_+) &\cong \permnz^{\!J}(\C, \D_+)
\to \permnz^{\!J}(\C, \Delta V((\hocolim_J \D)_+)) \\
&\cong \strictnz(\hocolim_J \C, (\hocolim_J D)_+)
\cong \strict((\hocolim_J \C)_+, (\hocolim_J D)_+) \,.
\end{align*}
We declare the image of $F \: \C_+ \to \D_+$ to be $\hocolimiz_J
F$.  A diagram chase shows that $\hocolimiz_J (GF) = \hocolimiz_J G
\circ \hocolimiz_J F$ for each $G \: \D_+ \to \E_+$, so $\hocolimiz_J$
is a functor.

The adjunction property follows from the chain of natural
bijections
\begin{align*}
\strict(\hocolimiz_J(\C_+), \N_+)
&= \strict((\hocolim_J \C)_+, \N_+)
	\cong \strictnz(\hocolim_J \C, \N_+) \\
&\cong \permnz^{\!J}(\C, \Delta V(\N_+))
	= \permnz^{\!J}(\C, (\Delta V)_{\iz}(\N_+)) \\
&\cong \perm^J(\C_+, (\Delta V)_{\iz}(\N_+)) \,.  \qedhere
\end{align*}
\end{proof}

\begin{defn}
For each permutative category with zero $\M \in \perm$ let $Z\M \in
\permiz^{\Delta^{op}} \subset \perm^{\Delta^{op}}$ be the simplicial
object in permutative categories with isolated zeroes given by
$$
[q] \mapsto Z_q\M = (LR)^{q+1}(\M) \,.
$$
The face and degeneracy maps are induced by the adjunction counit $LR
\to \id$ and unit $\id \to RL$, as usual.  The counit also induces
a natural augmentation map $\epsilon \: Z\M \to \M$ of simplicial
permutative categories with zero, where $\M$ is viewed as a constant
simplicial object.
\end{defn}

\begin{lem} \label{lem:epsunstable}
Let $\M$ be a permutative category.  The augmentation map $\epsilon \:
Z\M \we \M$ is an unstable equivalence.
\end{lem}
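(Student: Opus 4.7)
The plan is to apply the classical ``extra degeneracy'' argument for monadic resolutions. The key observation is that the forgetful functor $R \: \perm \to \permnz$ acts trivially on underlying categories: for any $\N \in \perm$, the categories $\N$ and $R\N$ are literally the same, they only differ in which distinguished structure is remembered. In particular $|R\N| = |\N|$ as spaces, and so $|Z\M|$ agrees with $|R(Z\M)|$ and $|\M|$ with $|R\M|$ under the evident comparisons. It therefore suffices to prove that the augmentation $R\epsilon \: R(Z\M) \to R\M$ induces a homotopy equivalence on realizations.

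Next I would exploit the standard fact that, for an adjunction $(L \dashv R)$ with unit $\eta \: \id \to RL$, the bar resolution $(LR)^{\bullet+1}\M \to \M$ becomes a split augmented simplicial object after applying $R$. Concretely, setting $T = RL$, we have $R(LR)^{q+1}\M = T^{q+1}R\M$, and the simplicial augmented object
$$
\bigl( [q] \mapsto T^{q+1}R\M \bigr) \lra R\M
$$
admits an extra degeneracy $s_{-1}$ at each level, defined by $\eta$ applied to $T^{q+1}R\M$. A direct verification of the simplicial identities (using that the counit--unit triangle equations hold) shows that these extra degeneracies are compatible with the existing faces and degeneracies, making the augmented simplicial object $R(Z\M) \to R\M$ a split augmented simplicial object in $\permnz$.

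Taking the bisimplicial nerve and its diagonal, the extra degeneracies pass through to produce a simplicial contracting homotopy of the augmented simplicial set $N_\bullet R(Z\M) \to N_\bullet R\M$. Hence $|R\epsilon| \: |R(Z\M)| \to |R\M|$ is a homotopy equivalence. Combined with the identifications of the previous paragraph, this yields that $|\epsilon| \: |Z\M| \to |\M|$ is a homotopy equivalence, i.e., $\epsilon$ is an unstable equivalence.

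The only subtlety, which I regard as the main point to be careful about rather than a true obstacle, is matching the two notions of geometric realization for a simplicial (permutative) category: one takes nerves levelwise to get a bisimplicial set and then passes to the diagonal. Extra degeneracies in the external simplicial direction persist in the diagonal, so the contracting homotopy argument really does produce a homotopy equivalence of classifying spaces. All the remaining checks are formal consequences of the adjunction $(L \dashv R)$ and the triangle identities.
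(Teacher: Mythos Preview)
Your proposal is correct and is essentially the same argument as the paper's: both apply $R$, use the adjunction unit to produce extra degeneracies (equivalently, a simplicial homotopy inverse) for the augmented simplicial object $R(Z\M) \to R\M$, and then conclude by noting that classifying spaces depend only on the underlying category. You have simply spelled out in detail what the paper compresses into two sentences.
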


\begin{proof}
The map $R\epsilon \: RZ\M \to R\M$ of simplicial zeroless permutative
categories admits a simplicial homotopy inverse, induced by the adjunction
unit.  Hence the map of classifying spaces $|\epsilon| \: |Z\M| \to |\M|$
admits a homotopy inverse, since the classifying space only depends on
the underlying category.
\end{proof}

We extend $Z$ pointwise to define a simplicial resolution $\epsilon \:
Z\C \we \C$ for any $\C \: J \to \perm$, with $Z\C \: x \mapsto Z\C(x)$
for all $x\in J$.  This allows us to define a derived homotopy colimit
for permutative categories with zero.

\begin{defn}
The \emph{derived homotopy colimit}
$$
\Dhocolim_J \: \perm^J \to \strictiz^{\Delta^{op}} \subset
	\strict^{\Delta^{op}}
$$
is defined by
$$
\Dhocolim_J \C = \hocolimiz_J (Z\C)
= \{ [q] \mapsto \hocolimiz_J (LR)^{q+1}\C \} \,.
$$
\end{defn}

The construction deserves its name:

\begin{lem} \label{lem:dhocolim_is_derived}
Let $\C \to \D$ be a stable (resp.~unstable) equivalence
in $\perm^J$.  Then $Z_q \C \to Z_q \D$ is a
stable (resp.~unstable) equivalence for each $q\geq0$, so the
induced functor
$$
\Dhocolim_J \C \lra \Dhocolim_J \D
$$
is a stable (resp.~unstable) equivalence, too.
\end{lem}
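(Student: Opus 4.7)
The plan is to prove the levelwise claim first---that $Z_q F \colon Z_q \C \to Z_q \D$ is a stable (resp.\ unstable) equivalence for each $q \geq 0$---and then combine this with the adjunction formula $\hocolimiz_J(\C_+) = (\hocolim_J \C)_+$ of the preceding lemma together with the zeroless hocolim preservation result of Lemma~\ref{lem:hocolim_is_a_homotopy_functor} to obtain the $\Dhocolim$ statement.

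For the levelwise claim, since $Z_q = (LR)^{q+1}$ is applied pointwise in $J$, it suffices to show that both $L$ and $R$ preserve stable and unstable equivalences. The functor $R \colon \perm \to \permnz$ only forgets the designation of the zero object, leaving the underlying category and permutative structure unchanged; it therefore preserves unstable equivalences immediately, and preserves stable equivalences because $\Spt$ is defined through the underlying symmetric monoidal data. The functor $L \colon \permnz \to \perm$ adjoins a disjoint zero; on classifying spaces this just adds a disjoint basepoint, which preserves unstable equivalences. For stable equivalences one uses that the new zero object contributes only a contractible summand to the associated spectrum, so $\Spt L\N$ is naturally stably equivalent to $\Spt \N$.

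For the $\Dhocolim$ statement, write
\[
(LR)^{q+1}\C \;=\; L\bigl((RL)^q R\C\bigr),
\]
where $(RL)^q R\C$ is a diagram in $\permnz^J$. Combined with the identity $\hocolimiz_J(L'\E) = L(\hocolim_J \E)$, this yields
\[
\hocolimiz_J Z_q \C \;=\; \bigl(\hocolim_J (RL)^q R\C\bigr)_+.
\]
If $F \colon \C \to \D$ is a pointwise stable (resp.\ unstable) equivalence in $\perm^J$, then iterating the levelwise claim shows that $(RL)^q R F$ is a pointwise stable (resp.\ unstable) equivalence in $\permnz^J$; Lemma~\ref{lem:hocolim_is_a_homotopy_functor} then promotes this to an equivalence in $\strictnz$, and adjoining the disjoint zero preserves the property in $\strictiz$. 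Consequently the induced map $\Dhocolim_J \C \to \Dhocolim_J \D$ is a stable (resp.\ unstable) equivalence in every simplicial degree, which is the desired conclusion.

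The main obstacle is the stable-equivalence half of the levelwise claim, namely the precise interaction of $\Spt$ with adjoining or forgetting a disjoint zero object; once that input is in place, the remainder of the argument is a routine combination of the adjunction formula with the homotopy invariance of Thomason's zeroless homotopy colimit.
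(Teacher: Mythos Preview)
Your proof is correct and follows essentially the same approach as the paper. The paper treats the composite $LR$ directly (observing that it adds a disjoint basepoint on classifying spaces and that the counit $LR \to \id$ is a stable equivalence by \cite{Th3}*{2.1}), whereas you split this into separate arguments for $L$ and $R$; and the paper leaves the passage from the levelwise $Z_q$ statement to the $\Dhocolim$ statement implicit in the word ``so'', whereas you spell it out via the adjunction formula $\hocolimiz_J(\C_+)=(\hocolim_J\C)_+$ and Lemma~\ref{lem:hocolim_is_a_homotopy_functor}, which is exactly the intended justification.
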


\begin{proof}
The functor $LR$ adds a disjoint base point to the classifying space,
and the counit $LR \to \id$ induces a stable equivalence of spectra
\cite{Th3}*{2.1}.  Hence $LR$ preserves both stable and unstable
equivalences.  Iterating $(q+1)$ times yields the assertion for $Z_q$.
\end{proof}

\begin{lem}
Let $I$ be the category of finite sets and injective functions,
and let $\bfm \in I$.  If $\C \: I \to \perm$ is a functor that
sends each $\vphi \: \bfm \to \bfn \in I$ to a stable
(resp.~unstable) equivalence $\C(\vphi) \: \C(\bfm) \to \C(\bfn)$,
then the canonical functors
$$
\C(\bfm) \ew Z\C(\bfm) \lra \Dhocolim_I \C
$$
are stable (resp.~unstable) equivalences.
\end{lem}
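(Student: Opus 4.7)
The plan is to establish the two maps separately, handling the first by the explicit augmentation and the second by reducing the derived construction to the zeroless Thomason homotopy colimit one simplicial degree at a time, and then invoking B\"okstedt's lemma.

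The left map $\epsilon\: Z\C(\bfm) \to \C(\bfm)$ is precisely the augmentation from Lemma~\ref{lem:epsunstable} applied pointwise at $\bfm$, hence an unstable equivalence (and therefore also a stable equivalence, since the underlying functor is an unstable equivalence).

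For the right map, the strategy is to work simplicial degree by simplicial degree. Recall that $\Dhocolim_I \C = \{[q] \mapsto \hocolimiz_I (LR)^{q+1}\C\}$. Writing $(LR)^{q+1} = L(RL)^q R$ and setting $\C^{(q)} = (RL)^q R\C \: I \to \permnz$, we have $Z_q\C = L\C^{(q)} = \C^{(q)}_+$ pointwise on $I$. By the formula from the lemma preceding Lemma~\ref{lem:epsunstable},
$$
\hocolimiz_I Z_q\C \;=\; \hocolimiz_I (\C^{(q)}_+) \;=\; (\hocolim_I \C^{(q)})_+ \,,
$$
and the canonical map $Z_q\C(\bfm) \to \hocolimiz_I Z_q\C$ is obtained by adjoining a disjoint zero to the canonical map $\C^{(q)}(\bfm) \to \hocolim_I \C^{(q)}$ in $\strictnz$. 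Since $L$ (adding a disjoint zero) and $R$ (forgetting the special role of the zero) preserve both stable and unstable equivalences, as exploited in the proof of Lemma~\ref{lem:dhocolim_is_derived}, the functor $\C^{(q)}$ still sends each morphism $\vphi\: \bfm \to \bfn$ to a stable (resp.\ unstable) equivalence in $\permnz$. B\"okstedt's lemma (Lemma~\ref{lem:boekstedt_lemma}) then gives that $\C^{(q)}(\bfm) \to \hocolim_I \C^{(q)}$ is a stable (resp.\ unstable) equivalence in $\strictnz$, and adding a disjoint zero preserves the equivalence in $\strict$.

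Thus $Z\C(\bfm) \to \Dhocolim_I \C$ is a simplicial map which is a stable (resp.\ unstable) equivalence at every simplicial level $q$, hence a stable (resp.\ unstable) equivalence of simplicial permutative categories. The main subtlety is to keep track of the zeroless versus zero-containing categories and to verify that the two adjoint functors $L$ and $R$ indeed preserve both flavors of equivalence, so that the hypothesis on $\C$ transports to each $\C^{(q)}$; once that bookkeeping is in place, the result is an immediate combination of Lemma~\ref{lem:epsunstable}, Lemma~\ref{lem:boekstedt_lemma}, and the explicit description of $\hocolimiz_I$ on diagrams of the form $\C_+$.
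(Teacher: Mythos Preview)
Your argument is correct and follows essentially the same approach as the paper, which simply cites Lemmas~\ref{lem:boekstedt_lemma} and~\ref{lem:epsunstable}; you have spelled out the bookkeeping (the decomposition $Z_q\C = L\C^{(q)}$, the identification $\hocolimiz_I(\C^{(q)}_+) = (\hocolim_I \C^{(q)})_+$, and the preservation of equivalences by $L$ and $R$) that the paper leaves implicit.
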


\begin{proof}
This follows from Lemmas~\ref{lem:boekstedt_lemma}
and ~\ref{lem:epsunstable}.
\end{proof}


\section{The homotopy colimit of a graded bipermutative category}
\label{sec:hocolimgradedbps}

We are now ready for a key proposition.

\begin{prop}\label{prop:hocolim of rigs}
Let $J$ be a permutative category, and let $\C^\bullet$ be a $J$-graded
\bp.  Then $\Dhocolim_J\C^\bullet$ is a simplicial \bp, and
$$
\C^0 \ew Z\C^0 \lra \Dhocolim_J \C^\bullet
$$
are lax morphisms of simplicial \bps.  The same statements hold when
replacing ``bipermutative'' by ``strictly bimonoidal''.

Furthermore, for each $x\in J$, the canonical functors
$$
\C^x \ew Z\C^x \lra \Dhocolim_J \C^\bullet
$$
are maps of $Z\C^0$-modules.
\end{prop}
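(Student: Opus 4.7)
The plan is to build the bipermutative structure on $\Dhocolim_J \C^\bullet$ componentwise from that on $\C^\bullet$, working at each simplicial degree. The additive permutative structure is already supplied by the (derived) homotopy colimit from section~\ref{sec:hocolim}, so the task reduces to equipping it with a multiplication that satisfies strict \rightdistributivity and the twist/coherence conditions. I would first construct the structure on $\hocolim_J \C^\bullet$ in the zeroless setting, then transport it along the resolution $Z = (LR)^{\bullet+1}$; the latter is possible because $LR$ preserves $J$-graded bipermutative structures, since the added disjoint zero is annihilated by the graded~$\otimes$ thanks to axiom~\eqref{igradedzeros}.

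The core construction is a pairing
$$
\otimes \: \hocolim_J \C \times \hocolim_J \C \to \hocolim_J \C
$$
defined on objects by
$$
n[(x_i,X_i)] \otimes m[(y_j,Y_j)] = nm\bigl[(x_i + y_j,\, X_i \otimes Y_j)\bigr]_{(i,j)}
$$
in lexicographic order on pairs, using the graded multiplication of $\C^\bullet$ entrywise; the unit is $1[(0,1_{\C^0})]$. On a pair of morphisms $(\psi,\ell_i,\vrho_j)$ and $(\psi',\ell'_{i'},\vrho'_{j'})$ of $\hocolim_J \C$ the underlying surjection of the product is $\psi\times\psi'$ and the morphisms in $J$ are the sums $\ell_i + \ell'_{i'}$, while the required structure morphism in each $\C(y_j + y'_{j'})$ is obtained by using strict \rightdistributivity together with \leftdistributivity to rewrite
$$
\bigoplus_{(i,i') \in (\psi\times\psi')^{-1}(j,j')} X_i \otimes X'_{i'}
$$
as $\bigl(\bigoplus_i X_i\bigr) \otimes \bigl(\bigoplus_{i'} X'_{i'}\bigr)$, followed by $\vrho_j \otimes \vrho'_{j'}$.

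Strict \rightdistributivity on the hocolim is then inherited directly: since $\oplus$ in $\hocolim_J$ is concatenation of lists and $\otimes$ is defined componentwise, both sides of axiom~\eqref{igradedrightdist} produce the same list with the same surjection. The twist $\gamma_\otimes$ is the entrywise graded twist combined with the block shuffle of $nm$ pairs into $mn$ pairs; its hexagon and the pentagon for distributivity follow pointwise from the corresponding axioms of $\C^\bullet$. In the strictly bimonoidal case one drops the twist and verifies axiom~(\ref{igradedleftdist}') in the same componentwise manner. Levelwise this yields a (simplicial) bipermutative structure, and the face/degeneracy maps of $Z$ are automatically lax morphisms of bipermutative categories by construction of $LR$.

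For the lax morphism $\C^0 \to \hocolim_J \C^\bullet$ — and hence $Z\C^0 \to \Dhocolim_J\C^\bullet$ after applying $Z$ — send $X \in \C(0)$ to $1[(0,X)]$; the map $\eta_\otimes$ is the identity (since $1[(0,X)] \otimes 1[(0,Y)] = 1[(0, X\otimes Y)]$ by construction) while $\eta_\oplus$ is induced by the unique codiagonal $\mathbf{2} \to \mathbf{1}$, and respect of distributivity is again a componentwise check. The $Z\C^0$-module structure on each $\C^x \to \Dhocolim_J \C^\bullet$ is inherited the same way from the graded pairing $\C(0) \times \C(x) \to \C(x)$, with $X \in \C(x)$ mapping to $1[(x,X)]$. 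The main technical obstacle will be verifying functoriality of the pairing $\otimes$ on the hocolim: the explicit rewriting of the sum-of-products $\bigoplus X_i \otimes X'_{i'}$ as a product-of-sums requires a well-ordered application of $\dl$ and $\dr$, and the fact that the resulting $\vrho$-data is well-defined and compatible with composition of morphisms in $\hocolim_J$ is exactly where coherence — in particular the pentagon axiom~\eqref{igradedpentagon} — gets used.
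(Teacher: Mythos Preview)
Your proposal is correct and follows essentially the same approach as the paper: the paper isolates the zeroless construction on $\hocolim_J \C^\bullet$ as a separate lemma (Lemma~\ref{lem:zeroless}), proves there in detail the functoriality of the $\otimes$-pairing via the two-branch diagram and the generalized pentagon equation you anticipate, and then derives the proposition by applying $(LR)^{\bullet+1}$ levelwise exactly as you outline. Your identification of the pentagon axiom~\eqref{igradedpentagon} as the crux of functoriality, and of the codiagonal $\mathbf2\to\mathbf1$ as supplying $\eta_\oplus$ for the lax morphism $\C^0\to\hocolim_J\C^\bullet$, matches the paper precisely.
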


\begin{proof}
Recall the adjoint pair $(L',R')$ from subsection~\ref{sec:withzero}.
If $\C^\bullet$ is a $J$-graded $\bp$, then so is $L'R'\C^\bullet =
\C^\bullet_+$, and $Z\C^\bullet$ becomes a simplicial $J$-graded $\bp$.
By Lemma~\ref{lem:zeroless}, which we will prove below, we get that
$\hocolim_J R'(L'R')^q\C^\bullet$ becomes a zeroless \bp for each $q\geq0$.
Hence $\hocolimiz_J Z_q\C^\bullet = L\hocolim_J R'(L'R')^q\C^\bullet$ is a
\bp, and all the simplicial structure maps are lax morphisms of \bps.
Therefore $\Dhocolim_J \C^\bullet$ becomes a simplicial $\bp$.

Likewise, for each $q\geq0$, Lemma~\ref{lem:zeroless} below guarantees that
$$
Z_q\C^0\to \hocolimiz_J Z_q\C^\bullet
$$
is a lax morphism of \bps and that each
$$
Z_q\C^x\to \hocolimiz_J Z_q\C^\bullet
$$
is a map of $Z_q\C^0$-modules, so we are done by functoriality.
\end{proof}

We omit the forgetful functors $R$ and $R'$ in the statement and
proof of the following lemma, which contains the most detailed
diagram chasing required in this paper.

\begin{lem}\label{lem:zeroless}\label{lem:zerolesssym}
Let $J$ be a permutative category.  If $\C^\bullet$ is a $J$-graded
\bp, then Thomason's homotopy colimit of permutative categories
$\hocolim_J \C^\bullet$ is a zeroless \bp.  The canonical functor
$\C^0 \to \hocolim_J \C^\bullet$ is a lax morphism of zeroless \bps.
Furthermore, for each $x\in J$, the canonical functor
$$
\C^x \lra \hocolim_J \C^\bullet
$$
is a map of zeroless $\C^0$-modules.

If $\C^\bullet$ is a $J$-graded \sbc, then $\hocolim_J \C^\bullet$
is a zeroless \sbc with a lax morphism of zeroless \sbcs $\C^0 \to
\hocolim_J \C^\bullet$, and zeroless $\C^0$-module maps $\C^x \to
\hocolim_J \C^\bullet$.
\end{lem}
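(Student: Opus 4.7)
The plan is to transport the graded multiplication of $\C^\bullet$ entrywise to the Grothendieck-style presentation of $\hocolim_J \C^\bullet$. On objects I would set
\[
n[(x_i, X_i)]_{i=1}^n \,\otimes\, m[(y_j, Y_j)]_{j=1}^m \;=\; nm\bigl[(x_i + y_j,\ X_i \otimes Y_j)\bigr]_{(i,j)},
\]
using the lexicographic (``row-major'') order on $\bfn \otimes \bfm$ in which $j$ varies fastest; the unit is $1[(0, 1_{\C^0})]$, which is strictly two-sided by axiom~\eqref{igradedunit}. On morphisms $(\psi, \ell_i, \vrho_j)$ and $(\psi', \ell'_{i'}, \vrho'_{j'})$ the product has underlying surjection $\psi\otimes\psi'\colon\bfn\bfn' \to \bfm\bfm'$ in the sense of Example~\ref{ex:bipsets}, $J$-component $\ell_i + \ell'_{i'}$, and component in $\C(y_j+y'_{j'})$ given by the composite
\[
\bigoplus_{(i,i')}\bigl(\C(\ell_i)(X_i)\otimes \C(\ell'_{i'})(X'_{i'})\bigr) \xrightarrow{\ \cong\ } \Bigl(\bigoplus_i \C(\ell_i)(X_i)\Bigr)\otimes\Bigl(\bigoplus_{i'} \C(\ell'_{i'})(X'_{i'})\Bigr) \xrightarrow{\ \vrho_j\otimes \vrho'_{j'}\ } Y_j \otimes Y'_{j'},
\]
whose first arrow is built from strict $\dr$ followed by iterated $\dl$ in $\C^{y_j+y'_{j'}}$.

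Next I would verify the axioms of a zeroless bipermutative category. Functoriality, strict associativity, and strict unitality of $\otimes$ follow pointwise from the corresponding properties in $\C^\bullet$ together with associativity of concatenation. Strict right distributivity~\eqref{igradedrightdist} is forced by the row-major convention combined with Thomason's strict concatenation. Left distributivity~\eqref{igradedleftdist} is no longer an identity: the discrepancy between the orderings $\bigoplus_i \bigoplus_j$ and $\bigoplus_j \bigoplus_i$ is exactly the shuffle $\xi$ of Example~\ref{ex:bipsets}, so I would take $\dl$ in $\hocolim_J\C^\bullet$ to be the morphism whose underlying surjection is this shuffle bijection, with identity data on the $J$- and $\C$-components. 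In the bipermutative case the multiplicative twist $\gamma_\otimes$ is assembled from the index shuffle $\chi(n,m)$, the levelwise $\chi^{y_j, x_i}$ in $J$, and the componentwise $\gamma_\otimes$ of $\C^\bullet$. The main obstacle will be checking the coherence axioms~\eqref{igradedmulttwist}--\eqref{igradedpentagon} and the naturality of $\dl$ and $\gamma_\otimes$ with respect to arbitrary morphisms of $\hocolim_J\C^\bullet$; after identifying terms these reduce to the componentwise axioms in $\C^{x+y}$ or $\C^{x+y+z}$ together with manipulations of shuffles on finite sets. In particular, the pentagon~\eqref{igradedpentagon} reduces to the pentagon in $\C^{x+y}$ combined with the identity $\xi \circ \xi = \id$ of index-shuffles.

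For the inclusions, $\C^0 \to \hocolim_J \C^\bullet$ sends $X$ to $1[(0,X)]$ and a morphism $f$ to the triple $(\id_\mathbf{1}, \id_0, f)$. It is strict multiplicatively since $1[(0,X)] \otimes 1[(0,X')] = 1[(0, X \otimes X')]$, and lax additively via $\eta_\oplus \colon 2[(0,X),(0,X')] \to 1[(0, X \oplus X')]$ given by $(\mathbf{2}\to\mathbf{1},\ \id, \id_{X\oplus X'})$; the compatibilities of Definition~\ref{def:laxmap} follow from the corresponding identities in $\C^0$. For general $x \in J$, the inclusion $\C^x \to \hocolim_J \C^\bullet$, $Y \mapsto 1[(x, Y)]$, is strictly compatible with the $\C^0$-action since $1[(0,X)] \otimes 1[(x,Y)] = 1[(x, X\otimes Y)]$, and is lax additive via the same summation trick, so it is a map of zeroless $\C^0$-modules. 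In the strictly bimonoidal case nothing involving $\gamma_\otimes$ requires attention; the only extra check is axiom~(\ref{igradedleftdist}'), which follows from the combinatorial observation that both composites around that square correspond to the same row-major listing of the index triples $(i,j,k)$, together with strict $\dr$ in $\C^\bullet$.
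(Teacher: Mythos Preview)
Your construction is the same as the paper's: the tensor of words via lexicographic pairing, unit $1[(0,1)]$, strict $\dr$, $\dl=(\xi,\id,\id)$, and $\tau_\otimes$ built from matrix transposition together with the pointwise $\chi$ and $\gamma_\otimes$. The definition on morphisms also matches.

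There are two genuine slips. First, functoriality of $\otimes$ does \emph{not} ``follow pointwise''. Composition in $\hocolim_J\C^\bullet$ involves a block permutation $\sigma$ on the summands before applying the $\vrho$'s, and you must show that your third coordinate commutes with this; the paper spends a full page proving $((\psi,\ell,\vrho)\otimes\id)\circ(\id\otimes(\vphi,k,\pi))=(\id\otimes(\vphi,k,\pi))\circ((\psi,\ell,\vrho)\otimes\id)$ using the generalized pentagon in $\C^\bullet$, and then checking compatibility with composition using axiom~\eqref{igradedpermutation}. Likewise the naturality of the associativity identification on morphisms is not automatic: it needs the mixed-associativity relation $(\id\cdot\dl)\circ\dr=(\dr\cdot\id)\circ\dl$, i.e.\ condition~(\ref{igradedleftdist}') in $\C^\bullet$. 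You flag naturality of $\dl$ and $\gamma_\otimes$ as the main obstacle, but the bifunctoriality and associativity-naturality checks are where the graded axioms are actually consumed.

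Second, your reduction of the pentagon is wrong. In $\hocolim_J\C^\bullet$ the morphisms $\tau_\oplus$, $\dr$, and $\dl$ all have the form $(\sigma,\id,\id)$, so the third coordinate of every edge in the pentagon is the identity; nothing about ``the pentagon in $\C^{x+y}$'' enters, and the claimed identity $\xi\circ\xi=\id$ is false in general (the two $\dl$'s in the pentagon involve different index sizes). The pentagon reduces entirely to the corresponding identity among permutations, i.e.\ to the pentagon in the bipermutative category of finite sets of Example~\ref{ex:bipsets}. The same remark applies to axioms~\eqref{igradedpermutation} and~\eqref{igradeddistass}.
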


\begin{proof}
Thomason showed that the homotopy colimit is a permutative category
without zero.  The additive twist isomorphism
\begin{multline*}
\tau_\oplus \: n[(x_1,X_1), \ldots, (x_n,X_n)]
	\oplus m[(y_1,Y_1), \ldots, (y_m,Y_m)] \\
\overset{\cong}\lra m[(y_1,Y_1), \ldots, (y_m,Y_m)]
	\oplus n[(x_1,X_1), \ldots, (x_n,X_n)]
\end{multline*}
is given by $(\chi(n,m),\id,\id)$, where $\chi(n,m) \in \Sigma_{n+m}$
is as in Example~\ref{ex:bipsets}.  Let $[X]$ and $[Y]$ be shorthand
notations for the objects $n[(x_1,X_1), \ldots, (x_n,X_n)]$ and
$m[(y_1,Y_1), \ldots, (y_m,Y_m)]$, respectively.  The twist isomorphism for
$\oplus$ then appears as
$$
\tau_\oplus \: [X] \oplus [Y] \overset{\cong}\lra [Y] \oplus [X] \,.
$$

In order to distinguish the multiplicative structure of $\C^\bullet$ from
the one on the homotopy colimit, we shall simply denote the composition
functor $\otimes$ on $\C^\bullet$ by juxtaposition of objects, or
by $\cdot$.  The multiplicative bifunctor $\otimes$ on the homotopy
colimit is then defined at the object level by
\begin{multline*}
n[(x_1,X_1),\ldots,(x_n,X_n)] \otimes m[(y_1,Y_1), \ldots,(y_m,Y_m)] \\
:= nm[(x_1+y_1,X_1Y_1), \ldots, (x_1+y_m,X_1Y_m),
	\ldots, (x_n+y_1, X_nY_1), \ldots, (x_n+y_m, X_nY_m)] \,.
\end{multline*}
We will use the shorthand notation $[X] \otimes [Y]$ for this object.

The object $1 := 1[(0,1)]$ is a unit for $\otimes$.  With these
definitions, as extended below to the morphism level, $(\hocolim_J
\C^\bullet, \otimes, 1)$ is a strict monoidal category.

We will define the multiplicative twist map $\tau_\otimes \: [X] \otimes
[Y] \overset{\cong}\lra [Y] \otimes [X]$ as a composite of two morphisms.
First, we apply the twist map $\gamma_\otimes$ for
the multiplication in $\C^\bullet$ to every entry of the form $X_iY_j$.
The triple $(\id_{\mathbf{nm}}, \chi^{x_i,y_j}, \gamma_\otimes)$ defines
a morphism
\begin{multline*}
nm[(x_1+y_1,X_1Y_1), \ldots, (x_1+y_m,X_1Y_m), \ldots, (x_n+y_1,
X_nY_1), \ldots, (x_n+y_m, X_nY_m)] \\
\to
nm[(y_1+x_1,Y_1X_1), \ldots, (y_m+x_1,Y_mX_1),
	\ldots, (y_1+x_n, Y_1X_n), \ldots, (y_m+x_n, Y_mX_n)] \,.
\end{multline*}
(Here $\chi$ is the twist map in $J$.  To be precise, the third coordinate
of the morphism is really $\C(\chi^{x_i,y_j})(\gamma_\otimes)$, but we
omit $\C(\chi^{x_i,y_j})$ from the notation.)
Second, we use the permutation $\sigma_{n,m} \in \Sigma_{nm}$ that induces
matrix transposition.  The triple $(\sigma_{n,m}, \id_{y_j+x_i}, \id)$
defines a morphism
\begin{multline*}
nm[(y_1+x_1,Y_1X_1), \ldots, (y_m+x_1,Y_mX_1),
	\ldots, (y_1+x_n, Y_1X_n), \ldots, (y_m+x_n, Y_mX_n)] \\
\to
mn[(y_1+x_1,Y_1X_1), \dots, (y_1+x_n,Y_1X_n), \dots,
	(y_m+x_1,Y_mX_1), \dots, (y_m+x_n,Y_mX_n)] \,.
\end{multline*}
Let the twist map for $\otimes$ be the composite morphism $\tau_\otimes
= (\sigma_{n,m}, \id_{y_j+x_i}, \id) \circ (\id_{\mathbf{nm}},
\chi^{x_i,y_j}, \gamma_\otimes)$.

As matrix transposition squares to the identity, $\chi^{y_j,x_i}
\circ \chi^{x_i,y_j} = \id$ and
$\gamma_\otimes^2 = \id$, we obtain that $\tau_\otimes^2 = \id$. 
If $[X] = 1$ is the multiplicative unit, then we have that $\sigma_{1,m}$ is
the identity in $\Sigma_m$ and $\chi^{0,y_j}$ is the identity as
well, so $\tau_\otimes \: 1 \otimes [Y] \to [Y] \otimes 1$ is
the identity.  Similarly one shows that $\tau_\otimes$ gives the identity
morphism if $[Y] = 1$ is the multiplicative unit.

We have now verified properties~\eqref{igradedtensor}, \eqref{igradedunit}
and~\eqref{igradedmulttwist} of Definition~\ref{def:igradedbim},
at the level of objects.  We leave to the reader to check
property~\eqref{igradedmult}.  Property~\eqref{igradedzeros} is
disregarded in the zeroless situation.

Writing out $([X]\otimes [Y])\oplus([X']\otimes [Y])$ and
$([X]\oplus[X'])\otimes Y$ we get the same object, and we define the
\rightdistributivity $\dr$ to be the identity morphism between these
two expressions.  The \leftdistributivity $\dl$ involves a reordering of
elements.  It is a morphism
$$
\dl \: ([X]\otimes[Y]) \oplus ([X]\otimes [Y'])
	\lra [X]\otimes ([Y] \oplus [Y']) \,.
$$
The source is
$$
(nm+nm')[(x_1+y_1,X_1Y_1), \ldots, (x_n+y_m, X_nY_m),
	(x_1+y'_1, X_1Y'_1), \ldots, (x_n+y'_{m'}, X_nY'_{m'})] \,,
$$
while the target is
$$
n(m+m')[(x_1+y_1, X_1Y_1), \ldots, (x_1+y'_{m'}, X_1Y'_{m'}),
	\ldots, (x_n+y_1, X_nY_1), \ldots, (x_n+y'_{m'}, X_nY'_{m'})] \,.
$$
The same terms $(x_i+y_j,X_iY_j)$ and $(x_i+y'_j,X_iY'_j)$ occur in
both the source and target, but their ordering differs by a suitable
permutation $\xi \in \Sigma_{nm+nm'}$.  Thus we define the morphism
$\dl$ by the triple $(\xi,\id,\id)$.  Note that $\xi$ is the
\leftdistributivity isomorphism in the bipermutative category of finite
sets and functions, as defined in Example~\ref{ex:bipsets}.

We have to check that the so defined distributivity transformation $\dl$
coincides with $\tau_\otimes \circ (\tau_\otimes \oplus \tau_\otimes)$.
The twist terms $\gamma_\otimes$ and $\chi$ occur twice in the
composition, so they reduce to the identity.  What is left is a
permutation that is caused by $\tau_\otimes \circ (\tau_\otimes \oplus
\tau_\otimes)$, and this is precisely $\xi$.

We have now verified properties~\eqref{igradedrightdist}
and~\eqref{igradedleftdist} of Definition~\ref{def:igradedbim}.
Since the isomorphisms $\tau_\oplus$, $\dr$ and $\dl$ are all of
the form $(\sigma, \id, \id)$ for suitable permutations $\sigma$,
properties~\eqref{igradedpermutation}, \eqref{igradeddistass}
and~\eqref{igradedpentagon} all follow from the corresponding ones in
the bipermutative category of finite sets and functions.

\medskip 

This finishes the proof that the zeroless \bp structure works fine
on objects.  It remains to establish that $\oplus$ and $\otimes$ are
bifunctors on $\hocolim_J \C^\bullet$, that the various associativity and
distributivity laws are natural, and that the additive and multiplicative
twists are natural.

\medskip 

For $\oplus$ this is straightforward
and can be found in \cite{Th3}: suppose given two morphisms
$$
(\psi, \ell_i, \vrho_j) \: n[(x_1,X_1),\ldots,(x_n,X_n)]
\to n'[(x'_1,X'_1),\ldots,(x'_{n'},X'_{n'})]
$$
and
$$
(\vphi, k_i, \pi_j) \: m[(y_1,Y_1),\ldots,(y_m,Y_m)] \to
m'[(y'_1,Y'_1),\ldots,(y'_{m'},Y'_{m'})]
$$
in the homotopy colimit,
with $\psi\: \bfn \to \bfn'$, $\ell_i: x_i
\rightarrow x'_{\psi(i)}$ and $\vrho_j: \bigoplus_{\psi(i)=j}
\C(\ell_i)(X_i) \rightarrow X'_j$, and $\vphi\: \bfm \to
\bfm'$ with corresponding $k_i$ and $\pi_j$.
Then there is a surjection $\psi + \vphi$ from $\mathbf{n+m}$ to
$\mathbf{n'+m'}$, and we can recycle the morphisms $\ell_i$ and $k_i$ to
give corresponding morphisms in $J$.  In the third coordinate
we can use the morphisms $\vrho_j$ and $\pi_j$
to get new ones, because the preimages of $\bfn'$ and
$\bfm'$ under $\psi + \vphi$ are disjoint.  Taken together, this
results in a
morphism from the sum $(n+m)[(x_1,X_1), \ldots, (y_m, Y_m)]$ to the
sum $(n'+m')[(x'_1,X'_1), \ldots, (y'_{m'}, Y'_{m'})]$.  It is
straightforward to see that $\oplus$ defines a bifunctor, that the
associativity law for $\oplus$ is natural, and
that the additive twist $\tau_\oplus$ is natural.


For the remainder of this proof let us denote the elements in the set
$\mathbf{nm}=\{1, \ldots, nm\}$ as pairs $(i,j)$ with $1\leq i \leq n$
and $1 \leq j \leq m$.
The tensor product of the morphisms $(\psi, \ell_i, \vrho_j)$ and $(\vphi,
k_i, \pi_j)$ has three coordinates.  On the first we take the product
of the surjections, \ie,
$$
 \bfn \bfm \ni (i,j) \mapsto (\psi(i),\vphi(j)) \in \bfn'\bfm',
$$
and on the second we take the sum $\ell_i + k_j\: x_i +y_j \to
x'_{\psi(i)} + y'_{\vphi(j)}$ of the morphisms $\ell_i, k_j \in J$.
The third coordinate of the morphism
$(\psi, \ell_i, \vrho_j) \otimes (\vphi, k_i, \pi_j)$
has to be a morphism
$$
\bigoplus_{(\psi(i), \vphi(j))=(r,s)} \C(\ell_i +
k_j)(X_i \cdot Y_j) = \bigoplus_{(\psi(i), \vphi(j))=(r,s)}
\C(\ell_i)(X_i) \cdot \C(k_j)(Y_j) \lra X_r'\cdot Y_s'
$$
in $\C(x'_r+y'_s)$, for each $1 \leq r \leq n'$ and $1 \leq s \leq m'$.
Here, the sum is taken with respect to the lexicographical ordering of
the indices $(i,j)$.  Consider the following diagram:
$$
\xymatrix@C=-1.5cm{
  & \bigoplus_{(\psi(i),\vphi(j))=(r,s)}
  \C(\ell_i)(X_i) \cdot \C(k_j)(Y_j) \ar[dl]_{\id} \ar[dr]^{\sigma} \\
\bigoplus_{\psi(i)=r} \bigoplus_{\vphi(j)=s}
  \C(\ell_i)(X_i) \cdot \C(k_j)(Y_j) \ar[d]_{\bigoplus_{\psi(i)=r} \dl} & &
\bigoplus_{\vphi(j)=s} \bigoplus_{\psi(i)=r}
  \C(\ell_i)(X_i) \cdot \C(k_j)(Y_j) \ar[d]^{\bigoplus_{\vphi(j)=s} \dr} \\
\bigoplus_{\psi(i)=r} \C(\ell_i)(X_i) \cdot
  \left( \bigoplus_{\vphi(j)=s} \C(k_j)(Y_j) \right)
  \ar[d]_{\bigoplus_{\psi(i)=r} \id \cdot \pi_s} & &
\bigoplus_{\vphi(j)=s}
  \left( \bigoplus_{\psi(i)=r} \C(\ell_i)(X_i) \right) \cdot \C(k_j)(Y_j)
  \ar[d]^{\bigoplus_{\vphi(j)=s} \vrho_r \cdot \id} \\
\bigoplus_{\psi(i)=r} \C(\ell_i)(X_i) \cdot Y_s' \ar[d]_{\dr} & &
\bigoplus_{\vphi(j)=s} X_r' \cdot \C(k_j)(Y_j) \ar[d]^{\dl} \\
\left( \bigoplus_{\psi(i)=r} \C(\ell_i)(X_i) \right) \cdot Y_s'
  \ar[rd]_{\vrho_r \cdot \id} & &
X'_r \cdot \left( \bigoplus_{\vphi(j)=s} \C(k_j)(Y_j) \right) 
  \ar[ld]^{\id \cdot \pi_s} \\
  & X_r' \cdot Y_s'
}
$$
The isomorphism $\sigma$ is an appropriate permutation of the summands.
The distributivity laws in $\C^\bullet$ are natural with respect to
morphisms in $\C^\bullet$, and therefore we have the identities:
\begin{align*}
\dr \circ \Bigl( \bigoplus_{\psi(i)=r} \id_{\C(\ell_i)(X_i)} \cdot \pi_s \Bigr)
&= \left( \left( \id_{\bigoplus_{\psi(i)=r} \C(\ell_i)(X_i)} \right)
  \cdot \pi_s \right) \circ \dr \\
\dl \circ \Bigl( \bigoplus_{\vphi(j)=s} \vrho_r \cdot \id_{\C(k_j)(Y_j)} \Bigr)
&= \left(\vrho_r \cdot
  \left(\id_{\bigoplus_{\vphi(j)=s} \C(k_j)(Y_j)} \right) \right) \circ \dl
\end{align*}
Combining these with the generalized pentagon equation
$$
\dr \circ \bigoplus_{\psi(i)=r} \dl
	= \dl \circ \bigoplus_{\vphi(j)=s} \dr \circ \sigma
$$
we see that the diagram commutes.  We define the third coordinate in
the tensor product morphism to be the composition given by either of
the two branches.

Note that for $(\psi, \ell_i, \vrho_j) \otimes \id$ the definition
reduces to $(\vrho_j \cdot \id) \circ \dr$, and similarly the third
coordinate of $\id \otimes (\vphi, k_i, \pi_j)$ is $(\id \cdot \pi_j)
\circ \dl$.
In particular, the tensor product of identity morphisms is an identity
morphism.

Compositions of morphisms in the homotopy colimit involve an additive
twist \cite{Th3}*{p.~1631}.  For
$$
(\psi', \ell'_i, \vrho'_j)\:
n'[(x'_1,X'_1),\ldots,(x'_{n'},X'_{n'})] \rightarrow
n''[(x''_1,X''_1),\ldots,(x''_{n''},X''_{n''})]
$$
the morphism $\bigoplus_{\psi'\psi(i)=r} \C(\ell'_{\psi(i)}\ell_i)(X_i)
\to X_r''$ is given as a composition.  First, one has to permute the
summands
$$
\sigma \: \bigoplus_{\psi'\psi(i)=r} \C(\ell'_{\psi(i)}\ell_i)(X_i) \to
\bigoplus_{\psi'(k)=r} \ \bigoplus_{\psi(i)=k}
\C(\ell'_k \ell_i)(X_i) \,.
$$
Then, as we assumed that $\C$ is a functor to $\strict$, we know that
$$
\bigoplus_{\psi'(k)=r} \ \bigoplus_{\psi(i)=k} \C(\ell'_k\ell_i)(X_i)
  = \bigoplus_{\psi'(k)=r} \ \bigoplus_{\psi(i)=k} \C(\ell'_k)\C(\ell_i)(X_i)
  = \bigoplus_{\psi'(k)=r} \C(\ell'_k)
\Bigl( \bigoplus_{\psi(i)=k} \C(\ell_i)(X_i) \Bigr) \,.
$$
Finally, we apply the morphism
$$
\bigoplus_{\psi'(k)=r} \C(\ell'_k)(\vrho_k) \:
\bigoplus_{\psi'(k)=r} \C(\ell'_k)
  \Bigl( \bigoplus_{\psi(i)=k} \C(\ell_i)(X_i) \Bigr)
\lra \bigoplus_{\psi'(k)=r} \C(\ell'_k)(X'_k)
$$
and continue with $\vrho'_r$ to end up in $X''_r$.

In order to prove that the tensor product actually defines a bifunctor,
we will show that
$$
 (\psi,\ell_i,\vrho_j) \otimes (\vphi,k_i,\pi_j)
= ((\psi,\ell_i,\vrho_j) \otimes \id)
	\circ (\id \otimes (\vphi,k_i,\pi_j))
= (\id \otimes (\vphi,k_i,\pi_j))
	\circ ((\psi,\ell_i,\vrho_j) \otimes \id)
$$
and
$$
((\psi',\ell'_i,\vrho'_j) \otimes \id)
	\circ ((\psi,\ell_i,\vrho_j) \otimes \id)
= ((\psi',\ell'_i,\vrho'_j) \circ (\psi,\ell_i,\vrho_j)) \otimes \id \,,
$$
and leave the check of the remaining identity to the reader.

The first equation is straightforward to see, because
$((\psi,\ell_i,\vrho_j) \otimes \id) \circ (\id \otimes
(\vphi,k_i,\pi_j))$ corresponds to the left branch of the diagram above
and the other composition is given by the right branch.

For the second equation we have to check that
$(((\vrho' \circ \vrho) \cdot \id) \circ \dr)_s =
((\vrho'\cdot \id) \circ \dr \circ (\vrho \cdot \id) \circ \dr)_s$.
Both morphisms have source
$$
 \bigoplus_{\psi'\psi(i)=s} \C(\ell'_{\psi(i)} \ell_i +\id)(X_i \cdot Y_j) =
\bigoplus_{\psi'\psi(i)=s} \C(\ell'_{\psi(i)} \ell_i)(X_i)\cdot Y_j
$$
and the left hand side corresponds to the left branch of the
following diagram and the right hand side to the right branch.
$$
\xymatrix@C=-0.5cm{
{\bigoplus_{\psi'\psi(i)=s} \C(\ell'_{\psi(i)}
  \ell_i)(X_i)\cdot Y_j} \ar[d]_{\dr}
 \ar@{=}[rr]&{}&{\bigoplus_{\psi'\psi(i)=s} \C(\ell'_{\psi(i)}
   \ell_i + \id)(X_i \cdot
   Y_j)}\ar[d]^{\sigma}  \\
{\left(\bigoplus_{\psi'\psi(i)=s} \C(\ell'_{\psi(i)}
  \ell_i)(X_i)\right)\cdot Y_j} \ar[d]_{\sigma \cdot \id}
&{}&{\bigoplus_{\psi'(k)=s}\bigoplus_{\psi(i)=k}
    \C(\ell'_{\psi(i)}+\id)\C(\ell_i+\id)(X_i\cdot Y_j)}
\ar@{=}[d]\\
{\left(\bigoplus_{\psi'(k)=s}\bigoplus_{\psi(i)=k}
    \C(\ell'_{\psi(i)})\C(\ell_i)(X_i)\right)\cdot Y_j}
\ar@{=}[d]
&{}&
{\bigoplus_{\psi'(k)=s} \C(\ell'_k+\id)\left(\bigoplus_{\psi(i)=k}
      \C(\ell_i + \id)(X_i \cdot
    Y_j)\right)}\ar[d]^{\bigoplus_{\psi'(k)=s}
    \C(\ell'_k+\id)((\vrho_k\cdot \id)\circ \dr)}\\
{\left(\bigoplus_{\psi'(k)=s} \C(\ell'_k)\left(\bigoplus_{\psi(i)=k}
\C(\ell_i)(X_i)\right)\right)\cdot Y_j}
\ar[d]_{\left(\bigoplus_{\psi'(k)=s} \C(\ell'_k)(\vrho_k)\right) \cdot \id}
&{}&
{\bigoplus_{\psi'(k)=s} \C(\ell'_k)(X'_k)\cdot Y_j}\ar[d]^{\dr} \\
{\left(\bigoplus_{\psi'(k)=s} \C(\ell'_k)(X'_k)\right) \cdot
  Y_j} \ar[dr]_{\vrho'_s \cdot \id} \ar@{=}[rr]
&{}&{\left(\bigoplus_{\psi'(k)=s} \C(\ell'_k)(X'_k)\right) \cdot
  Y_j} \ar[ld]^{\vrho'_s \cdot \id}\\
{}&{X''_s\cdot Y_j}&{}
}$$
Naturality of $\dr$ in $\C^\bullet$ ensures that $\dr$ can change place
with $\bigoplus_{\psi'(k)=s}\C(\ell'_k+\id)(\vrho_k\cdot \id)$ on the
right branch.  That $\dr \circ \sigma = (\sigma \cdot \id) \circ \dr$
holds because $\C^\bullet$ satisfies property~\eqref{igradedpermutation}
from Definition~\ref{def:igradedbim}, and hence the diagram commutes.

\medskip

In order to show that the associativity identification is natural, we
have to prove that
$$
((\psi^1,\ell_i^1,\vrho_j^1) \otimes
(\psi^2,\ell_i^2,\vrho_j^2)) \otimes (\psi^3,\ell_i^3,\vrho_j^3) =
(\psi^1,\ell_i^1,\vrho_j^1) \otimes
((\psi^2,\ell_i^2,\vrho_j^2) \otimes
(\psi^3,\ell_i^3,\vrho_j^3))
$$
for morphisms in the homotopy colimit.  The claim is obvious on the
coordinates of the surjections and the morphisms in $J$.

For proving the identity in the third coordinate of morphisms, note
that the naturality of $\otimes$ implies that we can write
\begin{align*}
& ((\psi^1,\ell_i^1,\vrho_j^1) \otimes
(\psi^2,\ell_i^2,\vrho_j^2)) \otimes (\psi^3,\ell_i^3,\vrho_j^3) \\
= & (((\psi^1,\ell_i^1,\vrho_j^1) \otimes \id) \otimes \id) \circ
((\id \otimes (\psi^2,\ell_i^2,\vrho_j^2)) \otimes \id) \circ
((\id \otimes \id) \otimes (\psi^3,\ell_i^3,\vrho_j^3)) \,.
\end{align*}
Therefore, it suffices to prove the claim for each of the factors.  We
will show it for the middle one and leave the other ones to the
curious reader.  Recall that $\id \otimes
(\psi^2,\ell_i^2,\vrho_j^2)$ has as third coordinate the composition
$(\id \cdot \vrho_j^2) \circ \dl$ and therefore
$(\id \otimes (\psi^2,\ell_i^2,\vrho_j^2)) \otimes \id$ has third
coordinate
$$
 (((\id \cdot \vrho_j^2) \circ \dl) \cdot \id) \circ \dr = (\id
\cdot \vrho_j^2 \cdot \id) \circ (\id \cdot \dl) \circ \dr \,.
$$
But $(\id \cdot \dl) \circ \dr = (\dr \cdot \id) \circ \dl$
(equation (\ref{igradedleftdist}') of Definition~\ref{def:igradedbc}) holds in $\C^\bullet$, and therefore the
third coordinate equals
$$
(\id
\cdot \vrho_j^2 \cdot \id) \circ (\id \cdot \dl) \circ \dr
= (\id
\cdot \vrho_j^2 \cdot \id) \circ (\dr \cdot \id) \circ \dl
$$
which is the third coordinate of $\id \otimes
((\psi^2,\ell_i^2,\vrho_j^2) \otimes \id)$.

\medskip

Naturality of the multiplicative twist map can be seen as follows.
We have to show that
$$
 \tau_\otimes \circ ((\psi, \ell_i, \vrho_j) \otimes (\vphi, k_i,
\pi_j)) = ((\psi, \ell_i, \vrho_j) \otimes (\vphi,
k_i, \pi_j)) \circ \tau_\otimes \,.
$$
On the first coordinate of the morphisms this reduces to the equality
$$
 \sigma_{n',m'} \circ (\psi,\vphi)(i,j) = (\vphi(j),\psi(i)) =
(\vphi, \psi) \circ \sigma_{n,m}(i,j),
$$
and on the second coordinate we have the equation
$$
 \chi \circ (\ell_i + k_j) = (k_j + \ell_i) \circ \chi
$$
because $\chi$ is natural.  Thus, it remains to prove that the above
equation holds in the third coordinate, which amounts to showing that the
following diagram commutes.
$$
\xymatrix{
{\bigoplus_{\psi(i)=r} \bigoplus_{\vphi(j)=s} \C(\ell_i)(X_i)
\cdot \C(k_j)(Y_j)} \ar[rr]^{(\bigoplus\bigoplus \gamma_\otimes)
\circ \sigma} \ar[dd]_{\bigoplus_{\psi(i)=r}\dl} & {} &
{\bigoplus_{\vphi(j)=s}
\bigoplus_{\psi(i)=r} \C(k_j)(Y_j)
\cdot \C(\ell_i)(X_i)}\ar[d]^{\sigma^{-1}}\\
{}&{}&{\bigoplus_{\psi(i)=r} \bigoplus_{\vphi(j)=s}
\C(k_j)(Y_j) \cdot \C(\ell_i)(X_i)}
\ar[d]^{\bigoplus_{\psi(i)=r} \dr}\\
{\bigoplus_{\psi(i)=r} \C(\ell_i)(X_i)
\cdot \left(\bigoplus_{\vphi(j)=s}\C(k_j)(Y_j)\right)}
\ar[d]_{\bigoplus_{\psi(i)=r} \id \cdot \pi_s}
\ar@{.>}[rr]^{\bigoplus_{\psi(i)=r}
\gamma_\otimes}&{}&
{\bigoplus_{\psi(i)=r}  \left(\bigoplus_{\vphi(j)=s}
\C(k_j)(Y_j)\right) \cdot \C(\ell_i)(X_i)}
\ar[d]^{\bigoplus_{\psi(i)=r} \pi_s \cdot \id}\\
{\bigoplus_{\psi(i)=r} \C(\ell_i)(X_i)
\cdot Y'_s} \ar[d]_{\dr}&{}&{\bigoplus_{\psi(i)=r}Y'_s \cdot
\C(\ell_i)(X_i)}\ar[d]^{\dl}\\
{\left(\bigoplus_{\psi(i)=r}\C(\ell_i)(X_i)\right)\cdot
Y'_s} \ar[d]_{\vrho_r \cdot \id}&{}&{Y'_s \cdot
\left(\bigoplus_{\psi(i)=r}\C(\ell_i)(X_i)\right)}\ar[d]^{\id \cdot
\vrho_r}\\
{X'_r \cdot Y'_s} \ar[rr]^{\gamma_\otimes}&{}&{Y'_s \cdot X'_r}
}
$$
The top diagram commutes because $\dl$ is defined in terms of $\dr$ and
$\gamma_\otimes$.  For the bottom diagram we apply the same argument
together with the naturality of $\gamma_\otimes$.

\medskip

We have to check that \rightdistributivity is the
identity on morphisms.
Consider three morphisms as above.  When we focus on the surjections
$\psi^1\: \bfn \rightarrow \bfn'$, $\psi^2\: \bfm \rightarrow \bfm'$, and
$\psi^3\: \bfl \rightarrow \bfl'$, we see that a condition like
$(\psi^1 + \psi^2)\psi^3(i,j) = (r,s)$ only affects either the preimage of
$\bfn'\bfl'$ or the preimage of $\bfm'\bfl'$ in $(\bfn+\bfm)\bfl$, but never
both.  Therefore, the third coordinate of the morphism
$$
 ((\psi^1,\ell_i^1,\vrho_j^1) \oplus
(\psi^2,\ell_i^2,\vrho_j^2)) \otimes (\psi^3,\ell_i^3,\vrho_j^3)
$$
is either a third coordinate of
$ (\psi^1,\ell_i^1,\vrho_j^1)\otimes (\psi^3,\ell_i^3,\vrho_j^3)$
or of $ (\psi^2,\ell_i^2,\vrho_j^2)\otimes
(\psi^3,\ell_i^3,\vrho_j^3)$, and thus \rightdistributivity is the
identity on morphisms.

In the $J$-graded bipermutative case the naturality of the
\leftdistributivity isomorphism follows from the one of $\dr$ and the
multiplicative twist.  In both the bipermutative and the strictly
bimonoidal case \leftdistributivity is given by $(\xi, \id, \id)$.
Therefore naturality of $\dl$ in the bipermutative setting proves
naturality in the strictly bimonoidal setting as well.

\medskip

This finishes the proof that $\hocolim_J \C^\bullet$ is a bipermutative
category without zero.  We now prove the remaining statements of the
lemma.

\medskip

There is a natural functor $G \: \C^0 \to \hocolim_J \C^\bullet$ which
sends $X \in \C^0$ to $G(X) = 1[(0,X)]$.  Note that the functor $G$ is
strict (symmetric) monoidal with respect to $\otimes$, because $G(1) =
1[(0,1)]$ and
$$
G(X) \otimes G(Y) = 1[(0,X)] \otimes 1[(0,Y)] = 1[(0+0, X \otimes Y)]
= 1[(0, X \otimes Y)] = G(X \otimes Y) \,.
$$
However, $G$ is only lax symmetric monoidal with respect to $\oplus$:
there is a binatural transformation $\eta_\oplus = (\psi, \id, \id)$
from $G(X) \oplus G(X') = 1[(0,X)] \oplus 1[(0,X')] = 2[(0,X),(0,X')]$ to
$G(X \oplus X') = 1[(0, X\oplus X')]$, given by the canonical surjection
$\psi \: \mathbf2 \to \mathbf1$ and identity morphisms in the other two
components.  This morphism is of course not an isomorphism.

We have to show that the functor $G$ respects the distributivity
constraints $\dr = \id$ and $\dl$.  In our situation we have that
$\eta_\otimes = \id$, so we have to check that
$$
\eta_\oplus = \eta_\oplus \otimes \id
$$
and
$$
G(\tau_\otimes \circ (\tau_\otimes \oplus \tau_\otimes))
  \circ \eta_\oplus = (\id \otimes \eta_\oplus)
  \circ \tau_\otimes \circ (\tau_\otimes \oplus \tau_\otimes)
\,.
$$
The first equation is just stating the fact that
$$
\xymatrix{
2[(0,X), (0,X')] \otimes 1[(0,Y)] \ar[rr]^-{\eta_\oplus \otimes \id}
  \ar@{=}[d] && 1[(0, X \oplus X')] \otimes 1[(0, Y)] \ar@{=}[d] \\
2[(0,X \otimes Y), (0,X' \otimes Y)] \ar[rr]^-{\eta_\oplus}
  && 1[(0, (X \oplus X') \otimes Y)]
}
$$
commutes, in view of the identity $\dr \: (X \otimes Y) \oplus (X' \otimes Y)
= (X \oplus X') \otimes Y$.

For the \leftdistributivity law we should observe that the
multiplicative twist $\tau_\otimes$ on the homotopy colimit reduces to
the morphism $(\id,\chi,\gamma_\otimes)$ in the case of elements of
length $1$ in the homotopy colimit, and that $\chi^{0,0} = \id$.
Furthermore, $\id \otimes (\psi,
\id,\id) = (\psi, \id, \id)$ holds.  Therefore
\begin{align*}
(\id \otimes \eta_\oplus) \circ \dl
&= (\id \otimes (\psi, \id,\id))
\circ \tau_\otimes \circ (\tau_\otimes \oplus \tau_\otimes) \\
&= (\psi, \id,\id)
\circ (\id,\id,\gamma_\otimes \circ (\gamma_\otimes \oplus \gamma_\otimes)) \\
&=
(\id,\id,\gamma_\otimes \circ (\gamma_\otimes \oplus \gamma_\otimes))
\circ (\psi, \id,\id) = G(\dl) \circ \eta_\oplus \,.
\end{align*}
The claim about the module structure is obvious.

As the \leftdistributivity on the homotopy colimit is of the form
$(\xi,\id,\id)$, the above proof carries over to the strictly
bimonoidal case.
\end{proof}

\begin{lem} \label{lem:indmaps}
If $F \: \C^\bullet \rightarrow \D^\bullet$ is a lax morphism of
$J$-graded \bps (resp.~$J$-graded \sbcs) then it induces a lax morphism
$F_* \: \hocolim_J \C^\bullet \rightarrow \hocolim_J \D^\bullet$ of
zeroless \bps (resp.~zeroless \sbcs).
\end{lem}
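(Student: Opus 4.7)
The plan is to piggy-back on Thomason's functorial construction from subsection~\ref{sec:zeroless}. Forgetting zero objects, the lax morphism $F$ of $J$-graded \bps has an underlying left lax transformation of functors $J \to \permnz$, whose components $F_x \: \C(x) \to \D(x)$ are lax symmetric monoidal with respect to $\oplus$. Therefore Thomason's $\hocolim_J \: \permnz^{\!J} \to \strictnz$ yields a strict symmetric monoidal functor
$$
F_* = \hocolim_J F \: \hocolim_J \C^\bullet \lra \hocolim_J \D^\bullet
$$
which is automatically strict with respect to $\oplus$ on the nose (using the additive $\eta_\oplus$ of $F$ componentwise, summed into concatenations as in \cite{Th3}). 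Thus the additive half of the lax morphism structure is free.

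Next I would define the multiplicative lax structure. On objects, $F_*$ sends $n[(x_1,X_1),\ldots,(x_n,X_n)]$ to $n[(x_1,F(X_1)),\ldots,(x_n,F(X_n))]$, and both $F_*[X] \otimes F_*[Y]$ and $F_*([X]\otimes[Y])$ are of length $nm$ with $(i,j)$-th entries $(x_i+y_j, F(X_i)\otimes F(Y_j))$ and $(x_i+y_j, F(X_i\otimes Y_j))$ respectively. I would define the binatural transformation
$$
\eta_\otimes^*\: F_*[X]\otimes F_*[Y] \lra F_*([X]\otimes [Y])
$$
as the morphism $(\id_{\mathbf{nm}}, \id_{x_i+y_j}, \eta_\otimes^{x_i,y_j})$ in the homotopy colimit, using the components of $\eta_\otimes$ of~$F$ entry by entry. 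The unit map $1 = 1[(0,1_\D)] \to 1[(0,F(1_\C))] = F_*(1)$ is given by $(\id_{\mathbf1},\id_0,n)$ where $n\:1_\D\to F(1_\C)$ is the unital structure map of~$F$.

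The main obstacle, and the heart of the verification, is binaturality of $\eta_\otimes^*$ with respect to arbitrary morphisms $(\psi,\ell_i,\vrho_j)$ and $(\vphi,k_i,\pi_j)$ in the homotopy colimit, since the third coordinate of their tensor product was defined in the proof of Lemma~\ref{lem:zeroless} using the distributivity constraints $\dl$ and $\dr$. This is exactly where the lax-morphism requirement that $F$ respects $\dl$ and $\dr$ (Definition~\ref{def:gradedlaxmap}) is needed: writing out the square expressing naturality of $\eta_\otimes^*$ against $(\psi,\ell_i,\vrho_j)\otimes(\vphi,k_i,\pi_j)$, one has to push $\eta_\otimes$ and $\eta_\oplus$ past iterated $\dl$, $\dr$ and the summation permutations $\sigma$, and the required identities
$\eta_\oplus \circ (\eta_\otimes \oplus \eta_\otimes) = \eta_\otimes \circ (\eta_\oplus \otimes \id)$ and
$F(\dl) \circ \eta_\oplus \circ (\eta_\otimes \oplus \eta_\otimes) = \eta_\otimes \circ (\id \otimes \eta_\oplus) \circ \dl$ provide exactly the needed rewrites. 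The naturality of $\eta_\otimes$ in $\C$ handles the interaction with $\vrho_j$ and $\pi_j$, and naturality in $J$ handles compatibility with the morphisms $\ell_i + k_j$.

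Compatibility with $\gamma_\otimes$ reduces, via the explicit description $\tau_\otimes = (\sigma_{n,m},\id,\gamma_\otimes)\circ(\id,\chi,\id)$ from the proof of Lemma~\ref{lem:zeroless}, to the pointwise compatibility of $\eta_\otimes$ with $\gamma_\otimes$ in $\D$, which is part of Definition~\ref{def:laxmap}. Compatibility with \rightdistributivity is automatic because $\dr$ is the identity morphism $(\id,\id,\id)$ on both sides, so both composites agree with $\eta_\otimes^*$ itself; \leftdistributivity then follows in the bipermutative case from the identity $\dl = \gamma_\otimes \circ \dr \circ (\gamma_\otimes\oplus\gamma_\otimes)$ by combining the $\gamma_\otimes$-compatibility with the $\dr$-compatibility. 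In the strictly bimonoidal case, \leftdistributivity must be checked separately but is again of the form $(\xi,\id,\id)$ by Lemma~\ref{lem:zeroless}, so the verification reduces to the same componentwise identity as for $\dr$, using the second relation from Definition~\ref{def:laxmap}. This completes the proposed proof in both cases.
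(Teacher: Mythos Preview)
Your proposal is correct and follows essentially the same route as the paper: define $F_*$ objectwise, build its action on morphisms from $\eta_\oplus$ so that $F_*$ is strict for $\oplus$, set $\eta_\otimes^* = (\id,\id,\eta_\otimes)$, and then verify the distributivity constraints. The only cosmetic difference is your handling of $\dl$ in the bipermutative case: you derive it from $\gamma_\otimes$-compatibility together with $\dr$, whereas the paper observes directly (and uniformly for both cases) that since $\dl = (\xi,\id,\id)$ is concentrated in the first coordinate and $\eta_\otimes^*\oplus\eta_\otimes^*$ in the third, they commute on the nose.
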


\begin{proof}
Of course, we define $F_*\: \hocolim_J \C^\bullet \to \hocolim_J
\D^\bullet$ on objects by
$$
F_*(n[(x_1,X_1),\ldots, (x_n,X_n)])
:= n[(x_1,F(X_1)),\ldots, (x_n,F(X_n))] \,.
$$
Note that with this definition $F_*$ is \emph{strict} symmetric monoidal
with respect to $\oplus$ even if $F$ was only lax symmetric monoidal.

Given a morphism $(\psi,\ell_i,\vrho_j)$ from $n[(x_1,X_1), \ldots,
(x_n,X_n)]$ to $m[(y_1,Y_1), \ldots, (y_m,Y_m)]$ we define the induced
morphism
$$
(\psi,\ell_i,\vrho_j^F) \:
F_*(n[(x_1,X_1), \ldots, (x_n,X_n)]) \to F_*(m[(y_1,Y_1), \ldots,
(y_m,Y_m)])
$$
as follows: we keep the surjection $\psi$ and the morphisms $\ell_i$,
and for the third coordinate we take the composition
$$
\vrho_j^F \: \bigoplus_{\psi(i)=j} \D(\ell_i)(F(X_i))
= \bigoplus_{\psi(i)=j} F( \C(\ell_i)(X_i) )
  \overset{\eta_\oplus}\lra
F(\bigoplus_{\psi(i)=j} \C(\ell_i)(X_i) )
  \overset{F(\vrho_j)}\lra F(Y_j) \,.
$$
The naturality of $\eta_\oplus$ ensures that composition of morphisms
is well-defined.

Let $n[(x_1,X_1),\ldots, (x_n,X_n)]$ and $m[(y_1,Y_1),\ldots, (y_m,Y_m)]$
be two objects in $\hocolim_J \C^\bullet$.  Applying $\otimes \circ
(F_*,F_*)$ yields
$$
nm[(x_1+y_1,F(X_1)\otimes F(Y_1)),\ldots,(x_n+y_m, F(X_n) \otimes F(Y_m))]
$$
whereas the composition $F_* \circ \otimes$ gives
$$
nm[(x_1+y_1,F(X_1\otimes Y_1)),\ldots,(x_n+y_m, F(X_n \otimes Y_m))] \,.
$$
Thus, we can use $(\id,\id,\eta_\otimes)$ to obtain a natural
transformation $\eta_\otimes^*$ from $\otimes \circ (F_*,F_*)$
to $F_* \circ \otimes$.  This transformation inherits all properties
from $\eta_\otimes$.  In particular, $\eta_\otimes^*$ is lax symmetric
monoidal if $\eta_\otimes$ was so.

It remains to check the properties concerning the distributivity
laws.  As $\dr$ is the identity on the $J$-graded bipermutative
category and on the homotopy colimit, and $\eta_\oplus$ is the
identity on the homotopy colimit, the equalities reduce to
$$
\eta_\otimes^* \oplus \eta_\otimes^* = \eta_\otimes^*
$$
and
$$
F_*(\dl) \circ (\eta_\otimes^* \oplus \eta_\otimes^*)
	= \eta_\otimes^* \circ \dl \,.
$$
The first equation is straightforward to check.

The \leftdistributivity law in the homotopy colimit is given by $\dl =
(\xi,\id,\id)$ and $\eta_\otimes^* \oplus \eta_\otimes^*$ is equal to
$$
\eta_\otimes^* \oplus \eta_\otimes^* = (\id_{\mathbf{nm}},
\id, \eta_\otimes)
\oplus (\id_{\mathbf{nm'}}, \id, \eta_\otimes) \,.
$$
As addition in the homotopy colimit is given by
concatenation, we can simplify the above expression to
$(\id_{\mathbf{nm+nm'}},\id,\eta_\otimes)$.  As $\dl$ differs from
the identity only in the first coordinate, and $\eta_\otimes^* \oplus
\eta_\otimes^*$ only in the third coordinate, these morphisms commute.
\end{proof}

\section{A ring completion device} \label{sec:zeros}

Recall from subsection~\ref{sec:cube} the construction $G\M\:
I\smallint\Q\to\strict$.

\begin{lem} \label{lem:addeqs}
Let $\M$ be a permutative category.  Then
\begin{enumerate}
\item
the canonical functor $\M\to\hocolim_{I\smallint\Q}G\M$ is a stable
equivalence,
\item
$\hocolim_{I\smallint\Q}G\M$ is group complete, and
\item
the canonical functor $\hocolim_{T\in\Q\mathbf1}G\M(\mathbf1,T) \we
\hocolim_{I\smallint\Q}G\M$ is an unstable equivalence.
\end{enumerate}
\end{lem}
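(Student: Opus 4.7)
Set $H_n := \hocolim_{T\in\Q\bfn} G\M(\bfn, T)$; this defines a functor of $\bfn \in I$ via the extension-by-zero transformations of subsection~\ref{sec:cube}. The standard identification of the Grothendieck-construction homotopy colimit as an iterated homotopy colimit gives a natural unstable equivalence $\hocolim_{I\smallint\Q} G\M \simeq \hocolim_{\bfn\in I} H_n$. The plan is to reduce all three assertions to a single key claim: for every morphism $\vphi \: \bfm \to \bfn$ in $I$ with $m, n \geq 1$, the induced functor $H_m \to H_n$ is an unstable equivalence.

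Granting the claim, I derive (3) in two steps. First, the inclusion $I_{\geq 1} \hookrightarrow I$ is homotopy cofinal: for each $\bfm \in I$ the comma category $\bfm \downarrow I_{\geq 1}$ is non-empty (witnessed by $\bfm \hookrightarrow \bfm+\mathbf1$) and admits pushouts $\bfn_1 \cup_\bfm \bfn_2$ along $\bfm$, hence is filtered and contractible; so $\hocolim_{I_{\geq 1}} H \simeq \hocolim_I H$. Second, on $I_{\geq 1}$ every structure map of $H$ is an unstable equivalence by the claim, so Bökstedt's Lemma~\ref{lem:boekstedt_lemma}---whose proof transfers verbatim to $I_{\geq 1}$---yields $H_1 \we \hocolim_{I_{\geq 1}} H$. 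Composing produces (3). Assertion (2) is immediate from (3), since $H_1 = \hocolim(0 \gets \M \to \M \times \M)$ is Thomason's Grayson--Quillen model, which is group complete. Assertion (1) is then the composite of the classical stable group-completion equivalence $\M \to H_1$ with the unstable equivalence of (3).

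For the key claim itself, I identify $H_n$ inductively as an iterated Grayson--Quillen construction applied to $\M$. Using the isomorphism $\Q\bfn \cong (\Q\mathbf1)^n$ of permutative categories, Fubini for iterated homotopy colimits, and the decomposition $G\M(\bfn, T_1+\cdots+T_n) \cong \M^{\P T_1 \times \cdots \times \P T_n}$, one shows $H_n$ is unstably equivalent to the Grayson--Quillen model applied to $H_{n-1}$. Since $H_1$ is already group complete and stably equivalent to $\M$, induction shows that every $H_n$ for $n \geq 1$ is group complete and stably equivalent to $\M$, and that the maps $H_m \to H_n$ induced by any $\vphi$ are stable equivalences compatible with these identifications. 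Between group-complete permutative categories, a stable equivalence is automatically an unstable equivalence, because the group-completion unit $|\N| \to \Omega^\infty \Spt \N$ is an equivalence when $\N$ is already group complete.

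The main obstacle is handling arbitrary $\vphi \in I$ uniformly in the induction: a general $\vphi$ decomposes into an inclusion followed by a permutation, and the permutation part acts on the iterated Grayson--Quillen model by block-permuting cube coordinates. One must check at the spectrum level that this action yields a stable equivalence compatible with the inductive identifications $\Spt H_n \simeq \Spt\M$. The $(I\smallint\Q)$-graded bipermutative structure on $G\M$ from Proposition~\ref{prop:GQisIgrded} supplies the needed symmetry, reducing the check to a diagram chase in the homotopy category.
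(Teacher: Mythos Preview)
Your overall strategy is recoverable but differs from the paper's and contains two genuine errors as written.

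First, the justification of homotopy cofinality of $I_{\geq 1}\hookrightarrow I$ is wrong. The comma category $\bfm\downarrow I_{\geq 1}$ does \emph{not} admit pushouts and is \emph{not} filtered: all morphisms in $I$ are injections, so two distinct parallel arrows $f,g\colon\bfn_1\rightrightarrows\bfn_2$ can never be coequalized by any $h\colon\bfn_2\to\bfk$ in $I$ (injectivity of $h$ would force $f=g$). The cofinality you want does hold, but for a different reason: for $m\geq 1$ the comma category has an initial object, and for $m=0$ it is $I_{\geq 1}$ itself, which is contractible via the standard shift argument (there are natural transformations $\id\Rightarrow(-+\mathbf1)\Leftarrow\text{const}_{\mathbf1}$, the second sending $1$ to the last coordinate).

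Second, you invoke Proposition~\ref{prop:GQisIgrded} to handle permutations, but that proposition requires $\R$ to be bipermutative, while Lemma~\ref{lem:addeqs} is stated for a merely permutative $\M$. This appeal is both illegitimate and unnecessary: a permutation $\sigma\in\Sigma_n$ induces an isomorphism of the diagram $G\M(\bfn,-)$ over $\Q\sigma\colon\Q\bfn\to\Q\bfn$, hence an auto-equivalence of $H_n$, with no multiplicative structure required.

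The paper's route is more direct and avoids both issues. It works at the spectrum level from the start: since $\hocolim(\ast\leftarrow X\overset{\Delta}\to X\times X)\simeq X$ stably, one gets by induction that \emph{every} map in the $I$-diagram $\bfn\mapsto\Spt H_n$ (including from $\bfn=\mathbf0$) is a stable equivalence, so B{\"o}kstedt's lemma applies on all of $I$ and gives~(1) immediately---no restriction to $I_{\geq 1}$, no cofinality argument. Part~(2) is then proved by a short direct computation exhibiting explicit additive inverses in $\pi_0$ of the full homotopy colimit, rather than by identifying each $H_n$ as an iterated group completion. Part~(3) follows from~(1) and~(2) since stable equivalences between group complete symmetric monoidal categories are unstable equivalences. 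Your detour through the unstable identification $H_n\simeq\text{(Grayson--Quillen of }H_{n-1})$ and the restriction to $I_{\geq1}$ is not needed.
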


\begin{proof}
Recall that spectrification commutes with homotopy colimits \cite{Th3}*{4.1},
\ie, $\hocolim_J \Spt$ is equivalent to $\Spt \hocolim_J$.  Given $\bfn\in
I$, the homotopy colimit $\hocolim_{T\in\Q\bfn} \Spt G\M(\bfn,T)$ can be
calculated by taking the homotopy colimit in each of the $n$ directions of
$\Q\bfn$ successively.  Since all nontrivial maps involved are diagonal
maps, we see that the homotopy colimit in the $n$-th direction can be
identified with $\hocolim_{S\in\Q(\mathbf{n-1})} \Spt
G\M(\mathbf{n-1},S)$, through the inclusion $\mathbf{n-1}\to\bfn$
that skips~$n$.  By induction it follows that each morphism in the
$I$-shaped diagram $\bfn \mapsto \hocolim_{T\in\Q\bfn} G\M(\bfn,T)$ is
a stable equivalence.  Lemma~\ref{lem:boekstedt_lemma} then says that
the functor $\M \to \hocolim_{\bfn\in I} \hocolim_{T\in\Q\bfn} G\M(\bfn,T)$
is a stable equivalence.

The claim that the functor $\M \to \hocolim_{I\smallint\Q} G\M$ is a stable
equivalence follows, since by extending Thomason's proof \cite{Th1} of
$\hocolim_I |\Q| \simeq |I\smallint\Q| = \hocolim_{I\smallint\Q} {*}$
(for the trivial functor~$*$) to allow for arbitrary functors from
$I\smallint\Q$, we have an equivalence
$$
\hocolim_{I\smallint\Q} \Spt G\M \simeq
\hocolim_{\bfn\in I} \hocolim_{T\in\Q\bfn} \Spt G\M(\bfn,T) \,.
$$
See also \cite{Sch}*{2.3} for a write-up in the dual situation.

That $\pi_0$ of $\hocolim_{I\smallint\Q} G\M$ is a group can be seen as
follows.  It is enough to show that elements of the form $1[((\bfn,S),a)]$
have negatives, for $n\geq1$.
If $S \ne \bfn$ then there is an inclusion $S \subseteq T \in \Q\bfn$ with
$T$ containing a negative number, so there is a path
$$
1[((\bfn,S),a)]\to 1[((\bfn,T),0)] \gets 1[((\mathbf0, \mathbf0),0)]
$$
in the homotopy colimit, and the element represents zero.

If $S = \bfn$, so that $a \in \M^{\P\bfn}$, let
$b \in \M^{\P\bfn}$ be given by $b_U = a_V$, where $V = U \cup \{n\}$
if $n \notin U$ and $V = U \setminus \{n\}$ if $n \in U$, for
all $U \subseteq \bfn$.  Then $a \oplus b$ is isomorphic to
$\M_{\bfn}(\iota)(c)$ for some $c \in \M^{\P S}$, where
$S = \mathbf{n-1}$ and $\iota \: S \subset \bfn$ is the inclusion.
Hence there is a path
$$
1[(\bfn,\bfn),a] \oplus 1[(\bfn,\bfn),b]
\to 1[(\bfn,\bfn), a \oplus b]
\leftrightarrow 1[(\bfn,\bfn), \M_{\bfn}(\iota)(c)]
\gets 1[(\bfn,S), c]
$$
in the homotopy colimit, and, as we saw above, the right hand element
represents zero.

Now, since stable equivalences between group complete symmetric
monoidal categories are unstable equivalences, the third claim also follows.
\end{proof}

\begin{lem}\label{lem:compGQ}
If $\M$ is a permutative category with zero, such that all morphisms
are isomorphisms and each additive translation is faithful, then there
is an unstable equivalence
$$
\hocolim_{S\in\Q\mathbf1} G\M(\mathbf1,S) \we (-\M)\M \,.
$$
\end{lem}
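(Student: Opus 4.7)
The plan is to unpack the homotopy colimit explicitly, construct a comparison functor to the Grayson--Quillen model, and then deduce it is an unstable equivalence from the fact that both sides are group complete and stably equivalent to $\M$.

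First, note that $\Q\mathbf1$ has three objects $\varnothing,\{1\},\{-1\}$ and non-identity morphisms $\varnothing \subseteq \{1\}$ and $\varnothing \subseteq \{-1\}$. The diagram $G\M(\mathbf1,-)$ is therefore
$$
0 \lla \M \overset{\Delta}\lra \M \times \M,
$$
so that $\hocolim_{\Q\mathbf1} G\M(\mathbf1,-)$ is Thomason's homotopy colimit of this zigzag. Objects are formal sums $n[(T_1,X_1),\dots,(T_n,X_n)]$ with $T_i \in \Q\mathbf1$ and $X_i \in G\M(\mathbf1,T_i)$, and morphisms are triples of a surjection of indexing sets, inclusions of subsets in $\Q\mathbf1$, and compatible maps in $\M$, $\M\times\M$, or $0$.

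Second, I would construct a lax morphism $\Phi \: \hocolim_{\Q\mathbf1} G\M(\mathbf1,-) \to (-\M)\M$ via the universal property of Thomason's construction as left adjoint to $\Delta V$. This requires specifying a left lax transformation from $G\M(\mathbf1,-)$ to the constant $\Q\mathbf1$-diagram at $(-\M)\M$. The components are the canonical inclusion $\M\times\M \hookrightarrow (-\M)\M$, sending $(a,b)$ to the pair $(a,b)$; the functor $\M \to (-\M)\M$ sending $a$ to $(a,a)$; and the unique functor from the zero category. Compatibility with $\Delta \: \M \to \M\times\M$ is strict, and compatibility with $\M \to 0$ is witnessed by the canonical morphism $(a,a) \to (0,0)$ in $(-\M)\M$ given by the translation $a$. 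The resulting $\Phi$ is then a symmetric monoidal functor, and by construction the composite $\M \to \hocolim_{\Q\mathbf1} G\M(\mathbf1,-) \overset{\Phi}\lra (-\M)\M$ is the standard inclusion $a \mapsto (a,0)$ (or $(a,a)$, canonically equivalent to $(a,0)$).

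Third, I would show $\Phi$ is a stable equivalence and then upgrade to an unstable equivalence. The canonical functor $\M \to \hocolim_{\Q\mathbf1} G\M(\mathbf1,-)$ is a stable equivalence by combining parts (1) and (3) of Lemma~\ref{lem:addeqs}, and it is group complete by parts (2) and (3) of the same lemma. On the other side, the Grayson--Quillen theorem \cite{GQ}, applied under the standing hypotheses that $\M$ is a groupoid and every additive translation is faithful, asserts that $\M \to (-\M)\M$ is a stable equivalence and that $(-\M)\M$ is group complete. Two out of three then implies $\Phi$ is a stable equivalence between two group complete symmetric monoidal categories, and such a stable equivalence is automatically an unstable equivalence, completing the proof.

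The main obstacle will be the second step: verifying carefully that the proposed componentwise data really define a left lax transformation (in particular, that the prescribed translation morphism witnessing the map $\M \to 0$ is natural and compatible with the monoidal structure on $(-\M)\M$), and that the resulting $\Phi$ respects enough structure for Grayson--Quillen's identification of $\M \to (-\M)\M$ as a stable equivalence to apply through $\Phi$. All other steps reduce to bookkeeping against Lemma~\ref{lem:addeqs} and classical facts.
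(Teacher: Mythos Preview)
Your overall strategy---construct a comparison functor via the universal property of Thomason's homotopy colimit and then leverage that both sides are group-complete and stably equivalent to $\M$---is different from the paper's proof, which simply invokes Thomason's Theorem~5.2 in \cite{Th3} (the unstable equivalence to the simplified double mapping cylinder) together with his explicit comparison of that cylinder with $(-\M)\M$ on pp.~1657--1658.

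There is, however, a concrete error in your second step. In the Grayson--Quillen category $(-\M)\M$, a morphism $(a,b)\to(c,d)$ is represented by an object $s\in\M$ together with isomorphisms $a\oplus s\cong c$ and $b\oplus s\cong d$; hence the canonical translation morphism goes from $(0,0)$ to $(a,a)$, not the other way, and in general there is no morphism $(a,a)\to(0,0)$. With your choice $F_\varnothing(a)=(a,a)$, the left lax transformation demands a component $F_\varnothing\Rightarrow F_{\{-1\}}\circ(\M\to 0)$, i.e.\ a natural $(a,a)\to(0,0)$, which does not exist. The repair is exactly the assignment recorded in the Remark following the lemma: take $F_\varnothing(a)=(0,0)$, so that the nontrivial $2$-cell lives over $\varnothing\subseteq\{1\}$ and is the translation $(0,0)\to(a,a)$.

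With that correction your third step also needs adjustment: the composite $\M\to\hocolim\to(-\M)\M$ through the $\varnothing$-vertex is now the constant functor at $(0,0)$, not the standard inclusion. You must instead route $\M$ through the $\{1\}$-vertex via $a\mapsto(a,0)\in\M\times\M$; the composite with $\Phi$ is then $a\mapsto(a,0)$, and one checks (for instance by the cofiber-of-the-diagonal computation in the proof of Lemma~\ref{lem:addeqs}) that this particular $\M\to\hocolim_{\Q\mathbf1}G\M(\mathbf1,-)$ is a stable equivalence. After these repairs your two-out-of-three argument does go through, giving an alternative to the paper's citation-based proof.
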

\begin{proof}
This is entirely due to Thomason.  Theorem~5.2 in \cite{Th3} asserts
that there is an unstable equivalence from $\hocolim_{S\in\Q\mathbf1}
G\M(\mathbf1,S)$ to the ``simplified double mapping cylinder'', and his
argument on pp.~1657--1658 exhibits an unstable equivalence from the
simplified double mapping cylinder to $(-\M)\M$.
\end{proof}

\begin{rem}
The unstable equivalence $\hocolim_{S\in\Q\mathbf1} G\M(\mathbf1,S)
\we (-\M)\M$ is the additive extension of the assignment that sends
$1[\{-1\},0]$ and $1[\varnothing,a]$ to $(0,0)\in(-\M)\M$, and
$1[\{1\},(a,b)]$ to $(a,b)$.  The map on morphisms is straightforward,
once one declares that the morphism $1[\varnothing,a] \to 1[\{1\},
(a,a)]$ is sent to $[\id_a,a]\: (0,0) \to (a,a) \in (-\M)\M$.
\end{rem}

Collecting Proposition~\ref{prop:GQisIgrded}, Lemma~\ref{lem:zeroless}
and Lemma~\ref{lem:addeqs}, we obtain our zeroless ring completion.

\begin{cor}
Let $\R$ be a \bp (resp.~a \sbc).  The canonical lax morphism
$$
\R \lra \hocolim_{I\smallint\Q}G\R
$$
is a stable equivalence of zeroless \bps (resp.~zeroless \sbcs), and
$$
\hocolim_{S\in\Q\mathbf1}G\R(\mathbf1,S) \we \hocolim_{I\smallint\Q}G\R
$$
is an unstable equivalence of $\R$-modules.
\end{cor}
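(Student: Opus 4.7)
The plan is simply to assemble the three cited results. First note that the zero of $J = I\smallint\Q$ is $(\mathbf 0, \mathbf 0)$ and that $G\R(\mathbf 0, \mathbf 0) = \R^{\P \mathbf 0} = \R$, since $\P \mathbf 0 = \{\varnothing\}$ has a single element. By Proposition~\ref{prop:GQisIgrded}, the functor $G\R \: I\smallint\Q \to \strict$ is an $I\smallint\Q$-graded \bp (resp.~\sbc).

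Applying Lemma~\ref{lem:zeroless} with $J = I\smallint\Q$ and $\C^\bullet = G\R$ then yields that $\hocolim_{I\smallint\Q} G\R$ is a zeroless \bp (resp.~\sbc), that the canonical functor $\R \to \hocolim_{I\smallint\Q} G\R$ is a lax morphism of zeroless \bps (resp.~\sbcs), and that for each $x \in I\smallint\Q$ the functor $G\R(x) \to \hocolim_{I\smallint\Q} G\R$ is a map of $\R$-modules. Applying Lemma~\ref{lem:addeqs} to the underlying permutative category of $\R$ gives that the first map is a stable equivalence (part~(1)) and that the second map $\hocolim_{T \in \Q\mathbf 1} G\R(\mathbf 1, T) \to \hocolim_{I\smallint\Q} G\R$ is an unstable equivalence (part~(3)).

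The only item not immediate from the citations is that the second equivalence is a map of $\R$-modules. The source carries an $\R$-module structure via the graded multiplication $\otimes \: G\R(\mathbf 0, \mathbf 0) \times G\R(\mathbf 1, T) \to G\R(\mathbf 1, T)$ of Definition~\ref{def:igradedbim}, whose naturality in $T \in \Q\mathbf 1$ (axiom~\eqref{igradedtensor}) allows the pairing to pass to the homotopy colimit over $T$. Since the canonical map factors objectwise through the $\R$-module maps $G\R(\mathbf 1, T) \to \hocolim_{I\smallint\Q} G\R$ supplied by the last clause of Lemma~\ref{lem:zeroless}, it is itself a map of $\R$-modules. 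No step presents a serious obstacle; the corollary is essentially a bookkeeping exercise, with the mild subtlety being the recognition that the $\R$-module structure on the source of the second equivalence is well defined and visibly compatible with the one on the target.
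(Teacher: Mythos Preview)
Your proposal is correct and follows exactly the paper's approach: the paper simply states that the corollary is obtained by ``collecting Proposition~\ref{prop:GQisIgrded}, Lemma~\ref{lem:zeroless} and Lemma~\ref{lem:addeqs}'', without further elaboration. If anything, your write-up is more thorough, since you spell out why the second map is an $\R$-module map, a point the paper leaves entirely implicit.
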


Using Proposition~\ref{prop:hocolim of rigs} to add zeros, and tracing
the action of $Z\R$, we have the main result:

\begin{thm} \label{thm:gcp}
If $\R$ is a commutative $\bc$ (resp.~a \bc), then
$$
\bar{\R}=\Dhocolim_{I\smallint\Q}G\R
$$
is a simplicial commutative ring category (resp.~a simplicial
ring category).  Here $G\R$ is the $I\smallint\Q$-graded \bp
(resp.~$I\smallint\Q$-graded strictly bimonoidal category) of
Proposition~\ref{prop:GQisIgrded} applied to the bipermutative category
(resp.~strictly bimonoidal category) associated with $\R$.

The simplicial rig maps of Proposition~\ref{prop:hocolim of rigs}
$$
\R \ew Z\R \lra \bar{\R}
$$
are stable equivalences of $Z\R$-modules.  Furthermore, if $\R$ is a
groupoid with faithful additive translation, then the maps
$$
(-\R)\R \ew Z(-\R)\R \ew Z\hocolim_{S\in\Q\mathbf1} G\R(\mathbf1,S)
	\we \bar{\R}
$$
form a chain of unstable equivalences of $Z\R$-modules.
\end{thm}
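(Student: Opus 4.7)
The plan is to assemble the preceding sections. First I would invoke Proposition~\ref{prop:GQisIgrded} to recall that $G\R$ is an $I\smallint\Q$-graded bipermutative (resp.~strictly bimonoidal) category, so that Proposition~\ref{prop:hocolim of rigs} directly yields the structure of simplicial bipermutative (resp.~strictly bimonoidal) category on $\bar\R=\Dhocolim_{I\smallint\Q}G\R$ together with the lax morphisms $\R \ew Z\R \lra \bar\R$ and the $Z\R$-module structures on all three objects. This is essentially bookkeeping once the preceding two propositions are in place.

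Next I would verify that $\bar\R$ is actually a simplicial \emph{ring} category, i.e., that $\pi_0|\bar\R|$ has additive inverses. The zeroless corollary just before the theorem produces a stable equivalence $\R \lra \hocolim_{I\smallint\Q}G\R$, and Lemma~\ref{lem:addeqs}(2) guarantees the target is group complete. To compare this with the derived, zeroful construction, Lemma~\ref{lem:epsunstable} says the resolution $Z$ is a degreewise unstable equivalence and Lemma~\ref{lem:dhocolim_is_derived} shows $\Dhocolim$ preserves stable equivalences, so $\pi_0|\bar\R|$ inherits the group structure from $\pi_0|\hocolim_{I\smallint\Q}G\R|$.

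The stable equivalence statement $Z\R \lra \bar\R$ of $Z\R$-modules is obtained in the same spirit. Simplicial degree by simplicial degree, $Z_q\R=(LR)^{q+1}\R$ maps into $\hocolimiz_{I\smallint\Q}Z_q G\R$, and the zeroless corollary together with the fact (used in the proof of Lemma~\ref{lem:dhocolim_is_derived}) that $LR$ preserves stable equivalences show that each such map is a stable equivalence. This persists after realization in the simplicial direction, and the equivalence $\R \ew Z\R$ is exactly Lemma~\ref{lem:epsunstable}.

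For the groupoid part of the theorem I would combine Lemma~\ref{lem:compGQ} (which under the hypotheses provides an unstable equivalence $\hocolim_{S\in\Q\mathbf1}G\R(\mathbf1,S) \we (-\R)\R$) with Lemma~\ref{lem:addeqs}(3) (providing $\hocolim_{S\in\Q\mathbf1}G\R(\mathbf1,S) \we \hocolim_{I\smallint\Q}G\R$), and then pull both through the $Z$-resolution via Lemmas~\ref{lem:epsunstable} and~\ref{lem:dhocolim_is_derived} to produce the displayed zig-zag of unstable equivalences. The main obstacle I anticipate is the bookkeeping required to verify that each map in this chain is genuinely a map of $Z\R$-modules: the comparison maps of Lemmas~\ref{lem:addeqs}(3) and~\ref{lem:compGQ}, together with Thomason's simplified double mapping cylinder identification, must all be checked to be compatible with the multiplicative action of $Z\R$. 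This ultimately reduces to the module-map statement of Proposition~\ref{prop:hocolim of rigs} together with the naturality of $G$, $Z$, $L$, and $\hocolim$.
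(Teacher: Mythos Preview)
Your proposal is correct and matches the paper's own treatment: the paper does not give a separate proof of this theorem but introduces it with the sentence ``Using Proposition~\ref{prop:hocolim of rigs} to add zeros, and tracing the action of $Z\R$, we have the main result'', i.e., it is presented as the immediate assembly of Proposition~\ref{prop:GQisIgrded}, the zeroless Corollary (which packages Lemmas~\ref{lem:zeroless} and~\ref{lem:addeqs}), Lemma~\ref{lem:compGQ}, and Proposition~\ref{prop:hocolim of rigs}. Your write-up is simply a more explicit unpacking of that same assembly, including the honest acknowledgment that the $Z\R$-module compatibility in the final chain requires some bookkeeping the paper leaves implicit.
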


\begin{bibdiv}
\begin{biblist}

\bib{A}{article}{
   author={Ausoni, Christian},
   title={On the algebraic $K$-theory of the complex $K$-theory spectrum},
   journal={Invent. Math.},
   volume={180},
   date={2010},
   number={3},
   pages={611--668},
}

\bib{AR}{article}{
   author={Ausoni, Christian},
   author={Rognes, John},
   title={Algebraic $K$-theory of topological $K$-theory},
   journal={Acta Math.},
   volume={188},
   date={2002},
   number={1},
   pages={1--39},
}

\bib{BDR}{article}{
   author={Baas, Nils A.},
   author={Dundas, Bj{\o}rn Ian},
   author={Rognes, John},
   title={Two-vector bundles and forms of elliptic cohomology},
   conference={
      title={Topology, geometry and quantum field theory},
   },
   book={
      series={London Math. Soc. Lecture Note Ser.},
      volume={308},
      publisher={Cambridge Univ. Press},
      place={Cambridge},
   },
   date={2004},
   pages={18--45},
}

\bib{BDRR}{article}{ 
   author={Nils A. Baas},
   author={Bj{\o}rn Ian Dundas},
   author={Birgit Richter},
   author={John Rognes},
   title={Two-vector bundles define a form of elliptic cohomology},
   note={preprint arXiv:0706.0531v2},
}  

\bib{BDRR2}{article}{ 
   author={Nils A. Baas},
   author={Bj{\o}rn Ian Dundas},
   author={Birgit Richter},
   author={John Rognes},
   title={Stable bundles over rig categories},
   note={preprint arXiv:0909.1742},
}  

\bib{Bo}{article}{ 
   author={Marcel B{\"o}kstedt},
   title={Topological Hochschild homology},
   note={Bielefeld preprint},
   date={ca.~1986},
}  

\bib{EM}{article}{
   author={Elmendorf, A. D.},
   author={Mandell, M. A.},
   title={Rings, modules, and algebras in infinite loop space theory},
   journal={Adv. Math.},
   volume={205},
   date={2006},
   number={1},
   pages={163--228},
}

\bib{GQ}{article}{
   author={Grayson, Daniel},
   title={Higher algebraic $K$-theory. II (after Daniel Quillen)},
   conference={
      title={Algebraic $K$-theory (Proc. Conf., Northwestern Univ.,
      Evanston, Ill., 1976)},
   },
   book={
      publisher={Springer},
      place={Berlin},
   },
   date={1976},
   pages={217--240. Lecture Notes in Math., Vol. 551},
}

\bib{KV}{article}{
   author={Kapranov, M. M.},
   author={Voevodsky, V. A.},
   title={$2$-categories and Zamolodchikov tetrahedra equations},
   conference={
      title={ infinite-dimensional methods},
      address={University Park, PA},
      date={1991},
   },
   book={
      series={Proc. Sympos. Pure Math.},
      volume={56},
      publisher={Amer. Math. Soc.},
      place={Providence, RI},
   },
   date={1994},
   pages={177--259},
}

\bib{L}{article}{
   author={Laplaza, Miguel L.},
   title={Coherence for distributivity},
   conference={
      title={Coherence in categories},
   },
   book={
      publisher={Springer},
      place={Berlin},
   },
   date={1972},
   pages={29--65. Lecture Notes in Math., Vol. 281},
}

\bib{ML}{book}{
   author={Mac Lane, Saunders},
   title={Categories for the working mathematician},
   series={Graduate Texts in Mathematics},
   volume={5},
   edition={2},
   publisher={Springer-Verlag},
   place={New York},
   date={1998},
   pages={xii+314},
}

\bib{M1}{article}{
   author={May, J. P.},
   title={$E_{\infty }$ spaces, group completions, and permutative
   categories},
   conference={
      title={New developments in topology (Proc. Sympos. Algebraic Topology,
      Oxford, 1972)},
   },
   book={
      publisher={Cambridge Univ. Press},
      place={London},
   },
   date={1974},
   pages={61--93. London Math. Soc. Lecture Note Ser., No. 11},
}

\bib{M2}{book}{
   author={May, J. Peter},
   title={$E_{\infty }$ ring spaces and $E_{\infty }$ ring spectra},
   series={Lecture Notes in Mathematics, Vol. 577},
   note={With contributions by Frank Quinn, Nigel Ray, and J\o rgen
   Tornehave},
   publisher={Springer-Verlag},
   place={Berlin},
   date={1977},
   pages={268},
}

\bib{M4}{article}{
   author={May, J. P.},
   title={Pairings of categories and spectra},
   journal={J. Pure Appl. Algebra},
   volume={19},
   date={1980},
   pages={299--346},
}

\bib{Sch}{article}{
   author={Schlichtkrull, Christian},
   title={The cyclotomic trace for symmetric ring spectra},
   conference={
      title={New topological contexts for Galois theory and algebraic
      geometry (BIRS 2008)},
   },
   book={
      series={Geom. Topol. Monogr.},
      volume={16},
      publisher={Geom. Topol. Publ., Coventry},
   },
   date={2009},
   pages={545--592},
}

\bib{Th1}{article}{
   author={Thomason, R. W.},
   title={Homotopy colimits in the category of small categories},
   journal={Math. Proc. Cambridge Philos. Soc.},
   volume={85},
   date={1979},
   number={1},
   pages={91--109},
}

\bib{Th2}{article}{
   author={Thomason, R. W.},
   title={Beware the phony multiplication on Quillen's ${\scr A}^{-1}{\scr
   A}$},
   journal={Proc. Amer. Math. Soc.},
   volume={80},
   date={1980},
   number={4},
   pages={569--573},
}

\bib{Th3}{article}{
   author={Thomason, Robert W.},
   title={First quadrant spectral sequences in algebraic $K$-theory via
   homotopy colimits},
   journal={Comm. Algebra},
   volume={10},
   date={1982},
   number={15},
   pages={1589--1668},
}

\end{biblist}
\end{bibdiv}

\end{document}